\documentclass[11pt]{article}

\usepackage{amssymb,amsmath,amsthm,latexsym}

\usepackage{graphicx}
\usepackage{overpic}
\usepackage{tikz}

\usepackage{comment}

\DeclareMathOperator{\dist}{dist}

\DeclareMathOperator{\Bal}{Bal}
\DeclareMathOperator{\supp}{supp}

\DeclareMathOperator{\BIS}{\textbf{BIS}}
\DeclareMathOperator{\FIS}{\textbf{FIS}}
\DeclareMathOperator{\TIS}{\textbf{TIS}}
\DeclareMathOperator{\UIS}{\textbf{UIS}}

\let\Im\undefined

\DeclareMathOperator{\Im}{Im}

\newcommand{\ds}{\displaystyle}

\newtheorem{theorem}{Theorem}[section]
\newtheorem{lemma}[theorem]{Lemma}
\newtheorem{proposition}[theorem]{Proposition}

\theoremstyle{definition}
\newtheorem{definition}[theorem]{Definition}
\newtheorem{remark}[theorem]{Remark}

\numberwithin{equation}{section}

\title{A vector equilibrium problem for symmetrically
	located point charges on a sphere}
\author{Juan G. Criado del Rey 
	\footnote{Department of Mathematics, Katholieke Universiteit Leuven, Belgium, Email: juan.gcriadodelrey@kuleuven.be. 
		Supported by FWO Flanders project EOS 30889451.}	 
		 \and  Arno B.J. Kuijlaars
\footnote{Department of Mathematics, Katholieke Universiteit Leuven, Belgium, Email: arno.kuijlaars@kuleuven.be. Supported by long term structural funding-Methusalem grant of the Flemish
	Government, and by FWO Flanders projects
	 EOS 30889451, G.0864.16 and 
	 G.0910.20.}	  
}

\date{}

\begin{document}

\maketitle

\begin{abstract}
	We study the equilibrium measure on the  two dimensional
	sphere in the presence of an external field generated
	by $r+1$ equal point charges that are symmetrically
	located around the north pole. The support of 
	the equilibrium measure is known as the droplet.
	The droplet has a motherbody which we 
	characterize by means of a vector equilibrium
	problem (VEP) for $r$ measures in the complex plane. 
	
	The model undergoes two transitions which is
	reflected in the support of the first component
	of the minimizer of the VEP, namely the support
	can be a finite interval containing $0$, the
	union of two intervals, or the full half-line.
	The two interval case corresponds to 
	a droplet with two disjoint components, and
	it is analyzed by means of a genus one Riemann surface.
	\end{abstract}

\section{Introduction} \label{sec1}

\subsection{Equilibrium on the sphere} \label{subsec11}

This paper deals with an electrostatic equilibrium problem 
for free charges on the unit sphere $\mathbb S^2 \subset
\mathbb R^3$ with logarithmic interaction
under the influence of a finite number of fixed point charges 
\cite{BrDrSaWo18, CrKu19+, Dr02,  LeDr19+}.
Suppose there are $r+1$ fixed charges at points $p_0, \ldots, p_r$ on $\mathbb S^2$,
and each $p_j$ carries a charge $a_j > 0$, leading to
a charge distribution
\begin{equation} \label{eq:sigma} 
\sigma = \sum_{j=0}^r a_j \delta_{p_j}. \end{equation}
Then there exists an equilibrium measure $\mu_{\sigma}$ in 
the presence of the fixed charges 
that is the unique probability measure on $\mathbb S^2$
that satisfies for some constant $\ell$,
\begin{equation} \label{eq:musigma1}
\begin{aligned} 
	U^{\mu_{\sigma}} + U^{\sigma}
	& = \ell, \quad \text{ on } D_{\sigma} = \supp(\mu_{\sigma}), \\
	U^{\mu_{\sigma}} + U^{\sigma}
	& \geq \ell, \quad  \text{ on } \mathbb S^2,
 \end{aligned} \end{equation}
where we use
\[ U^{\mu}(x) = \int \log \frac{1}{\|x-y\|} d\mu(y) \]
to denote the logarithmic potential of a measure $\mu$.
The domain $D_{\sigma}$ is known as the droplet, and it determines the measure $\mu_{\sigma}$ since 
\begin{equation} \label{eq:musigma} 
	\mu_{\sigma} =  (\lambda(D_{\sigma}))^{-1} \lambda_{D_{\sigma}} 
	\end{equation}
where $\lambda_D$ denotes the restriction to $D$ of the 
normalized Lebesgue measure $\lambda$ on the sphere. 
It is known that
\begin{equation} \label{eq:lambdaD}
	\lambda(D_{\sigma}) = \frac{1}{1+\sigma(\mathbb S^2)} 
	= \frac{1}{1+ \sum_{j=0}^r a_j}. \end{equation}
	see e.g.\ \cite[Appendix A]{CrKu19+}.

A motherbody (or a potential theoretic skeleton \cite{GuTeVa14})
for $D_{\sigma}$ is a probability measure $\sigma^*$
supported on a one-dimensional subset of $\mathbb S^2$ 
(i.e., a curve, or a system of curves) such that for  some constant $\ell^*$,
\begin{equation} \label{eq:musigma2}
\begin{aligned}
U^{\sigma^*} & =  U^{\mu_{\sigma}} + \ell^*,  \quad
\text{ on } \mathbb S^2 \setminus D_{\sigma}, \\
U^{\sigma^*} & \geq U^{\mu_{\sigma}} + \ell^*, \quad \text{ on } 
\mathbb S^2. 
\end{aligned}
\end{equation}  
Motherbodies are connected to a variety of topics
in applied complex analysis, such as quadrature domains
and Schwarz functions
\cite{AhSh76, Cr05, GuSh05, LeMa16}, 
partial balayage and Hele-Shaw flows \cite{Gu02},
orthogonal 
polynomials in the complex plane \cite{BaBeLeMc15, BlKu12, LeWa17}
and  normal matrix models \cite{BlSi20, TeBeAgZaWi05}.

The aim of this paper is to construct such a motherbody 
by means of a vector equilibrium problem in the
special situation where the points are
in a symmetric position around a distinguished point
on the unit sphere, that without loss of generality
we can take as the north pole. More
precisely, we assume that the distance to
the north pole is the same for each point $p_j$, 
which means that the points are on a circle of constant latitude.
On this circle the points are evenly distributed, like
vertices of a regular $r+1$-gon.
We also assume 
\[ a_j = a, \qquad \text{for } j=0,\ldots, r. \]
In this situation we are able to compute
the motherbody, which, because of rotational symmetry,
is supported on $r+1$ meridians 
(lines of constant longitude) that connect
the north and south poles. From the motherbody we go 
on to construct the droplet $D_{\sigma}$.

With fixed points $p_0, \ldots, p_r$, 
the droplet and the support of the motherbody
decrease as we increase $a$.
We find three possible situations and the transitions 
between them. 
\begin{itemize}
	\item For small $a > 0$, the droplet is big and
	the complement $\mathbb S^2 \setminus D_{\sigma}$ consists
	of $r+1$ disjoint spherical caps, one around each
	of the points $p_j$. The motherbody is supported
	on the full meridians with a positive
	density. 
	
	\item For a first critical value $a_{1,cr}$, the
	spherical caps are tangent to each other. 
	 The density of the motherbody
	becomes zero at the points of tangency.
	
	\item
	For $a > a_{1,cr}$ the droplet is no longer 
	the complement
	of disjoint spherical caps.
	 For $a$ somewhat 
	larger than $a_{1,cr}$ the droplet will have
	two connected components (provided $r \geq 2$), one containing
	the north pole and the other one the south pole.
	The motherbody is not fully supported anymore.
	On each meridian the  support has two parts, one
	with the north pole and one with the south pole. 
	
	\item For a second critical value $a_{2,cr}$ one of
	the components disappers. If the points $p_j$ are in
	the northern hemisphere, 
	then the component containing the north pole disappears.
	Also the parts of the motherbody containing the
	north pole have disappeared at the second critical value.
	 
	\item For larger $a >  a_{2,cr}$
	the droplet $D_{\sigma}$ is simply connected containing  
	the south pole
	(assuming again that the points $p_j$
	are in the northern hemisphere). The support of the 
	motherbody consists of $r+1$ segments containing
	the south pole, one segment along each meridian.
	
	\item As $a \to \infty$, the droplet and the support
	of the motherbody further shrink to
	the south pole. 
	 \end{itemize}

\subsection{The case $r=1$}

\begin{figure}[t]
	\centering
	\includegraphics[width=0.52\textwidth]{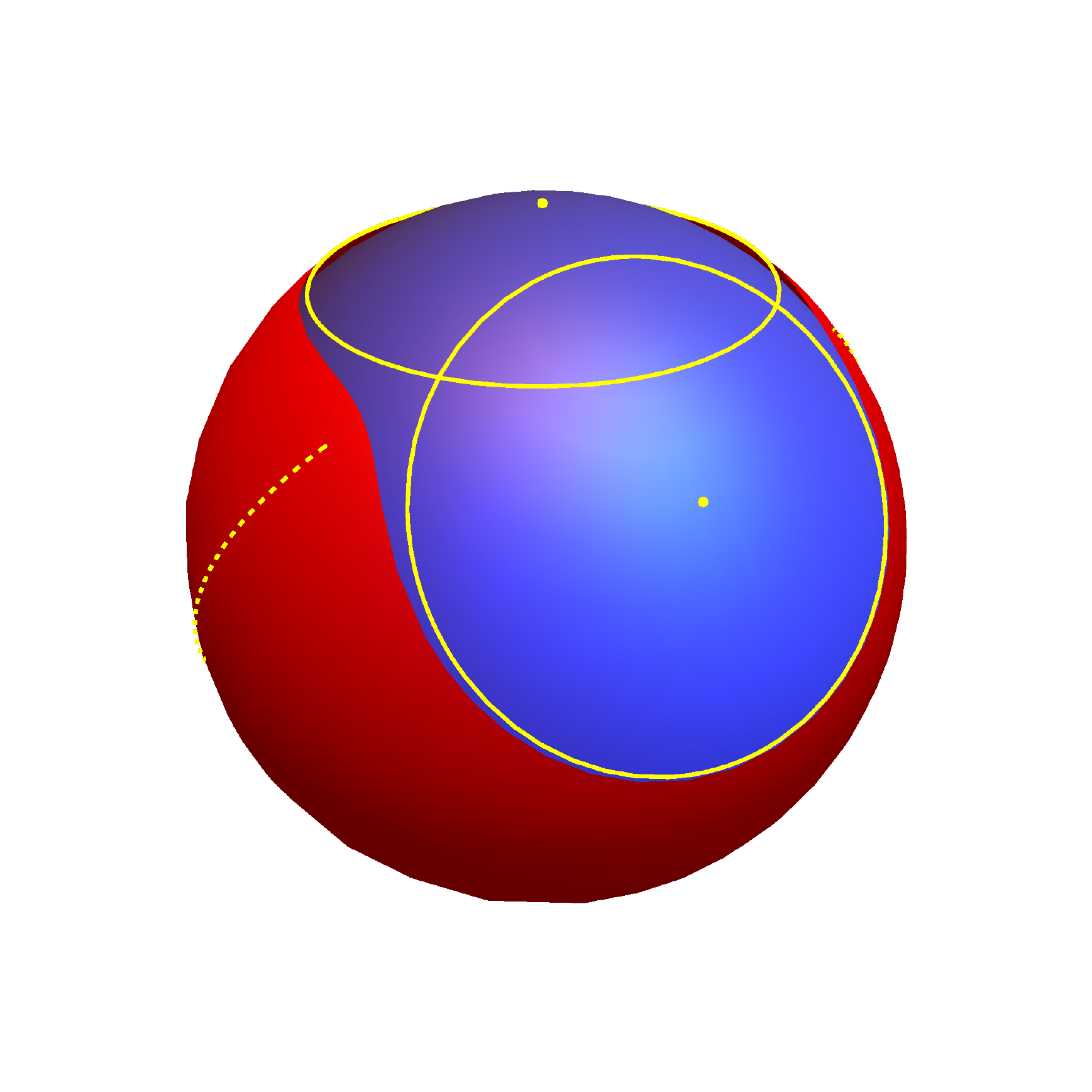}\hspace{-1cm}
	\includegraphics[width=0.52\textwidth]{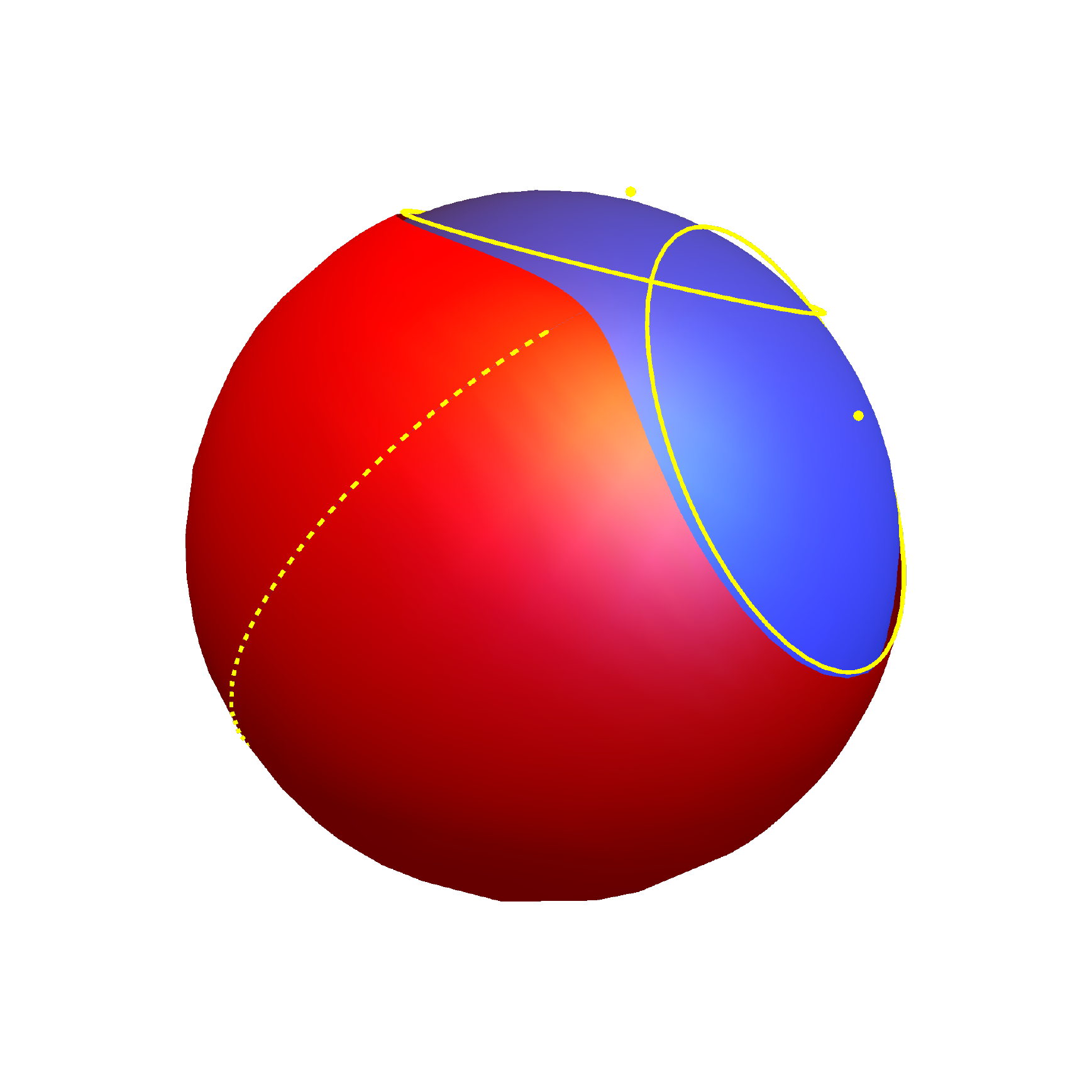}
	\caption{Picture of the droplet (red region)
	in case $r=1$ and $a > a_{1,cr}$. The 
		spherical caps centered at $p_0$ and $p_1$ with geodesic radii $a/(1+2a)$ are also represented, as well as the support of the motherbody (dashed line inside the droplet). The boundary of the droplet is mapped
		by stereographic projection onto an
		ellipse in the complex plane \cite{CrKu19+}. \label{figure1}}
\end{figure}

For $r=1$ the two spherical caps are tangent at the north pole 
at the critical value $a_{1,cr}$. Then there is no second
critical $a$-value since for each $a > a_{1,cr}$ the droplet
is simply connected. The support of the motherbody is 
an interval along the big circle that separates the
two points $p_0$ and $p_1$. 
See Figure~\ref{figure1} that is taken from \cite{CrKu19+} 
and compare also with  \cite[Figure~4]{BrDrSaWo18}.
This situation was analyzed
in \cite{CrKu19+} and it was shown that the boundary
of the droplet is mapped by stereographic projection to an ellipse in the complex plane.
This fact can  also be deduced from earlier work 
by Gustafsson and Tkachev in \cite[Example 3]{GuTk11}.

The approach of \cite{CrKu19+} is to first characterize
the motherbody by means of an equilibrium problem from
logarithmic potential theory \cite{Ra95, SaTo97}. This
equilibrium problems asks for the minimizer of
\begin{equation} \label{eq:VEPforris1} 
\iint \log \frac{1}{|x-y|}d\mu(x)d\mu(y) 
+ 2\int V(x) d\mu(x) \end{equation}
among probability measures $\mu$ on $\mathbb R$, with
\begin{equation} \label{eq:Vforris1} V(x) = \frac{1+a}{2} \log(x^2+b^{-2}) - \frac{a}{2} \log(x^2+b^2),
\end{equation}
where $\pm ib$, $b > 1$, are the images of the two
points $p_0$, $p_1$ under stereographic projection
onto the complex plane. The minimizer
is calculated explicitly in \cite[Theorem 1.6]{CrKu19+},
see also \cite{OrSaWi19}. The external field is only
weakly admissible \cite{HaKu12, Si05} and for a fixed
$b >1$ there is a critical value $a_{cr}$ such that
the minimizer $\mu_V$ is compactly supported if and only if
$a > a_{cr}$. Out of the Stieltjes transform of $\mu_V$
a meromorphic function $S$ is then constructed  that is shown
to be the spherical Schwarz function of a certain domain
$\Omega$ in the sense that its boundary
is characterized by
\[ \partial \Omega:  \quad S(z) = \frac{\bar{z}}{1+|z|^2}. \]
After pulling back to the sphere with inverse 
stereographic projection, the domain $\Omega$
is then proved to give the droplet $D_{\sigma}$ and $\mu_V$ gives the motherbody. 

\subsection{Stereographic projection and removal of
symmetry}

In this paper we extend the approach of \cite{CrKu19+} 
to $r+1$ points on the sphere. 
As in \cite{CrKu19+} we project onto the complex plane where we do all calculations. Instead of
the equilibrium problem \eqref{eq:VEPforris1}, \eqref{eq:Vforris1} we study 
a vector equilibrium problem for a vector of $r$
measures. This will be described in section \ref{subsec21} below.
In this section we first describe what we aim
to achieve in the complex plane.

We move from the sphere to the complex
plane by stereographic projection, where the south pole
is mapped to $0$ and the north pole to $\infty$. The points
$p_0, \ldots, p_r$ are projected to $r+1$ points with absolute
value $q^{-\frac{1}{r+1}}$ for some number $q > 0$. 
The projected points will be the solutions of the equation $z^{r+1} + q = 0$,
namely  
\begin{equation} \label{eq:qandpj} 
	p_j \mapsto   q^{-\frac{1}{r+1}}  e^{i \theta_j},
	\qquad \theta_j = \frac{\pi}{r+1} + \frac{2 j \pi}{r+1},
	\qquad \text{for } j = 0,1, \ldots, r.
	\end{equation}
The case $q < 1$ corresponds to points $p_j$ in 
the northern hemisphere, and $q > 1$ to points in 
the southern hemisphere.

The angles $\theta_j$ are chosen in such a way that the meridians separating
the points $p_0, \ldots, p_r$ at equal distances are mapped to
the $r+1$-star
\[ \{ z \in \mathbb C \mid z^{r+1} \in [0,\infty) \}. \]
The droplet $D_{\sigma}$ is mapped to a domain $\Omega \subset \mathbb C \cup \{\infty\}$, 
and $\mu_{\sigma}$ is mapped to its pushforward on $\Omega$
which takes the form 
\begin{equation} \label{eq:dmuOmega} d \mu_{\Omega}(z)   = \left.  \frac{dA(z)}{t \pi (1+|z|^2)^2} \right|_{\Omega} 
\end{equation}
where  $dA(z)$ is the planar Lebesgue measure on $\mathbb C$.
and
 \begin{equation} \label{eq:tina} 
t = \frac{1}{1+\sigma(\mathbb S^2)} 
	= \frac{1}{1+(r+1)a}  \end{equation}

The properties \eqref{eq:musigma1} translate into
\begin{equation} \label{eq:muOmega1} 
	U^{\mu_{\Omega}}(z)  + a \log \frac{1}{|z^{r+1}+q^{-1}|} +
	\frac{1+(r+1)a}{2}  \log \left(1+ |z|^2\right)    
	\begin{cases} = c_1, & \quad  z \in \Omega, \\
	\geq c_1, & \quad z \in \mathbb C. 
	\end{cases} \end{equation}
for some constant $c_1$.

The motherbody $\sigma^*$ (that we are looking
for in this paper and whose existence we do not a priori
assume) satisfying
\eqref{eq:musigma2} corresponds to a probability
measure $\mu^*$ on $\{z \mid z^{r+1} \in [0,\infty) \}$
with the property that
\begin{equation} \label{eq:muOmega2}
\begin{aligned} 
	U^{\mu^*} & = U^{\mu_{\Omega}} + c_2, \quad 
	\text{ on } \mathbb C \setminus \Omega, \\
	U^{\mu^*} & \geq U^{\mu_{\Omega}} + c_2, \quad	
	\text{ on } \mathbb C,
	\end{aligned}
\end{equation} 
for some other constant $c_2$.
The aim of the paper is to construct the domain $\Omega$
and measures
$\mu^*$ and $\mu_{\Omega}$ satisfying the conditions
\eqref{eq:muOmega1} and \eqref{eq:muOmega2}.

The probability measures $\mu^*$ and $\mu_{\Omega}$ 
will be invariant under rotations around the origin 
over angle $\frac{2\pi}{r+1}$. For our computations
it will be convenient to remove the rotational symmetry, and 
change variables $z \mapsto z^{r+1}$.
Then $\mu^*$ will correspond to a probability measure $\mu_1$
on $[0,\infty)$, and $\mu_{\Omega}$ to a probability measure $\mu_U$ on the set
\begin{equation} \label{eq:Udef} 
	U = \{ z^{r+1} \mid z \in \Omega \} \end{equation}
and $\mu_U$ takes the form 
\begin{equation} \label{eq:dmuU} 
	d \mu_U(z) = \left. \frac{1}{(r+1)t \pi} \frac{dA(z)}{
|z|^{\frac{2r}{r+1}} \left(1+ |z|^\frac{2}{r+1}\right)^2} 
	\right|_{U}. 
	\end{equation}
which comes from applying the change of variables to
\eqref{eq:dmuOmega}
		
Our approach will be to construct $\mu_1$ first as
the first component of the minimizer of a vector
equilibrium problem (VEP) for $r$ measures. Besides $\mu_1$
there will be further measures $\mu_2,\ldots, \mu_{r}$
that play auxiliary roles. They do not have a direct
interpretation for the problem at hand, though.

In the next section we will state the VEP without trying to motivate the form
that it takes. It is actually by no means obvious that this
VEP is relevant for our problem, and it will be 
our main result that $\mu_1$ after
symmetrization gives indeed a measure $\mu^*$ that can be
identified as the
image of the motherbody under stereographic projection.
However, for $r=1$, the VEP is an equilibrium
problem for one measure that, after symmetrization,
can be identified with \eqref{eq:Vforris1}. 

The VEP gives rise to an algebraic structure
and this will allow us to find a domain
$U$ with a measure \eqref{eq:dmuU}. Through \eqref{eq:Udef}
we find a domain $\Omega$ with rotational symmetry and the
measure $\mu_{\Omega}$ as in \eqref{eq:dmuOmega}. 
We prove that it has the properties \eqref{eq:muOmega1}
and \eqref{eq:muOmega2}.

The VEP depends on two parameters
$q > 0$ and $t \in (0,1)$, that ultimately will
play the roles of the parameters appearing in \eqref{eq:qandpj} and \eqref{eq:tina}, as we will show
in the end.

\section{Statement of results} \label{sec2}
\subsection{Vector equilibrium problem} \label{subsec21}

Let $r \geq 2$ be an integer, and let $q > 0$, $0 < t < 1$
be real parameters.
Our starting point is a vector equilibrium problem 
that asks to  minimize the energy functional
\begin{multline} \label{eq:VEPF}
\mathcal E(\mu_1, \mu_2, \ldots, \mu_r) =  
\sum_{j=1}^{r} I(\mu_j) - \sum_{j=1}^{r-1}
	I(\mu_j,\mu_{j+1}) \\
	+ \frac{1-t}{t} I\left(\mu_1, \delta_{-q^{-1}}\right)
		- \frac{r+t}{t} I\left(\mu_r, \delta_{(-1)^r q}\right), 
		\end{multline}
depending on $r$ measures. Here $\delta_{-q^{-1}}$ and $\delta_{(-1)^r q}$ denote Dirac point masses. As usual we write
\[ I(\mu,\nu) =
	\int U^{\mu} d\nu = \iint \log \frac{1}{|x-y|} d\mu(x) d\nu(y) \] 
for the mutual logarithmic energy of $\mu$ an $\nu$,
and $I(\mu) = I(\mu,\mu)$ for the logarithmic energy of $\mu$.

Our aim is to minimize \eqref{eq:VEPF} over a
vector of measures satisfying certain conditions. We emphasize that a measure (without any adjective) 
will always refer to a positive measure. We also encounter
negative measures or signed measures in this paper, 
but in such a context the adjective will always be mentioned.

\begin{definition} \label{def21}
	The vector equilibrium problem (VEP)
	asks to minimize the energy functional \eqref{eq:VEPF} over 
vectors $(\mu_1, \ldots, \mu_r)$  of measures subject to the conditions
\begin{itemize} 
	\item[(a)] $\supp(\mu_j) \subset \Delta_j$ for every $j$,
	where  
	\begin{equation} \label{eq:Deltaj}
	\Delta_j = \begin{cases} [0,\infty), & \text{ if $j$ is odd},  \\
	(-\infty,0], & \text{ if $j$ is even},
	\end{cases}
	\end{equation}
	\item[(b)] the total mass of $\mu_j$ is 
	\begin{equation} \label{eq:massmuj}
	\mu_j(\Delta_j) = 1 + \frac{j-1}{t}, \quad
	\text{ for }  j =1, \ldots, r. 
	\end{equation}
\end{itemize}
\end{definition}

Throughout the paper we will write
\begin{equation}  \label{eq:mu0}
	\mu_0 = \left(1- \frac{1}{t} \right) \delta_{-q^{-1}}, 
	\qquad \mu_{r+1} = \left(1 + \frac{r}{t}\right) \delta_{(-1)^r q}.  
\end{equation} 
Then \eqref{eq:massmuj} is also satisfied for 
$j \in \{0, r+1\}$, but note that $\mu_0$ is
a negative measure (since $0 < t < 1$). Moreover,  \eqref{eq:VEPF} takes the compact form
\begin{equation} \label{eq:VEPF2} 
\mathcal E(\mu_1, \ldots, \mu_r) = 	\sum_{j=1}^{r} I(\mu_j) - \sum_{j=0}^{r}
	I(\mu_j,\mu_{j+1}), \end{equation}
that includes $\mu_0$ and $\mu_{r+1}$ as well, but $\mu_0$
and $\mu_{r+1}$  remain fixed in the VEP.

Vector equilibrium problems were first introduced
by Gonchar and Rakh\-manov
in their study of Hermite-Pad\'e approximation \cite{GoRa81,GoRa85}, see also \cite{NiSo91}.
They also appear in ensembles of random matrices that
are related to multiple orthogonal polynomials, see
\cite{ApKu11, Ku10} and references cited therein.

The  energy functional \eqref{eq:VEPF2} involves an attraction 
between neighboring measures that is of Nikishin type,
and this has appeared in a number of situations before.
What is special is that the total masses \eqref{eq:massmuj}
are in an arithmetic progression that is
increasing with steps $1/t$. 
It is more common that the masses are
in an arithmetic progression that decreases from $1$ to $0$ 
 see e.g.\ \cite{DuKu08} and the examples
 in \cite{ApKu11, Ku10}.

The VEP of Definition \ref{def21} is weakly admissible 
in the sense of \cite{HaKu12} as we show next. 

\begin{lemma} \label{lemma22}
	The vector equilibrium problem is weakly admissible.
	There is a unique minimizer, denoted $(\mu_1, \ldots, \mu_r)$. 
	The measures $\mu_2, \ldots, \mu_r$
	have full supports
\begin{equation} \label{eq:supp} 
	\supp(\mu_j) = \Delta_j = (-1)^{j-1} [0,\infty), 
	\quad \text{ for } j=2, \ldots, r. 
\end{equation} 
\end{lemma}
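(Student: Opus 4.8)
The plan is to establish the three assertions---weak admissibility, existence and uniqueness of the minimizer, and the full-support property---in that order, treating the last as the substantive point.

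\emph{Weak admissibility and well-posedness.} Following the framework of \cite{HaKu12}, I would first check that the interaction matrix of the quadratic part of $\mathcal E$ (the matrix with $1$'s on the diagonal and $-\tfrac12$ on the off-diagonal for neighbouring indices) is positive definite; this is the standard Nikishin-type matrix $2I - A$ where $A$ is the path-graph adjacency matrix, and its eigenvalues $2-2\cos\frac{k\pi}{r+1}$ are all positive. Hence the energy functional is strictly convex on the (affine) class of admissible vectors, which immediately gives uniqueness once a minimizer exists. For existence one uses weak admissibility: the external fields $\frac{1-t}{t}U^{\delta_{-q^{-1}}}$ acting on $\mu_1$ and $-\frac{r+t}{t}U^{\delta_{(-1)^rq}}$ acting on $\mu_r$ are harmonic away from the (fixed) points $-q^{-1}$ and $(-1)^rq$, which do not lie on the closed half-lines $\Delta_1,\Delta_r$; combined with the logarithmic growth of the potentials at infinity one shows the functional is bounded below and lower semicontinuous on vectors supported in the $\Delta_j$ with the prescribed masses, even though the $\Delta_j$ are unbounded. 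The point-mass terms are finite because the $\Delta_j$ stay away from $-q^{-1}$ and $(-1)^rq$. A standard compactness/vague-limit argument along a minimizing sequence then yields a minimizer; I would cite \cite{HaKu12} for the precise statement that covers this situation. The main technical care here is the behaviour at $\infty$: one must verify that mass cannot escape to infinity, which follows from the net logarithmic repulsion felt by each $\mu_j$ (the masses are large, the attractions from neighbours are comparatively weak), making the energy $+\infty$ in the limit if mass escapes.

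\emph{Full support of $\mu_2,\dots,\mu_r$.} This is the heart of the lemma. I would argue by the variational (Euler--Lagrange) inequalities: for each $j$ there is a constant $\ell_j$ with
\begin{equation}\label{eq:EL}
2U^{\mu_j}(x) - U^{\mu_{j-1}}(x) - U^{\mu_{j+1}}(x)
\begin{cases} = \ell_j, & x\in\supp(\mu_j),\\[2pt] \ge \ell_j, & x\in\Delta_j.\end{cases}
\end{equation}
Suppose some $\mu_j$ with $2\le j\le r$ does not have full support, so there is a bounded open subinterval $J\subset\Delta_j$ disjoint from $\supp(\mu_j)$ on which strict inequality $>\ell_j$ holds. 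Apply the distributional Laplacian (in two dimensions, $\Delta U^{\nu}=-2\pi\nu$) to the left-hand side of \eqref{eq:EL} across $J$: on $J$ one gets $2\pi(\mu_{j-1}+\mu_{j+1})$, a nonnegative measure, so the function $g_j(x):=2U^{\mu_j}(x)-U^{\mu_{j-1}}(x)-U^{\mu_{j+1}}(x)-\ell_j$ is superharmonic on $J$, vanishes at the endpoints of a maximal such gap, and is $\ge 0$ throughout; by the minimum principle it would have to be $\equiv 0$ on $J$ unless $\mu_{j-1}+\mu_{j+1}$ charges $J$. Thus the gap can persist only if a neighbour also has a gap there, and one propagates the argument to the neighbours. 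Because the masses are strictly increasing, $\mu_j(\Delta_j)=1+\frac{j-1}{t}$, and because the ``driving'' positive mass sits at the far end ($\mu_{r+1}$ has the largest mass), the chain of gaps cannot close off consistently: following the recursion toward $j=r$ one reaches $\mu_r$, whose right Euler--Lagrange relation involves the genuine point mass $\mu_{r+1}=(1+\frac r t)\delta_{(-1)^rq}$ producing a strictly subharmonic contribution near that point and forcing $\mu_r$ to extend all the way out along $\Delta_r$; similarly, balancing total masses forbids $\mu_j$ from being compactly supported for $2\le j\le r$. I would make this precise by an iteration on $j$ together with a mass-comparison: if $\supp(\mu_j)$ were contained in $[0,R]$ (say $j$ odd) then $U^{\mu_j}(x)\sim -(1+\frac{j-1}{t})\log|x|$ at $\infty$ while $U^{\mu_{j-1}},U^{\mu_{j+1}}$ grow like $-(1+\frac{j-2}{t})\log|x|$ and $-(1+\frac{j}{t})\log|x|$, so $g_j(x)\to -\frac{1}{t}\cdot(\text{sign})\log|x|\cdot(\dots)$; checking the sign shows $g_j\to+\infty$ is impossible to reconcile with $g_j\ge0$ and $g_j$ superharmonic on the unbounded gap, a contradiction.

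\emph{Expected main obstacle.} The delicate step is making the gap-propagation argument airtight when several consecutive measures could a priori share a common gap, and in handling the unbounded component of $\Delta_j$ near $\infty$: the minimum principle on an unbounded interval requires control of $g_j$ at infinity, which is exactly where the arithmetic-progression structure of the masses \eqref{eq:massmuj} enters decisively. I would isolate this as a short sublemma computing $\lim_{|x|\to\infty} g_j(x)/\log|x|$ from \eqref{eq:massmuj} and \eqref{eq:mu0}, showing it has the sign that rules out a neighbourhood of $\infty$ lying in a gap. Once the endpoints and the behaviour at $\infty$ are controlled, superharmonicity plus the minimum principle finishes it. I would not expect to need the algebraic/Riemann-surface machinery here; this lemma is purely potential-theoretic and should precede all of that.
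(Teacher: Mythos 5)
The weak-admissibility and uniqueness part of your proposal is essentially the paper's: you check that the Nikishin interaction matrix is positive definite and that the external fields are well behaved, then invoke \cite{HaKu12}. That part is fine. The full-support argument, however, is where your approach departs from the paper and, unfortunately, where it breaks down. Two of its key ingredients do not work in this geometry. First, the ``gap propagation'' step: you argue that a gap $J\subset\Delta_j$ can persist only if $\mu_{j-1}+\mu_{j+1}$ charges $J$, and then propagate. But $\mu_{j\pm1}$ are supported on $\Delta_{j\pm1}=(-1)^{j}[0,\infty)$, which is the \emph{opposite} half-line from $\Delta_j$; they never charge a gap in $\Delta_j$ (except possibly the single point $0$). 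So there is nothing to propagate, and your minimum-principle step simply yields $g_j\equiv\ell_j$ on $J$, which is not a contradiction. Second, the behaviour at infinity: you claim that if $\mu_j$ were compactly supported, comparing the logarithmic growth of $U^{\mu_j}$, $U^{\mu_{j-1}}$, $U^{\mu_{j+1}}$ would yield a contradiction. But the prescribed masses \eqref{eq:massmuj} are in an exact arithmetic progression with $2m_j=m_{j-1}+m_{j+1}$, so the leading $\log|x|$ terms in $2U^{\mu_j}-U^{\mu_{j-1}}-U^{\mu_{j+1}}$ cancel identically; $g_j$ stays bounded at infinity. This cancellation is precisely what makes the problem only \emph{weakly} admissible, and it defeats the sign argument you rely on.

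The paper's route is shorter and sidesteps all of this. Given the neighbours $\mu_{j-1},\mu_{j+1}$, the constrained problem for $\mu_j$ on $\Delta_j$ is a one-measure minimization of $I(\mu_j)-I(\mu_j,\mu_{j-1}+\mu_{j+1})$ at fixed mass. For $j\ge2$ the attracting measure $\mu_{j-1}+\mu_{j+1}$ is a genuine \emph{positive} measure on the opposite half-line (this fails for $j=1$, where $\mu_0$ is negative, which is why $\mu_1$ can have restricted support). The unconstrained minimizer over signed measures of the prescribed total mass is then $\tfrac12\,\mathrm{Bal}(\mu_{j-1}+\mu_{j+1},\Delta_j)$, and the explicit density formula \eqref{eq:theo2proof3} for the balayage of a measure from one half-line onto the other shows that this density is strictly positive throughout $\Delta_j$. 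Since the unconstrained minimizer is already nonnegative, it coincides with the constrained one, so $\supp(\mu_j)=\Delta_j$. I would suggest replacing the Euler--Lagrange/superharmonicity approach with this balayage identity; it is precisely the positivity of the attracting measure that makes $j\ge2$ different from $j=1$, and that structural distinction is the real content of the lemma.
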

\begin{proof}
	To check the conditions in Assumption 2.1 of \cite{HaKu12}, we write the energy functional \eqref{eq:VEPF2} in the form 
	\[ \sum_{1 \leq i,j\leq r} c_{ij} I(\mu_i,\mu_j)
		+ \sum_{j=1}^r \int V_j d\mu_j \]
	with 
	\begin{equation} \label{eq:lem22proof1} 
	c_{ij} = \begin{cases}
		   1 & \text{ if } i = j, \\
		   -\frac{1}{2} & \text{ if } |i-j| = 1, \\
		   0 & \text{ otherwise}, \end{cases} 
		   \end{equation}
	and
	\begin{equation} \label{eq:lem22proof2} 
		V_j(x) = \begin{cases} 
		- U^{\mu_0}(x) = (1-\frac{1}{t}) \log|x+q^{-1}|,
			& \text{ if } j = 1, \\
		- U^{\mu_{r+1}}(x) = (1 + \frac{r}{t}) 
		\log | x - (-1)^r q|, & \text{ if } j = r, \\
		\equiv 0, & \text{ otherwise}. 
		\end{cases}  \end{equation}
	The interaction matrix $C = (c_{ij})$ is symmetric and positive definite, and each $V_j$ is continuous on 
	$\Delta_j$, since $-q^{-1} \not\in \Delta_1$
	and $(-1)^r q \not\in \Delta_r$.
	
	The prescribed total masses $m_j = \mu_j(\Delta_j)$ from \eqref{eq:massmuj} come in an arithmetic progression which implies by \eqref{eq:lem22proof1} that 
	\begin{equation} \label{eq:lem22proof3} 
	\sum_{j=1}^r c_{ij} m_j = 0,
		\qquad \text{ for }  i=2, \ldots, r-1, 
	\end{equation}
	and also
	\begin{equation} \label{eq:lem22proof4} 
		\sum_{j=1}^r c_{ij}  m_j 
		= \begin{cases}  \frac{1}{2} m_0 
		= \frac{1}{2} (1-\frac{1}{t}),
			& \text{ for } i =1, \\
		\frac{1}{2} m_{r+1}
		= \frac{1}{2} (1+ \frac{r}{t}),
		& \text{ for } i = r. 
		\end{cases} \end{equation}
	It follows from \eqref{eq:lem22proof2}, \eqref{eq:lem22proof3} and \eqref{eq:lem22proof4}
	that, for every $i=1, \ldots, r$,
	\[  V_i(x) - \left(\sum_{j=1}^r c_{i,j} \mu_j(\Delta_j) \right)
			\log(1+|x|^2) \to 0, 
		\quad	\text{ as } x \in \Delta_j \to \pm \infty. \]
	Thus all conditions of Assumption 2.1 in \cite{HaKu12}
	are satisfied, and the VEP is weakly admissible.
	Then there is 
	a unique minimizer by \cite[Corollary 2.7]{HaKu12}. 
	
	Given the other measures, the problem for
	$\mu_j$ (for $1 \leq j \leq r$) is to minimize
	\[ I(\mu_j) - I(\mu_j, \mu_{j-1}+ \mu_{j+1}) \]
	among measures on $\Delta_j$ with total mass
	\eqref{eq:massmuj}.
	Since $\mu_{j-1} + \mu_{j+1}$ is a positive measure
	for $j \geq 2$, it follows that
	 $\mu_j$ is a balayage measure (see \cite{SaTo97} for
	 the notion of balayage)
	\begin{equation} \label{eq:balmuj} 
	\mu_j = \frac{1}{2} \Bal \left(\mu_{j-1}+\mu_{j+1}, \Delta_j \right) 
	\qquad \text{ for } j =2, \ldots, r, \end{equation}
	and $\mu_j$ has full support for $j \geq 2$,
	see also \eqref{eq:theo2proof3} for the expression
	of the density of the balayage of a  measure
	on $(-\infty,0]$ onto $[0,\infty)$. There is a similar
	formula for the balayage of a measure on $[0,\infty)$
	to $(-\infty,0]$ that shows that it has indeed  a full support.
\end{proof}

The balayage property \eqref{eq:balmuj} means that 
\begin{equation} \label{eq:Umuj} 
	2 U^{\mu_j} = U^{\mu_{j-1}} + U^{\mu_{j+1}} 
	\quad \text{ on } \Delta_j, \qquad \text{ for } j=2,
	\ldots, r, \end{equation}
and this will be important for us in what follows.

The measure $\mu_1$ is the main player in the game. The argument
in the proof of Lemma \ref{lemma22} leading to \eqref{eq:balmuj}
does not  work for $j=1$, since $\mu_0$
is a negative measure. Therefore the balayage of
$\mu_0 + \mu_2$ onto $\Delta_1 = [0,\infty)$ is not
necessarily positive on the full half-line. 
However, if it is positive then \eqref{eq:balmuj} and \eqref{eq:Umuj}
hold for $j=1$ as well, and then also $\mu_1$ has a 
full support. It turns out
that this happens for $t$ sufficiently large (i.e.,
sufficiently close to $1$).

Our first main result is about the structure of the
support $\Sigma_1 = \supp(\mu_1)$ of $\mu_1$.
There are four possible cases that will be indicated 
with acronyms 
$\BIS$ = Bounded Interval Support, 
$\UIS$ = Unbounded Interval Support,
$\TIS$ = Two Interval Support, and 
$\FIS$ = Full Interval Support. 

In situations where we want to emphasize the 
dependence on $t$ of the various notions that
we introduced (and of others that are still to come), we append
a subscript $t$. Hence we write for example $\mu_{1,t}$,
$\Sigma_{1,t}$,  and so on.
	
\begin{theorem} \label{theorem23}
	Fix $ q > 0$. Let $(\mu_1, \ldots, \mu_r)$ be
	the minimizer of the vector equilibrium problem
	depending on the parameter $t \in (0,1)$.
\begin{enumerate} 
	\item[\rm (a)] There are four possible
	cases for $\Sigma_1 = \supp(\mu_1)$, depending on $t$, namely there exist $0 < x_1 < x_2 < \infty$ such that either
	$\BIS: \Sigma_ 1 = [0,x_1]$, or
	$\UIS: \Sigma_1 = [x_2,\infty)$, or
	$\TIS: \Sigma_1 = [0,x_1] \cup [x_2, \infty)$, or
	$\FIS: \Sigma_1 = [0,\infty)$.
	\item[\rm (b)]
	For each $j = 1, \ldots, r$ the measure
	$t \mu_{j,t}$ increases as a function of $t \in (0,1)$.
	\item[\rm (c)] Suppose $0 < q < 1$. Then $0$ is always
	in the  support of $\mu_1$ (and so $\UIS$ case does not occur for any $t \in (0,1)$).
	\item[\rm (d)] The measure $\mu_1$ has a density
	that is real analytic on the interior of its support
	with 
	a square-root vanishing at $x_1$ in the $\BIS$ and
	$\TIS$ cases, and at $x_2$ in the $\UIS$ and $\TIS$
	cases.  
	
	\item[\rm (e)] There exist constants $c_0 > 0$ and $c_{\infty} >0$ such that
	\begin{equation} \label{eq:mu1at0} 
		\frac{d\mu_1(x)}{dx} = c_0 x^{- \frac{r}{r+1}}
		\left(1+ O\left(x^{\frac{1}{r+1}}\right)\right) \quad \text{ as } x \to 0+ \end{equation}
	in $\BIS$, $\TIS$ and $\FIS$ cases, and
	\begin{equation} \label{eq:mu1atinf} 
	\frac{d\mu_1(x)}{dx} = c_{\infty} x^{- \frac{r+2}{r+1}}
	\left(1+ O\left(x^{-\frac{1}{r+1}}\right)\right) \quad \text{ as } x \to \infty \end{equation}
	in $\UIS$, $\TIS$ and $\FIS$ cases. 
\end{enumerate}
\end{theorem}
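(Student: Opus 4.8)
The plan is to analyze the minimizer through the variational (Euler--Lagrange) conditions of the VEP together with the balayage relations \eqref{eq:balmuj}--\eqref{eq:Umuj}, and to encode everything in an algebraic function whose sheet structure dictates the four cases. First I would write down the equilibrium conditions: there is a constant $\ell_1$ with
\begin{equation} \label{eq:plan-EL}
2 U^{\mu_1} - U^{\mu_0} - U^{\mu_2} = \ell_1 \text{ on } \Sigma_1, \qquad 2 U^{\mu_1} - U^{\mu_0} - U^{\mu_2} \le \ell_1 \text{ on } [0,\infty),
\end{equation}
while for $j \ge 2$ equality holds on all of $\Delta_j$ by Lemma~\ref{lemma22}. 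Differentiating and passing to Cauchy/Stieltjes transforms $F_j(z) = \int \frac{d\mu_j(s)}{z-s}$, the relations \eqref{eq:Umuj} become linear relations among the $F_j$ off the real axis, and the chain of measures $\mu_0,\dots,\mu_{r+1}$ with Nikishin-type coupling yields (after the standard manipulation for such tridiagonal systems) a single algebraic equation of degree $r+1$ satisfied by a function built from $F_1$. The prescribed masses \eqref{eq:massmuj} fix the behavior of $F_j$ at infinity, and the point masses $\mu_0,\mu_{r+1}$ fix the polar parts at $-q^{-1}$ and $(-1)^r q$; together these pin down the algebraic curve up to finitely many unknown branch points.

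Next I would establish parts (d) and (e) locally. Part (d): since $\mu_1$ satisfies \eqref{eq:plan-EL} and (for $j\ge 2$) is a balayage, each $\mu_j$ has a density that extends to the solution of a local scalar equilibrium problem with real-analytic external field on the interior of its support; the soft-edge square-root vanishing at a free endpoint $x_1$ (in $\BIS$, $\TIS$) or $x_2$ (in $\UIS$, $\TIS$) is the generic edge behavior for the minimizer of a weakly admissible problem and follows from the fact that the resolvent function $\sqrt{R(z)}$ has a simple branch point there, the inequality in \eqref{eq:plan-EL} forcing the sign of the density to be correct on the support side. Part (e): the exponents $-\tfrac{r}{r+1}$ at $0$ and $-\tfrac{r+2}{r+1}$ at $\infty$ come directly from the symmetrization $z\mapsto z^{r+1}$: the hard edge at the origin is the image of a smooth interior point of the symmetric measure $\mu^*$ on the $(r+1)$-star, so pulling the smooth density $dA/(\dots)$ through the branch map produces the Jacobian factor $|z|^{-2r/(r+1)}$, and integrating out the radial part gives $x^{-r/(r+1)}$; similarly the point $\infty$ corresponds to the north pole, where the push-forward measure \eqref{eq:dmuU} carries the weight $|z|^{-2r/(r+1)}(1+|z|^{2/(r+1)})^{-2} \sim |z|^{-2(r+2)/(r+1)}$. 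Alternatively, and more robustly, the exponent $\tfrac{1}{r+1}$ appears in the local expansion of the algebraic function at the ramification points $z=0$ and $z=\infty$ of the curve, where $r+1$ sheets come together; reading off the Puiseux series there yields \eqref{eq:mu1at0}--\eqref{eq:mu1atinf} with positive constants, positivity being inherited from $\mu_1 \ge 0$.

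For part (c), suppose $0<q<1$, i.e.\ $m_0 = 1 - 1/t < 0$ with the point mass $\mu_0$ sitting at $-q^{-1}$, which is relatively close to the origin. If $0\notin\Sigma_1$ then $\Sigma_1=[x_2,\infty)$ with $x_2>0$, and one evaluates the variational inequality in \eqref{eq:plan-EL} near $x=0$: on $(0,x_2)$ one has $2U^{\mu_1} - U^{\mu_0} - U^{\mu_2} \le \ell_1$ with equality at $x_2$ and derivative considerations at $x_2^+$; pushing $x\to 0+$ and using that $-U^{\mu_0}(x) = (1-\tfrac1t)\log|x+q^{-1}|$ is bounded and increasing while $U^{\mu_1}$ and $-U^{\mu_2}$ are controlled, one derives a contradiction with the required total masses (essentially because the negative charge $\mu_0$ near the origin cannot be balanced without $\mu_1$ reaching down to $0$). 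The cleanest way to make this rigorous is the sphere-side argument: when $0<q<1$ the points $p_j$ lie in the northern hemisphere, the droplet always contains the south pole, and the south pole corresponds to the origin; since the origin lies in $U$ it must lie in the relevant part of the support of $\mu^*$, hence $0\in\Sigma_1$. Part (b) is a monotonicity statement: I would compare minimizers at $t<t'$ by a convexity/variational argument, writing the energy \eqref{eq:VEPF2} as a quadratic form plus the $t$-dependent linear terms $\tfrac{1-t}{t}I(\mu_1,\delta_{-q^{-1}}) - \tfrac{r+t}{t}I(\mu_r,\delta_{(-1)^rq})$ and the $t$-dependent masses \eqref{eq:massmuj}; rescaling $\nu_j = t\mu_j$ makes the mass constraints $\nu_j(\Delta_j) = t + (j-1)$ affine in $t$ and the functional jointly convex, and a standard interpolation/comparison of the two Euler--Lagrange systems, using the Nikishin sign structure (each $c_{ij}\le 0$ off-diagonal) to propagate an inequality $\nu_{j,t} \le \nu_{j,t'}$ down the chain via the maximum principle for balayage, gives the monotonicity.

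Finally, part (a) — identifying exactly the four possibilities for $\Sigma_1$ — is the main obstacle. It requires showing that $\Sigma_1$ is always of one of the listed forms and nothing more exotic (e.g.\ three or more intervals, or a bounded interval not containing $0$ when $q\ge 1$). The strategy is: (i) $0$ or $\infty$ is always an endpoint-type point of $\Sigma_1$ — $0$ is in $\Sigma_1$ unless the density's hard-edge exponent forces otherwise, and one shows $\Sigma_1$ can omit a neighborhood of at most one of $\{0,\infty\}$ by a potential-theoretic argument using that $\mu_0,\mu_{r+1}$ are the only external ``sources''; (ii) the complement $[0,\infty)\setminus\Sigma_1$ is where strict inequality holds in \eqref{eq:plan-EL}, and each such gap corresponds to a real branch cut of the algebraic function being absent, so the number of gaps is bounded by the genus-type count of the curve; (iii) a genus argument (the paper explicitly mentions a genus-one Riemann surface for the $\TIS$ case) shows at most one bounded gap can open, giving $\Sigma_1 = [0,x_1]\cup[x_2,\infty)$ as the only disconnected case, and collapsing $x_1$, $x_2$, or $x_1 = x_2$ recovers $\UIS$, $\BIS$, $\FIS$. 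Making the genus count rigorous — controlling the degree and ramification of the algebraic function coming from the $r$-dimensional VEP, and ruling out additional spurious branch points — is where the real work lies; the connectedness/monotonicity input from part (b) is what guarantees that as $t$ varies the system moves monotonically through precisely these four regimes.
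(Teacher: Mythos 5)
Your proposal is a roadmap, not a proof, and several of the steps it hand-waves over are exactly where the paper's real arguments live.

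For part (a), you try to bound the number of gaps in $\Sigma_1$ by a genus count of an algebraic curve, and you acknowledge that making this rigorous is ``where the real work lies.'' But the genus of $\mathcal R$ is itself determined by the (unknown) structure of $\Sigma_1$, so this is circular as an a priori bound. The paper instead proves Proposition~\ref{prop31} by iterated balayage: the crucial Lemma~\ref{lemma32} shows that the density
$v$ of the balayage of $\sigma=-A\delta_{-q^{-1}}+\sigma^+$ onto $[0,\infty)$ has at most two zeros, via a Cauchy-determinant argument exploiting that the negative part of $\sigma$ is a single point mass; Lemma~\ref{lemma33} adds monotonicity of $v$ and $xv$ on the positive parts, and Lemma~\ref{lemma34} propagates the sign pattern through the balayage iteration. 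This is a genuinely different route, and it is the one that actually works.

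For part (b), your ``standard interpolation/comparison of Euler--Lagrange systems using the Nikishin sign structure'' does not go through, because $\mu_0$ is a negative measure: the simple monotonicity of balayage, $0\le\sigma\le\widetilde\sigma\Rightarrow\Bal(\sigma)\le\Bal(\widetilde\sigma)$, fails for the $j=1$ step. The paper's Section~\ref{subsec32} is entirely devoted to overcoming this: Lemma~\ref{lemma37} extends the monotonicity of $M$ to signed measures whose negative part is $-A\delta_{-q^{-1}}$ (again via iterated balayage), and the $M$-convexity machinery of Definition~\ref{def38} and Lemma~\ref{lemma310} is then built to push the inequality along the Nikishin chain. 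Without that work part (b) is unproved.

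For part (c), your primary argument (``derives a contradiction with the required total masses'') is too vague to assess, and the ``cleanest way'' you propose is circular: the identification of $0$ with the south pole and of $\Sigma_1$ with the motherbody is a consequence of Theorem~\ref{theorem211}, which relies on Theorem~\ref{theorem23}. The paper instead reduces the VEP to a scalar Muttalib--Borodin equilibrium problem (Proposition~\ref{prop311}), proves that the minimum of the external field lies in the support (Lemma~\ref{lemma312}, using harmonicity and the maximum principle plus a reflection argument), and then verifies that $V(0)=\min V$ when $0<q<1$ (Lemma~\ref{lemma313}). That route is self-contained and avoids the circularity.

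For part (d), the Deift--Kriecherbauer--McLaughlin result gives $(x_1-x)^{1/2+2N}$ behavior, and the point is to exclude $N>0$. You assert that the square root is the ``generic'' edge behavior, but you do not rule out higher-order vanishing. The paper does so by observing that the iterated-balayage densities $v_k$ are strictly monotone near $x_1$ with a derivative bounded away from zero uniformly in $k$ (from Lemma~\ref{lemma34}(b) and the formula \eqref{eq:lem32proof3}), so the limit density cannot have vanishing derivative at $x_1$. Finally, for part (e), your ``more robust'' alternative via the ramification of the algebraic function at $z=0$ and $z=\infty$ is essentially the paper's argument (Lemma~\ref{lemma41} plus Section~\ref{subsec412}), but it hinges on knowing that $z\Phi$ is a degree-two meromorphic function on $\mathcal R$ with a simple zero at the (single) point over $z=0$; you never establish the degree and the simplicity of that zero, whereas the paper obtains them by counting the poles of $z\Phi$ from \eqref{eq:residue1}--\eqref{eq:residue2}.
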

The proof of Theorem \ref{theorem23} is in section \ref{sec3}, except for the proof of part (e) 
which is in section \ref{subsec412}.

It follows from Theorem \ref{theorem23} that for $0 < q < 1$, there are two critical values 
\begin{equation} \label{eq:criticalt}
	0 \leq t_{1,cr} \leq t_{2,cr} \leq 1,
	\end{equation} 
depending on $q$,  such that 
we are in $\BIS$ case for $0 < t \leq t_{1,cr}$,
in the $\TIS$ case for $t_{1,cr}< t < t_{2,cr}$, and in
$\FIS$ case for $t_{2,cr} \leq t < 1$. 
For $r \geq 2$, the inequalities
in \eqref{eq:criticalt} are actually strict inequalities
and each of the three possible cases occurs for some
values of $t$. 

Ultimately, $t$ will be related by equation \eqref{eq:tina}
to the strength $a$ of the fixed charges on the sphere.
The critical values $t_{1,cr}$ and $t_{2,cr}$ will
correspond to $a_{2,cr}$ and $a_{1,cr}$ (in that order)
that are used at the end of section \ref{subsec11}.

\begin{remark} \label{remark24}
	There is a symmetry between $q$ and $q^{-1}$
	that allows us to restrict attention to $0 < q < 1$.
	
	Let $\vec{\mu} = (\mu_1, \ldots, \mu_r)$
	be a vector of measures as in the VEP of 
	Definition~\ref{def21}. Let $\nu_j$ be the image
	of $\mu_j$ under the inversion $x \mapsto 1/x$,
	i.e., $\nu_j$ is the measure on $\Delta_j$
	with 
	\begin{equation} \label{eq:remark241} 
	\int f d\nu_j =  \int f\left(\frac{1}{x}\right) d \mu_j(x) \end{equation}
	for a function $f$ on $\Delta_j$.
	Then it is an easy calculation to show that
	\begin{equation} \label{eq:remark242} 
		I(\nu_j,\nu_k) = I(\mu_j,\mu_k)
		+ \int \log |x| d\mu_j(x)
		\int d\mu_k + \int \log |x| d\mu_k(x) \int d\mu_j. \end{equation}
	
	Let us use $\mathcal E_q$ to denote  the energy functional	\eqref{eq:VEPF} corresponding 
	to the parameter $q > 0$. Then using \eqref{eq:remark242} and the total
	masses \eqref{eq:massmuj} of the measures we find 
	after straightforward calculations that
	\[ \mathcal E_{1/q}(\vec{\nu})
		=  \mathcal E_q(\vec{\mu})
			-   \frac{r(r+ 2t-1)}{t^2} \log q. \]
	Thus whenever $\vec{\mu} = (\mu_1, \ldots, \mu_r)$
	is  the minimizer of the VEP with parameter $q$,
	then $\vec{\nu} = (\nu_1, \ldots, \nu_r)$
	is the minimizer with parameter $1/q$. 

\medskip
Due to this symmetry between $q$ and $q^{-1}$, 
the support of $\mu_1$ is always unbounded for $q > 1$,
since for $0 < q < 1$ the support contains $0$
by part (c) of Theorem~\ref{theorem23}.
Instead of the
$\BIS$ case, we then have the $\UIS$ case for $t$ up to the first critical value. It is then
continued with the $\TIS$ case, and after 
a second critical value with the $\FIS$ case.
	
	\medskip
	
	The  above also shows that for $q=1$, the measure $\mu_1$ is invariant under the
	inversion $x \mapsto 1/x$. Then we do not have a
	$\BIS$ or $\UIS$ case, but
	we start with a $\TIS$ case for $t$ up to 
	a critical value, followed by the $\FIS$ case.	
\end{remark}

\subsection{A meromorphic function on a Riemann surface}
\label{subsec22}

\begin{figure}[t]
	\begin{center}
		\begin{tikzpicture}[scale = 1.5,line width = 1]
		\draw [very thick] (0,1) -- (0.5,1);
		\draw [very thick] (1,1)--(1.95,1);
		\filldraw (0,1) circle (.04);
		\filldraw (0.5,1) circle (.04);
		\filldraw (1,1) circle (.04);
		\draw (0,1) node[above]{$0$};
		\draw (0.5,1) node[above]{$x_1$};
		\draw (1,1) node[above]{$x_2$};
		
		\draw (-1.2,1.3) -- (2.2,1.3);
		\draw (-2,.7) -- (1.7,.7);
		\draw (2.2,1.3) -- (1.7,.7);
		\draw (-1.2,1.3) -- (-2,.7);
		\draw (-1.6,1.1) node[left]{$\mathcal{R}^{(1)}$};
		
		\draw [very thick](-1.6,0) -- (0.5,0);
		\draw [very thick] (1,0) -- (1.95,0); 
		\filldraw (0,0) circle (.04);
		\filldraw (0.5,0) circle (.04);
		\filldraw (1,0) circle (.04);
		\draw (0.5,0) node[below]{$x_1$};
		\draw (1,0) node[below]{$x_2$};
		
		\draw (-1.2,0.3) -- (2.2,0.3);
		\draw (-2,-.3) -- (1.7,-.3);
		\draw (2.2,.3) -- (1.7,-.3);
		\draw (-1.2,0.3) -- (-2,-.3);
		\draw (-1.6,0.1) node[left]{$\mathcal{R}^{(2)}$};
		
		\draw [very thick](-1.6,-1) -- (0,-1);
		\filldraw (0, -1) circle (.04);
		\draw (-1.2,-.7) -- (2.2,-.7);
		\draw (-2,-1.3) -- (1.7,-1.3);
		\draw (2.2,-.7) -- (1.7,-1.3); 
		\draw (-1.2,-.7) -- (-2,-1.3);
		\draw (-1.6,-0.9) node[left]{$\mathcal{R}^{(3)}$};
		
		\draw (0,-1) node[below]{$0$};		
		\draw [dashed] (0,1) -- (0,-1);
		\draw [dashed] (0.5,1) -- (0.5,0);
		\draw [dashed] (1,1) -- (1,0);
		\end{tikzpicture}  	\end{center}
	\caption{ 
		The Riemann surface $\mathcal R$  in $\TIS$ case
	(for $r=2$)  \label{figure2}}
\end{figure}
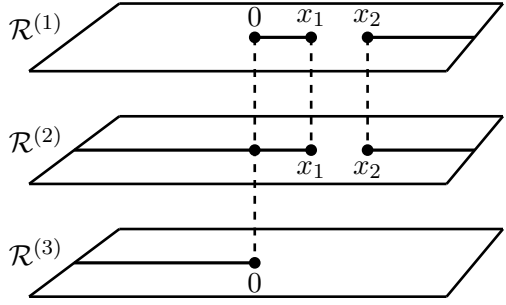

In what follows we restrict to the case $0 < q < 1$.
Then $0 \in \supp(\mu_1)$ by Theorem \ref{theorem23} (c)
and we are in one of the $\BIS$, $\TIS$ or $\FIS$ cases.
We also let $0< t < 1$.
Our further results are based on the consideration of
a Riemann surface.
\begin{definition} \label{def24} The Riemann surface
	$\mathcal R$ (see Figure \ref{figure2}) has 
	$r+1$ sheets $\mathcal R^{(j)}$, $j=1, \ldots, r+1$, where
\begin{equation}  \label{eq:Rsheets}
	\begin{aligned} 
	 \mathcal R^{(1)} & = \mathbb C \setminus \supp(\mu_1),  \\
	\mathcal R^{(j)} & = \mathbb C \setminus 
		(\supp(\mu_{j-1}) \cup \supp (\mu_j)),
		\quad \text{ for } j =2, \ldots, r+1, \\  
	\mathcal R^{(r+1)} & = \mathbb C \setminus \supp(\mu_{r})
	\end{aligned}
\end{equation}
where $(\mu_1, \ldots, \mu_r)$ is the unique minimizer
for the VEP of Definition \ref{def21}, and  we recall that $\supp(\mu_1) = \Sigma_1 \subset [0,\infty)$
and $\supp(\mu_j) = \Delta_j = (-1)^{j-1} [0,\infty)$
for $j=2,\ldots, r$. Sheet $\mathcal R^{(j)}$ is connected
to sheet $\mathcal R^{(j+1)}$ along the support of $\mu_j$
in the usual crosswise manner for $j=1, \ldots, r$. 
We also add two (in $\BIS$ case) or 
one (in other cases) points at infinity in order to 
obtain a compact Riemann surface $\mathcal R$.
\end{definition}

A count of branch points, together with the Riemann-Hurwitz formula, see e.g.\ \cite{Sc14}, 
shows that $\mathcal R$ has genus zero  in the $\BIS$ and $\FIS$ cases, while the genus is one in the $\TIS$ case.

The Stieltjes transform of the measure $\mu_j$ is
\begin{equation} \label{eq:Fj}
	F_j(z) = \int \frac{d\mu_j(x)}{z-x},  \qquad z \in \mathbb C \setminus \supp(\mu_j). \end{equation}
This is also defined for $j=0$ and $j=r+1$ in which cases
we have the simple rational functions
\begin{equation} \label{eq:F0}
	F_0(z) = \frac{-1+t}{t(z+q^{-1})},
	\qquad F_{r+1}(z) = \frac{r+t}{t(z-(-1)^r q)} 
	\end{equation}
We use the Stieltjes transforms  
to define a function on $\mathcal R$.
\begin{definition} \label{def26} 
The function $\Phi$ is defined on the Riemann surface
via its restrictions $\Phi^{(j)}$, $j=1, \ldots, r+1$, to
the various sheets, by 
\begin{equation}\label{eq:Phij}
	\Phi^{(j)}(z) = t F_j(z) - t F_{j-1}(z),
		\qquad z \in \mathcal R^{(j)},
			\end{equation}
for $j=1, \ldots, r+1$.
\end{definition}

Differentiating the identity \eqref{eq:Umuj} we obtain
\begin{equation} \label{eq:Fjjump}
	F_{j,+} - F_{j,-} = F_{j-1} + F_{j+1}
	\quad \text{on } \Delta_j = \supp(\mu_j),
	\qquad \text{for } j = 2, \ldots, r, \end{equation}
which means in view of \eqref{eq:Phij} that
$\Phi^{(j)}_{\pm} = \Phi^{(j+1)}_{\mp}$
on $\supp(\mu_j)$ for $j=2, \ldots, r$.
Thus $\Phi$ is analytic across the cut connecting
sheets $\mathcal R^{(j)}$ and $\mathcal R^{(j+1)}$
for $j \geq 2$. $\Phi$ is also analytic across the
cut connecting sheets $\mathcal R^{(1)}$ and
$\mathcal R^{(2)}$, as this follows from
the variational condition associated with the VEP 
\[ 2 U^{\mu_1} = U^{\mu_0} + U^{\mu_2} + c
	\quad \text{on } \supp(\mu_1), \]
which upon differentation leads to \eqref{eq:Fjjump}
on $\supp(\mu_j)$ for $j=1$ as well.
Thus $\Phi$ is meromorphic on $\mathcal R$
and it has a number of crucial properties that
will be discussed in section \ref{subsec41}. 

\subsection{The subset $U$} \label{subsec23}
With the help of $\Phi$ we define a subset $U$ 
of the complex plane that will lead to the droplet. 

\begin{definition} \label{def27} 
	The set $U \subset \mathbb C \cup \{\infty\}$ is
	defined by 
\begin{equation} \label{eq:defU} 
	U = \overline{\{ z \in \mathbb C \mid \left(\Im z\right) \cdot \Im 
	\left(z \Phi^{(1)}(z)\right) < 0 \}}. 
\end{equation}
We write $U_t$ if we want to emphasize the 
dependence of $U$ on  the parameter $0 < t < 1$.
\end{definition}

\begin{theorem} \label{theorem28} Let $0 < q <1$ be fixed. 
	Then the following hold.
	\begin{enumerate}
		\item[\rm (a)] $U$ is a closed set with the
		properties
		$\Sigma_1 \subset U$ and $- q^{-1}$. 	
				
		\item[\rm (b)] For $0 < t \leq t_{1,cr}$ 
		(the $\BIS$ case), $U$
		is a bounded simply connected set.
		
		\item[\rm (c)] For $t_{1,cr} < t < t_{2,cr}$ 
		(the $\TIS$ case), 
		$U$ consists of two disjoint components: a bounded component containing
		$[0,x_1]$ and an unbounded component containing $[x_2,\infty)$.
		The complement $\mathbb C \setminus U$ is a bounded doubly connected domain.
		
		\item[\rm (d)] For $t_{2,cr}  \leq t < 1$ 
		(the  $\FIS$ case), $U$ is unbounded and connected. The complement
		$\mathbb C \setminus U$ is bounded and simply connected.
		
		\item[\rm (e)] $t \mapsto U_t$ is increasing with $t$.
		
		\item[\rm (f)] $z \Phi^{(1)}(z)$ is real-valued on the boundary 	$\partial U$ and 
		\begin{equation} \label{eq:partialU}
			z \Phi^{(1)}(z) = \frac{|z|^{\frac{2}{r+1}}}{1 + |z|^{\frac{2}{r+1}}} \qquad \text{ for } z \in \partial U. \end{equation}
	\end{enumerate}
\end{theorem}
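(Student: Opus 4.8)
The plan is to read off all six statements from the explicit form of $\Phi^{(1)}$ on the first sheet. By \eqref{eq:Phij} and \eqref{eq:F0},
\[
 \Phi^{(1)}(z) = t F_1(z) + \frac{1-t}{z+q^{-1}},
 \qquad
 z\,\Phi^{(1)}(z) = \int \frac{z\,d(t\mu_1)(x)}{z-x} \;+\; \frac{(1-t)\,z}{z+q^{-1}} ,
\]
so $z\Phi^{(1)}$ is analytic on $\mathbb C\setminus\Sigma_1$ except for a simple pole at $-q^{-1}$, it tends to $1$ as $z\to\infty$, and it tends to $0$ as $z\to0$ with the $z^{1/(r+1)}$ behaviour of \eqref{eq:mu1at0}. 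Since $\overline{\Phi^{(1)}(z)}=\Phi^{(1)}(\bar z)$, the defining function $(\Im z)\cdot\Im\!\bigl(z\Phi^{(1)}(z)\bigr)$ in \eqref{eq:defU} is invariant under $z\mapsto\bar z$, so $U$ is closed (being a closure) and symmetric in $\mathbb R$. For $\Sigma_1\subset U$ I would look just above the interior of $\Sigma_1$, where $\Im z>0$ and $\Im\!\bigl(z\Phi^{(1)}_+(x)\bigr)=-\pi t\,x\,\mu_1'(x)<0$ (the density is positive on the interior of its support by Theorem~\ref{theorem23}(d)); the defining function is negative there, so the open set in \eqref{eq:defU} accumulates on all of $\Sigma_1$. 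For the statement about $-q^{-1}$, write $z\Phi^{(1)}(z)=-\dfrac{(1-t)q^{-1}}{z+q^{-1}}+g(z)$ with $g=t\,zF_1(z)+(1-t)$, which is real-analytic near $-q^{-1}$ and real-valued on $\mathbb R$; then $(\Im z)\Im\!\bigl(z\Phi^{(1)}(z)\bigr)=(\Im z)^2\bigl(\tfrac{(1-t)q^{-1}}{|z+q^{-1}|^2}+O(1)\bigr)\ge 0$ on a punctured neighbourhood of $-q^{-1}$, so $-q^{-1}\notin U$; its precise position in $\mathbb C\setminus U$ then follows from parts (b)--(d).

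For the monotonicity (e) I would restrict to the upper half-plane, where $(\Im z)\Im(z\Phi^{(1)})<0$ reads simply $\Im(z\Phi^{(1)})<0$. For $\Im z>0$ and $x\ge0$ one has $\Im\frac{z}{z-x}=\frac{-x\,\Im z}{|z-x|^2}\le 0$ and $\Im\frac{z}{z+q^{-1}}=\frac{q^{-1}\Im z}{|z+q^{-1}|^2}>0$; since $t\mu_{1,t}$ increases with $t$ by Theorem~\ref{theorem23}(b) while the coefficient $1-t$ decreases, the formula above shows that $t\mapsto\Im\!\bigl(z\Phi^{(1)}_t(z)\bigr)$ is decreasing for each fixed $z$ with $\Im z>0$. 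Hence $U_t\cap\{\Im z>0\}$ grows with $t$, and by symmetry so does $U_t$; moreover letting $t\to0^+$ and $t\to1^-$ gives the extreme regimes ($U_t$ shrinking to a neighbourhood of $\Sigma_1$, respectively exhausting $\mathbb C$).

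For the topology in (b)--(d) I would analyse the real-analytic level set $L=\{\,z\notin\Sigma_1\cup\{-q^{-1}\}:\Im(z\Phi^{(1)}(z))=0\,\}$: away from $\mathbb R$ the boundary $\partial U$ is a union of components of $L$, each of which is an arc with endpoints among the zeros of $\frac{d}{dz}\bigl(z\Phi^{(1)}(z)\bigr)$, the endpoints of $\Sigma_1$, the pole $-q^{-1}$, and $\infty$. The possible configurations are controlled by the properties of $\Phi$ on $\mathcal R$ from Section~\ref{subsec41} together with the branch-point count behind the Riemann--Hurwitz statement already recorded (genus $0$ in the $\BIS$ and $\FIS$ cases, genus $1$ in the $\TIS$ case; cf.\ Figure~\ref{figure2}). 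Reading off the sign of the defining function of $U$ then yields: a bounded simply connected $U$ in the $\BIS$ case; in the $\TIS$ case a bounded component of $U$ containing $[0,x_1]$ and an unbounded one containing $[x_2,\infty)$, with $\mathbb C\setminus U$ bounded and doubly connected; and in the $\FIS$ case an unbounded connected $U$ with bounded simply connected complement. Combining this with the monotonicity of (e), the continuity of $t\mapsto U_t$, and the $t\to0,1$ limits pins down which configuration holds on which $t$-interval, so that $t_{1,cr}$ is the $\BIS$/$\TIS$ transition and $t_{2,cr}$ the $\TIS$/$\FIS$ transition, and (with Theorem~\ref{theorem23}) that all cases occur with strict inequalities.

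Finally, for (f): on $\partial U\setminus\Sigma_1$ one has $\Im(z\Phi^{(1)}(z))=0$ by definition of $U$, and on $\partial U\cap\mathbb R$ the point $z$ is real and outside $\Sigma_1$, so $z\Phi^{(1)}(z)$ is real there as well; hence $z\Phi^{(1)}$ is real-valued on all of $\partial U$. For the exact value I would introduce
\[
 G(z):=\left( \frac{z\Phi^{(1)}(z)}{1-z\Phi^{(1)}(z)} \right)^{r+1},
\]
which is single-valued and meromorphic on $\mathbb C\setminus\Sigma_1$ (the $(r+1)$-st power turns the fractional power $|z|^{2/(r+1)}$ in \eqref{eq:partialU} into $|z|^2$, and it resolves the branching of $z\Phi^{(1)}$ at $0$ and, via \eqref{eq:mu1atinf}, at $\infty$ in the $\TIS$ and $\FIS$ cases). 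A short manipulation shows \eqref{eq:partialU} is equivalent to $G(z)=z\bar z$ on $\partial U$, i.e.\ $G(z)/z=\bar z$ on $\partial U$: a Schwarz-function type identity. I would establish this by locating the zeros and poles of $G$ using the complete list of properties of $\Phi$ on $\mathcal R$ from Section~\ref{subsec41}, and combining them with the reality of $G$ on $\partial U$ and the matching of $G$ with $|z|^2$ at the endpoints of $\Sigma_1$ to force $G(z)-z\bar z$ to vanish on $\partial U$. I expect this last step --- upgrading ``$z\Phi^{(1)}$ real on $\partial U$'' to the precise identity \eqref{eq:partialU}, equivalently showing that $G/z$ is the Schwarz function of $U$ --- to be the main obstacle, since it genuinely uses the explicit algebraic structure of $\Phi$, whereas (a)--(e) are essentially bookkeeping built on Theorem~\ref{theorem23} and the Riemann--Hurwitz count.
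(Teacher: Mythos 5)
Parts (a) and (e) of your proposal are essentially correct. For $\Sigma_1\subset U$ you use the same Stieltjes-inversion observation as the paper (cf.\ \eqref{eq:xPhi1onsupp}), and your argument that $-q^{-1}\notin U$ is a clean local one: expanding $z\Phi^{(1)}$ around the pole shows the defining function of $U$ behaves like $\tfrac{(1-t)q^{-1}(\Im z)^2}{|z+q^{-1}|^2}+O((\Im z)^2)>0$ near $-q^{-1}$. The paper instead deduces $-q^{-1}\notin U$ globally, by first identifying $U\cap\mathbb R$ via the monotonicity of $z\Phi^{(1)}$ on the real line; your version is more elementary. Your monotonicity argument for (e) is the same as the paper's in substance: it uses Theorem~\ref{theorem23}(b) and the sign of $\Im\frac{z}{z-x}$ for $x\ge0$, the paper just packages it via the derivative measure $\rho_{1,t}=\partial_t(t\mu_{1,t})$, which is a probability measure.

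However, there is a genuine gap in parts (b)--(d). You correctly note that $\partial U$ is built from level arcs of $\Im(z\Phi^{(1)})$ whose branch points are the critical points of $z\Phi$, and you invoke the Riemann--Hurwitz count (two critical points in $\BIS$ and $\FIS$, four in $\TIS$). But knowing the \emph{number} of critical points does not determine the topology of $U$; one needs to know \emph{where} they are and how the critical \emph{values} are ordered in order to see how the arcs connect. This is precisely the content of Lemma~\ref{lemma43} in the paper: all critical points are real, all on the first sheet, located in the specific intervals $(x_0,0)$, $(-\infty,-q^{-1})$, $(x_1,x_2)$ etc.\ (depending on the case), with the critical values ordered as in \eqref{eq:critvalBIS}--\eqref{eq:critvalFIS}. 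Establishing this requires, among other things, the square-root vanishing of $\mu_1$'s density at $x_1,x_2$ (Theorem~\ref{theorem23}(d), which forces $(z\Phi^{(1)})'\to\mp\infty$ at these endpoints) and the sign of $\frac{d}{dz}(zF_j(z))$ on the gap. Without these facts, ``reading off the sign of the defining function'' does not determine whether a level arc emanating from a critical point terminates at another critical point, at $\infty$, or returns to itself, so the conclusions in (b)--(d) are asserted rather than proved.

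Part (f) is not proved either, and you say so yourself. Your proposal to study $G(z)=\left(\frac{z\Phi^{(1)}(z)}{1-z\Phi^{(1)}(z)}\right)^{r+1}$ and show it is the Schwarz function of $\partial U$ (i.e.\ $G(z)=|z|^2$ on $\partial U$) is a sensible reformulation --- indeed $\frac{S}{1-zS}$ is the Schwarz function of $\partial\Omega$ as noted after Definition~\ref{def29}, and $G$ is its $(r+1)$st power after the change $z\mapsto z^{r+1}$ --- but the hard part is exactly what you postpone. The paper instead writes down the degree-$(r+1)$ polynomial equation \eqref{eq:thm24fproof1} satisfied by $\Phi$, pins down each elementary symmetric function $e_k$ as the explicit rational function $z^k e_k(z)=\binom{r+1}{k}\frac{z(z+A_k)}{(z+q^{-1})(z-(-1)^rq)}$ (using: $z\Phi$ has degree two, the simple poles, $z^k e_k\to\binom{r+1}{k}$ at $\infty$, and $z^k e_k(0)=0$ because $z=0$ is a zero of $z\Phi$), then multiplies through, isolates the two non-polynomial terms via the binomial theorem, and takes imaginary parts along $\partial U$. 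The crucial algebraic fact that makes this work is that $q^{-1}\cdot(-1)^rq=(-1)^r$, i.e.\ the two parameters in $\mu_0$ and $\mu_{r+1}$ in \eqref{eq:mu0} are reciprocals up to sign; this is what collapses the right-hand side of \eqref{eq:thm24fproof4} into $(-1)^rz^{-1}(z\Phi)^{r+1}+z(z\Phi-1)^{r+1}$ and yields \eqref{eq:partialU}. Your $G$-based route would have to rediscover the same structure, and in particular would need a uniqueness argument that you have not supplied.
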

The proof of Theorem \ref{theorem28} is in section \ref{sec4}. See Figure \ref{figureU} for plots
of $U$ in the three cases.

\begin{figure}[t]
	\centering	
	\begin{overpic}[scale=0.45]{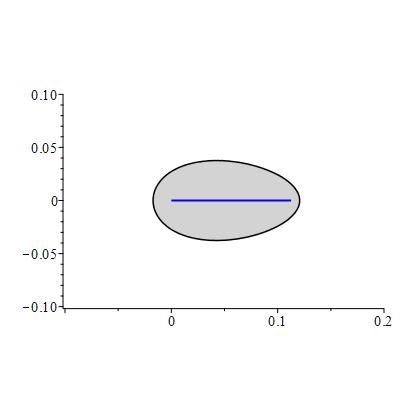}
		\put(42,45){$0$}
		\put(69,45){$x_1$}
		\put(20,45){$-q^{-1}$}
		\put(80,60){$\BIS$ case}
	\end{overpic} \\ \vspace*{-12mm}
	\begin{overpic}[scale=0.35]{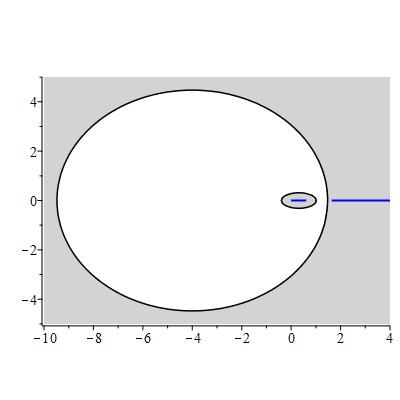}
		\put(69,42){$0$}
		\put(75,43){$x_1$}
		\put(82,52){$x_2$}
		\put(25,57){$\TIS$ case}
		\put(30,45){$-q^{-1}$}
	\end{overpic} \qquad
	\begin{overpic}[scale=0.3]{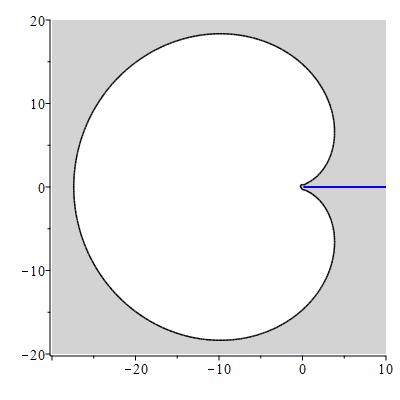}
		\put(70,48){$0$}
		\put(30,62){$\FIS$ case}
		\put(45,50){$-q^{-1}$}
	\end{overpic}
	\caption{Domain $U$ (shaded region) in the three cases. The blue  line denotes the  support of $\mu_1$
		and it is located within $U$, while $-q^{-1}$ is located outside $U$.
		 \label{figureU}}
\end{figure}

\subsection{The symmmetric domain $\Omega$
	with spherical measure}

We now introduce $r+1$ fold symmetry.

\begin{definition} \label{def29}
	We define a domain $\Omega$ (see Figure \ref{figureOmega})
	\begin{equation} \label{eq:defOmega} 
		\Omega = \{ z \in \mathbb C \mid z^{r + 1} \in U \},
		\end{equation}
	and a function
	\begin{equation} \label{eq:defS}
	S(z) = z^r \Phi^{(1)} \left(z^{r+1}\right)
	\end{equation}
	which we call the spherical Schwarz function of $\partial \Omega$.
	\end{definition}
We call $S$ the spherical Schwarz function because of the
property 
\begin{equation} \label{eq:sphSchwarz} 
 	 S(z) = \frac{\bar{z}}{1+|z|^2} \quad \text{ for } z \in \partial \Omega,
 	 \end{equation}
which follows from \eqref{eq:partialU} and the definitions in  Definition \ref{def29}. It readily follows
from \eqref{eq:sphSchwarz} that $\frac{S(z)}{1-zS(z)} = \overline{z}$
for $z \in \partial \Omega$, and so $\frac{S(z)}{1-zS(z)}$
is the usual Schwarz function of $\partial \Omega$,
and the two notions are very much intertwined,
see also the paper \cite{CrCl03} on vertex dynamics on the sphere.

Then $S$ is defined and meromorphic
on $\{ z \in \mathbb C \mid z^{r+1} \not\in \supp(\mu_1) \}$
with poles at the solutions of $z^{r+1} = - q^{-1}$,
with the behavior $z S(z) \to 1$ as $z \to \infty$.
Also $S$ has an analytic continuation 
to a meromorphic
function on a compact $r+1$ sheeted Riemann surface 
where  $S(z) = z^r \Phi^{(j)}(z^{r+1})$ on the $j$th
sheet. This analytic continuation has poles on the
$(r+1)$st sheet given by the solutions of $z^{r+1} = (-1)^r q$.

\begin{figure}[t]
	\centering	
	\begin{overpic}[scale=0.4]{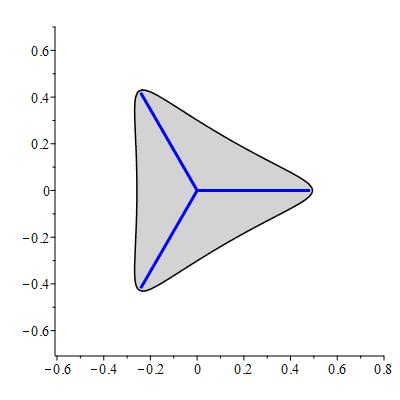}
		\put(70,70){$\BIS$ case}
	\end{overpic} \\ \vspace*{-2mm}
	\begin{overpic}[scale=0.4]{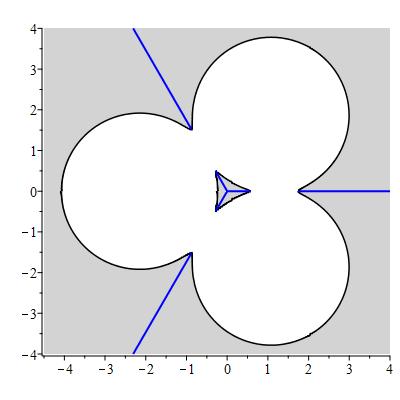}
		\put(22,55){$\TIS$ case}
	\end{overpic} \qquad
	\begin{overpic}[scale=0.4]{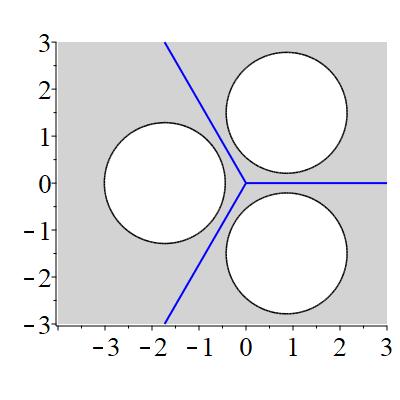}
		\put(15,80){$\FIS$ case}
	\end{overpic}
	\caption{Domain $\Omega$ (shaded region) in the three cases for $r=2$. The blue lines are the  support of $\mu^*$.
		\label{figureOmega}}
\end{figure}

We next define the  two measures $\mu_{\Omega}$ and $\mu^*$.
\begin{definition} \label{def210}
We define a measure $\mu_{\Omega}$ on $\Omega$ by
\begin{equation} \label{eq:muOmega}
	d\mu_{\Omega}(z) = \frac{1}{\pi t} \left. \frac{dA(z)}{\left( 1+ |z|^2\right)^2} \right|_{\Omega},  
	\end{equation}
and $\mu^*$ as the unique measure on the $r+1$ star
\begin{equation} \label{eq:star} 
	\{ z \mid z^{r+1} \in \mathbb R^+ \} 
	\end{equation}
that is invariant under rotation 
$z \mapsto e^{\frac{2\pi i}{r+1}} z$
and whose pushforward under $z \mapsto z^{r+1}$
is equal to $\mu_1$ (the first component of the
minimizer of the VEP).
\end{definition}

This leads to the final main result of the paper.

\begin{theorem} \label{theorem211}
	\begin{enumerate}
		\item[\rm (a)] 	$\mu_{\Omega}$ is a probability measure on $\Omega$, and $\mu^*$ is a probability
		measure on \eqref{eq:star}.
		\item[\rm (b)] $t \mapsto \Omega_t$, $t \mapsto t \mu_{\Omega,t}$ 
		and $t \mapsto t \mu^*_t$ are increasing for 
		$t \in (0,1)$. 
		\item[\rm (c)] There is a constant $c_1 = c_{1,t}$ such that
		\begin{align} \label{eq:muOmega1bis}
		U^{\mu_{\Omega}}(z) -
		\frac{1-t}{(r+1)t} \log|z^{r+1}+q^{-1}|
		+ \frac{1}{2t} \log \left( 1 + |z|^2 \right) 
		\begin{cases}
		= c_1, & z \in \Omega, \\
		\geq c_1, & z \in \mathbb C.
		\end{cases}	
		\end{align} 
	If $\Omega$ is unbounded, then $c_1 = 0$.
	\item[\rm (d)] There is a constant $c_2 = c_{2,t}$ such that
	\begin{align} \label{eq:muOmega2bis} 
	U^{\mu_{\Omega}}(z) - U^{\mu^*}(z)
	\begin{cases} 
	= c_2, & z \in \mathbb C \setminus \Omega, \\
	\leq c_2, & z \in \mathbb C.
	\end{cases}
	\end{align}
	If $\Omega$ is bounded, then $c_2 = 0$.
\end{enumerate}
\end{theorem}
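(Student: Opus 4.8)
The plan is to establish the four parts in the order (a), (c), (d), (b), since the variational characterizations in (c) and (d) feed naturally into the monotonicity statement.

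For part (a), the total mass of $\mu_\Omega$ is $\frac{1}{\pi t} \int_\Omega \frac{dA(z)}{(1+|z|^2)^2}$, and I would compute $\int_\Omega \frac{dA}{(1+|z|^2)^2}$ by first pushing forward under $z \mapsto z^{r+1}$ to reduce the integral to one over $U$ with the density in \eqref{eq:dmuU}, then relating that integral to the total mass of $\mu_1$ together with a contour-integral argument. Concretely, $\frac{1}{\pi}\int_U \frac{dA(w)}{|w|^{2r/(r+1)}(1+|w|^{2/(r+1)})^2}$ should be computable via Green's theorem / Stokes, turning the area integral into a boundary integral over $\partial U$, where \eqref{eq:partialU} lets us substitute $w\Phi^{(1)}(w)$ for the spherical factor. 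The residue structure of $\Phi^{(1)}$ — it is $tF_1 - tF_0$, and $F_0$ has a simple pole at $-q^{-1}$ with residue $-(1-t)/t$ while $F_1$ has total mass $\mu_1(\Delta_1)=1$ — should produce exactly the value that makes $\mu_\Omega(\Omega)=1$. That $\mu^*$ is a probability measure then follows immediately since its pushforward $\mu_1$ has mass $1$ by \eqref{eq:massmuj} with $j=1$.

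For part (c), I would first derive the identity in the complex $U$-variable, namely that $U^{\mu_U} - \frac{1-t}{t}\log|w+q^{-1}| + (\text{spherical term in }|w|)$ is constant on $U$ and dominant off $U$, and then unfold it to the $r+1$-fold symmetric picture to obtain \eqref{eq:muOmega1bis}. The equality on $\Omega$ should come from the fact that $\partial U$ is a level curve of $z\Phi^{(1)}(z)$ by \eqref{eq:partialU}: one shows the gradient of the left-hand side of \eqref{eq:muOmega1bis} vanishes identically on $\Omega$ by expressing it through $\bar\partial$ of the potential (which equals $\mu_\Omega$ on $\Omega$) against the Schwarz-function identity \eqref{eq:sphSchwarz}. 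For the inequality on $\mathbb{C}\setminus\Omega$ I would use that the left-hand side is (sub/super)harmonic there — it satisfies a Laplace-type equation with the right sign coming from the measure being a positive multiple of area — together with the boundary matching on $\partial\Omega$ and the behavior at $\infty$ (forcing $c_1=0$ when $\Omega$ is unbounded, since the potential and the $\log$ terms then balance at infinity). Part (d) is handled dually: $\mu^*$ is the pushforward-symmetrization of $\mu_1$, and the jump/analyticity properties of $\Phi$ across the sheets recorded after Definition \ref{def26} give $U^{\mu^*} = U^{\mu_\Omega} + c_2$ on $\mathbb{C}\setminus\Omega$ via the equality $\Phi^{(1)}_+ - \Phi^{(1)}_- $ on $\Sigma_1$ being expressible through $F_1$; the inequality $\le c_2$ on all of $\mathbb{C}$ is the motherbody condition and should follow by showing $U^{\mu_\Omega} - U^{\mu^*}$ is superharmonic where $\mu_\Omega$ lives and using the sign of its Laplacian, with $c_2=0$ when $\Omega$ is bounded by evaluating at $\infty$.

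Finally, part (b) follows from the monotonicity already established upstream: Theorem \ref{theorem23}(b) gives that $t\mu_{j,t}$ increases in $t$, hence so does $t\mu^*_t$ (it is a linear image of $t\mu_{1,t}$); Theorem \ref{theorem28}(e) gives $U_t$ increasing, hence $\Omega_t$ increasing through \eqref{eq:defOmega}; and $t\mu_{\Omega,t}$ increasing then follows because $t\,d\mu_{\Omega}(z) = \frac{1}{\pi}\frac{dA(z)}{(1+|z|^2)^2}\big|_{\Omega_t}$ is a fixed positive density restricted to the increasing family $\Omega_t$. The main obstacle I anticipate is the inequality (the $\geq c_1$ part of \eqref{eq:muOmega1bis} and the $\leq c_2$ part of \eqref{eq:muOmega2bis}) on the full plane rather than just the matching on $\partial\Omega$: verifying global (super/sub)harmonicity requires controlling the sign of the density everywhere and handling the logarithmic singularity at the solutions of $z^{r+1}=-q^{-1}$ correctly, and in the $\TIS$ case one must also make sure the doubly-connected complement does not spoil the maximum-principle argument — this is presumably where the genus-one Riemann surface analysis of Section \ref{sec4} is really needed.
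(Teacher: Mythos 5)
Parts (a) and (b) of your proposal are in line with the paper: (a) is proved by letting $z\to\infty$ in the Stieltjes transform formulas of Lemma~\ref{lemma52} (which the paper gets by the same Green's formula / residue argument you sketch), and (b) is exactly the chain Theorem~\ref{theorem23}(b) $\Rightarrow t\mu^*_t$ increasing, Theorem~\ref{theorem28}(e) $\Rightarrow U_t, \Omega_t, t\mu_{\Omega,t}$ increasing.

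The gap is in your proposed proof of the inequalities in (c) and (d). Your plan for the equality on $\Omega$ (resp.\ $\mathbb C\setminus\Omega$) works: using Lemma~\ref{lemma52} one checks that the $\partial_z$-derivative of the left-hand side of \eqref{eq:muOmega1bis} vanishes on $\Omega$, so it is constant on each connected component. But the step ``the inequality on the complementary region follows by sub/superharmonicity and the maximum principle'' fails, because the relevant Laplacian is \emph{not} one-signed there. For instance, on $\mathbb C\setminus\Omega$ the left side of \eqref{eq:muOmega1bis} has $U^{\mu_\Omega}$ harmonic, $\tfrac{1}{2t}\log(1+|z|^2)$ strictly subharmonic, and $-\tfrac{1-t}{(r+1)t}\log|z^{r+1}+q^{-1}|$ superharmonic with $+\infty$ singularities at the $r+1$ solutions of $z^{r+1}=-q^{-1}$, all of which lie in $\mathbb C\setminus\Omega$; the total Laplacian is a smooth positive term plus negative Dirac masses, so neither the maximum nor the minimum principle applies directly. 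Likewise for \eqref{eq:muOmega2bis}: $\mu_\Omega$ is a two-dimensional area measure and $\mu^*$ is one-dimensional, so $U^{\mu_\Omega}-U^{\mu^*}$ has a Laplacian on $\Omega$ whose sign is not controlled. The paper circumvents exactly this problem with the Buyarov--Rakhmanov dynamical linearization: it introduces the $t$-derivative measures $\nu_t=\partial_t(t\mu_{\Omega,t})$ (supported on $\partial\Omega_t$) and $\rho_t=\partial_t(t\mu^*_t)$, so that $U^{\nu_t}$ is harmonic off $\partial\Omega_t$ and the comparison functions $-\tfrac{1}{r+1}\log|z^{r+1}+q^{-1}|-U^{\nu_t}$ on $\mathbb C\setminus\Omega_t$ and $U^{\rho_t}-U^{\nu_t}$ on $\Omega_t$ are genuinely superharmonic; it then applies the minimum principle there (Lemma~\ref{lemma56}), and recovers \eqref{eq:muOmega1bis}--\eqref{eq:muOmega2bis} by integrating in $t$ via \eqref{eq:BuRa}--\eqref{eq:BuRa2}. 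This linearization is the key idea missing from your proposal.

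You do flag the $\TIS$ difficulty, but you misattribute it: what is needed is not the Section~\ref{sec4} Riemann-surface analysis but the new Lemmas~\ref{lemma53} and~\ref{lemma54} in Section~\ref{sec5}. Constancy on each component of $\Omega_t$ gives two a priori different constants; showing they coincide requires evaluating $U^{\nu_t}$ at $0$, which hinges on the cycle-integral identity \eqref{eq:lem54TIS} on the genus-one curve. Without this step the proof of (c) and (d) is incomplete in the $\TIS$ case even if the harmonicity issue above were fixed.
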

The proof of Theorem \ref{theorem211} is in section \ref{sec5}.

Parts (c) and (d) of Theorem \ref{theorem211} 
tell us that the equations \eqref{eq:muOmega1} and 
\eqref{eq:muOmega2} are satisfied provided
\begin{equation}  \label{eq:aint} 
	a = \frac{1-t}{(r+1)t} \end{equation}
which agrees with \eqref{eq:tina}.
Thus, as  already explained, Theorem \ref{theorem211}  shows that 
the image of $\Omega$ under inverse stereographic projection
is the droplet $D_{\sigma}$ on the unit sphere, 
and the pullback of $\mu^*$ is the motherbody $\sigma^*$. 

\begin{remark} \label{remark212}
The domain $\Omega_t$ increases with $t$
according to part (b) of Theorem \ref{theorem211}.
It is an instance of Laplacian growth 
(or Hele-Shaw flow) in the spherical metric.
We refer to \cite{GuTeVa14} and the references
therein for more on the interesting topic of Laplacian
growth and its many connections.

Figure \ref{figureOmega} contains a plot of 
$\Omega$ in the various cases for the value $r=2$.
It is interesting to note that in the BIS
case $\Omega$ coincides with the droplet in the normal matrix model
with a cubic potential, see e.g.\
\cite{BlKu12,ElFe05,TeBeAgZaWi05}.
The eigenvalues in this random matrix model tend
to the droplet with a uniform density (in contrast
to  \eqref{eq:muOmega} which is uniform in
the spherical metric), and the zeros
of related orthogonal polynomials tend to  
the motherbody. The limiting zero counting measure 
is characterized by a vector equilibrium problem in \cite{BlKu12} that is however different from the VEP of
Definition \ref{def21}, see also \cite{KuLo15}
for the case $r \geq 3$. Our VEP can be seen as a spherical analogue from the VEPs in \cite{BlKu12,KuLo15}.

In the normal matrix model with a cubic potential the
droplet grows up to a critical time and then cusps 
appear on the boundary of the droplet that cause
a breakdown of the model, (see however \cite{KuTo15, LeTeWi09, LeTeWi10, LeTeWi11} for continuations beyond
breakdown).
In our model there is no breakdown since the transition
to the $\TIS$ takes place before we reach  the cusp situation. 
\end{remark}

\section{Proof of Theorem \ref{theorem23}} \label{sec3}

\subsection{Proof of part (a)} \label{subsec31}

\subsubsection{A more general result} \label{subsec311}
Given the second component $\mu_2$ of the solution of
the VEP of Definition \ref{def21}, $\mu_1$
is the probability measure $\mu$ on $[0,\infty)$ that
minimizes $I(\mu) - I(\mu,\sigma)$ where
$\sigma = \mu_0 + \mu_2$ is a signed measure with
integral $\int \sigma = 2$.
Part (a) of 
Theorem \ref{theorem23} will  follow from the following
more general result, where it is important
that the negative part of $\sigma$ is a Dirac point
mass. Note that $\sigma$ used in this section is 
not related to $\sigma$ from \eqref{eq:sigma}.

\begin{proposition} \label{prop31}
Suppose $\sigma = - A \delta_{-q^{-1}} + \sigma^+$
where $A > 0$ and $\sigma^+$ is a measure
on $(-\infty,0]$ with $A < \int d\sigma^+ < \infty$.
Then there is a unique $\mu$ on $[0,\infty)$ that minimizes
\[ I(\mu) - I(\mu, \sigma) \]
among all measures on $[0,\infty)$  with $\int d\mu = 
\frac{1}{2} \int \sigma$. 
The support $\Sigma = \supp(\mu)$ 
takes one of the forms described in Theorem \ref{theorem23}~(a), 
namely $\Sigma$ is a bounded interval $[0,x_1]$ containing $0$, an
unbounded interval $[x_2,\infty)$ not containing $0$, 
the disjoint union of two intervals
$[0,x_1] \cup [x_2,\infty)$, or the full half-line $[0,\infty)$.
\end{proposition}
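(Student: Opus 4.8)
The plan is to reduce everything to a scalar equilibrium problem with external field and then analyze the support via the standard obstacle-problem / iterated-balayage machinery. First I would set up the external field: writing $\sigma = -A\delta_{-q^{-1}} + \sigma^+$, minimizing $I(\mu) - I(\mu,\sigma)$ over measures on $[0,\infty)$ with prescribed total mass $m = \tfrac12\int\sigma$ is the weighted energy problem with field $V(x) = -U^{\sigma}(x) = A\log|x+q^{-1}| - U^{\sigma^+}(x)$. I would check that $V$ is finite and real-analytic on $(0,\infty)$ (since $-q^{-1}\notin[0,\infty)$ and $\supp\sigma^+\subset(-\infty,0]$), and that the growth of $V$ at $0+$ and at $+\infty$ is mild enough — indeed $V(x) = (A - \int d\sigma^+)\log x + o(\log x)$ as $x\to\infty$ with $A - \int d\sigma^+ < 0$, which is exactly the weak-admissibility condition of \cite{HaKu12} once one balances against $\log(1+|x|^2)$ as in the proof of Lemma~\ref{lemma22}. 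Existence and uniqueness of the minimizer $\mu$ then follows from \cite[Corollary 2.7]{HaKu12}; alternatively one can cite \cite{SaTo97} directly after a compactification. This disposes of the existence/uniqueness claim.

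For the support structure the key point, emphasized in the statement, is that the negative part of $\sigma$ is a \emph{single} point mass. I would argue as follows. The variational (Euler–Lagrange) conditions give a constant $F$ (the modified Robin constant) with $2U^\mu + V = F$ on $\Sigma = \supp(\mu)$ and $2U^\mu + V \ge F$ on $[0,\infty)$. Introduce the function $g(x) = 2U^\mu(x) + V(x) - F$ on $\mathbb{R}\setminus(\supp\mu\cup\{-q^{-1}\})$; it is real-analytic there, vanishes on $\Sigma$, is $\ge 0$ on $[0,\infty)$, and $g''$ can be computed. The standard trick is to show that $\Sigma$ has at most two connected components. Equivalently, suppose $[0,\infty)\setminus\Sigma$ had two bounded components $(a_1,b_1)$ and $(a_2,b_2)$ plus possibly an unbounded one; on each bounded gap $g>0$ with $g=0$ at both endpoints, and one derives a contradiction from the sign/analyticity of $g$ together with the fact that all the "repulsion" in the gap region comes from the single pole at $-q^{-1}$, which lies to the left of $[0,\infty)$. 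Concretely, $g'$ is a function whose only singularity on $(0,\infty)$-relevant region is governed by $\tfrac{d}{dx}U^{\sigma^+}$ and the term $\tfrac{A}{x+q^{-1}}$; monotonicity of the latter and convexity/concavity estimates of $U^{\sigma^+}$ (it is the potential of a measure on the negative axis, hence $U^{\sigma^+}$ is harmonic — in one variable, its second derivative has a definite sign structure) force $g$ to have at most a bounded number of zeros. The cleanest route is the "$\le 2$ gaps" lemma: the density $\tfrac{d\mu}{dx}$ on the interior of $\Sigma$ is, up to the square-root endpoint factors, of the form $R(x)\prod (x-x_k)^{\pm 1/2}$ with $R$ built from the Cauchy transform of $\sigma$; counting the sign changes forced by positivity of $\mu$ and the single pole at $-q^{-1}$ bounds the number of endpoints by $4$, i.e. at most two intervals, with $0$ automatically an endpoint of the "left" interval because there is no part of $\sigma^+$ on $(0,\infty)$ to push mass away from $0$ unless competition from $-q^{-1}$ detaches it (the $\UIS$ case). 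That $\Sigma$ is a union of genuine closed intervals (no isolated points, no Cantor-type set) follows from real-analyticity of $V$ on $(0,\infty)$ via the standard regularity results of \cite{SaTo97}.

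I expect the main obstacle to be the rigorous "at most two intervals, and the only allowed configurations are the four listed" dichotomy — ruling out, say, three or more intervals, and showing that whenever $0\notin\Sigma$ the support must be a single unbounded interval (not two intervals both avoiding $0$). The natural tool is to represent the complex Cauchy transform $F_\mu(z) = \int \frac{d\mu}{z-x}$ and use that $2F_\mu - (\text{rational part of } V') $ extends to the two-sheeted (or here genus-controlled) Riemann surface defined by $\Sigma$; the requirement that this algebraic function have the correct single pole structure (coming from $-A\delta_{-q^{-1}}$ and the mass at infinity) and correct square-root branching caps the genus, hence the number of gaps. I would phrase this as: assume $\Sigma$ has $k$ bounded complementary intervals; then $y^2 = \prod(z - x_j)$ has $2k+\epsilon$ branch points and the meromorphic differential encoding $F_\mu$ has prescribed poles, and a residue/period count (as in the genus computation preceding Definition~\ref{def24}) forces $k \le 1$. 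The remaining bookkeeping — identifying which of $\BIS,\UIS,\TIS,\FIS$ occurs is parameter-dependent and is not claimed in the Proposition, only that $\Sigma$ is one of these four shapes — is then immediate, and the endpoint behavior (square-root vanishing at the "soft" edges $x_1$ in $\BIS,\TIS$ and $x_2$ in $\UIS,\TIS$) comes from the generic non-degeneracy of the algebraic equation, to be made precise when proving Theorem~\ref{theorem23}(d),(e).
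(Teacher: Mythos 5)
Your existence/uniqueness argument via weak admissibility is fine and matches the paper. The gap is in the support structure, and it is a real one.

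Your two proposed routes to ``at most two intervals'' both run into the same obstruction: in Proposition~\ref{prop31} the positive part $\sigma^+$ is an \emph{arbitrary} finite measure on $(-\infty,0]$, not a finite combination of point masses. Consequently $V'(x) = -\frac{d}{dx}U^\sigma(x)$ is $\frac{A}{x+q^{-1}} - \int \frac{d\sigma^+(s)}{x-s}$, which is generically \emph{not} a rational function, and the Cauchy transform $F_\mu$ does \emph{not} extend to a meromorphic function on any finite-sheeted compact Riemann surface. Your period/residue count ``forces $k\le 1$'' is therefore not available; the algebraic structure on $\mathcal R$ you allude to is only established \emph{later} in the paper, for the actual VEP minimizer, after Proposition~\ref{prop31} has already pinned down the supports. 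Using it here would be circular. Similarly, the heuristic that a single pole of $V'$ ``bounds the number of endpoints by $4$'' is false for general analytic $V$ (a polynomial $V$ of degree $d$ can produce $O(d)$ bands with no poles at all); what matters is not the number of poles of $V'$ but a much finer positivity structure.

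The missing key idea is the one the paper actually uses: write down the signed balayage $\nu = \frac12\Bal(\sigma,[0,\infty))$ explicitly, whose density on $(0,\infty)$ is $\frac{v(x)}{2\pi\sqrt x}$ with
$v(x) = \int_0^\infty \frac{d\rho(s)}{x+s} - \frac{A_1}{x+q^{-1}}$, $d\rho(s)=\sqrt s\,d\sigma^+(-s)$, and show $v$ has at most two zeros on $(0,\infty)$ by a Cauchy-determinant (total positivity) argument that crucially exploits the fact that the negative contribution is a \emph{single} kernel $\frac{1}{x+q^{-1}}$ interleaved between the supports of the positive contributions. This is Lemma~\ref{lemma32}, and it is where the hypothesis ``the negative part is one point mass'' is genuinely used. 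One then needs the companion monotonicity statements (Lemma~\ref{lemma33}: $v$ decreasing on the left band, $xv(x)$ increasing on the right band) so that the iterated balayage $\nu_{k+1} = \nu_k^+ - \Bal(\nu_k^-,\supp\nu_k^+)$ \emph{preserves} the ``$\le 2$ intervals of the prescribed shape'' property at every step (Lemma~\ref{lemma34}), giving the conclusion in the limit $\nu_k^+\to\mu$. Your proposal names iterated balayage but never invokes it, and replaces the total-positivity input by a vague ``convexity/concavity of $U^{\sigma^+}$'' and a Riemann-surface count, neither of which closes the argument.
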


The minimization problem in Proposition \ref{prop31} is again weakly admissible, and there is a unique minimizer
$\mu$. If we relax the condition that $\mu$ is a measure
and also allow signed measures, then the minimizer is
the balayage 
\begin{equation} \label{eq:theo2proof2}
\nu = \frac{1}{2} \Bal(\sigma, [0,\infty)) \end{equation}
which is known to have the density
\begin{equation} \label{eq:theo2proof3}
\frac{d\nu}{dx} =
\frac{1}{2\pi \sqrt{x}}
\int_{-\infty}^0 \frac{\sqrt{|s|}}{x-s} d\sigma(s),
\qquad 	0 < x < \infty. \end{equation}
If the density \eqref{eq:theo2proof3} happens
to be non-negative on $[0,\infty)$, then $\mu = \nu$
solves the minimization problem and $\supp \mu = [0,\infty)$.  

If $\nu$ is not a positive measure, then we use the
idea of iterated balayage \cite{Dr97,KuDr99}. 
This method is based on the  fact that $\mu \leq \nu^+$ where $\nu^+$ is the positive part of $\nu$ in its Jordan decomposition
\[ \nu = \nu^+ - \nu^-. \]
In particular $\supp(\mu) \subset \supp(\nu^+)$,
see \cite[Lemma 3]{KuDr99}. With this
information we can restrict the minimization problem 
to measures supported on $\supp(\nu^+)$, and if we 
also allow signed measures then the minimum is attained
by
\[ \nu^+ - \Bal(\nu^-, \supp(\nu^+)). \]
If this happens to be a positive measure then it is 
equal to $\mu$, and we can stop. Otherwise we repeat 
the above step, which leads to the following
iterative procedure.

We put $\nu_1 = \nu$, and iteratively for $k=1,2, \ldots$, we
write $\nu_k = \nu_k^+ - \nu_k^-$
where $\nu_k^+$ and $\nu_k^-$ are the positive and negative
parts of $\nu_k$, and we define
\begin{equation} \label{eq:lem32proof2} 
	\nu_{k+1} = \nu_k^+ - \Bal(\nu_k^-, \supp(\nu_k^+)). 
	\end{equation}
The convergence properties of the sequence $(\nu_k)_k$
are not fully understood, but in cases where we can control the
supports of the measures  we will have that
$\nu_k^- \to 0$ and $\nu_k^+ \to \mu$ as $k \to \infty$.

Under the conditions of Proposition \ref{prop31} we
can indeed control the supports, and we will show that 
for each $k$ the support of $\nu_k^+$ takes one of the forms
stated in the proposition, namely
$\supp(\nu_k^+)$ is either a bounded interval $[0,x_{1,k}]$,
an unbounded interval $[x_{2,k}, \infty)$, a union
of two intervals $[0,x_{1,k}] \cup [x_{2,k}, \infty)$,
or the full half-line $[0,\infty)$. 
Since the supports are decreasing if $k$ increases,
the two sequences $(x_{1,k})$ and $(x_{2,k})$ are either
finite (maybe even empty), or else they 
monotonically converge to limits
$x_1$ and/or $x_2$. In this way we will be able to show that 
$\supp(\mu)$ has one of the forms in the proposition.

In the first step we show that the support of
$\nu^+ = \nu_1^+$ has the required form. 

\subsubsection{First step: The support of $\nu^+$}
\label{subsec312}

For $\sigma$ as in the statement of Proposition \ref{prop31},
the density \eqref{eq:theo2proof3} of $\nu$  takes the form
\begin{equation} \label{eq:theo2proof4}
\frac{v(x)}{2\pi \sqrt{x}}, \quad \text{with} \quad
 v(x) = \int_{0}^{\infty} \frac{d\rho(s)}{x+s} - \frac{A_1}{x+q^{-1}},
\qquad 	0 < x < \infty, \end{equation}
where we put $A_1 = \ds \frac{A}{\sqrt{q}} > 0$ and $d\rho(s) = \sqrt{s} \, d\sigma^+(-s)$. 

\begin{lemma}  \label{lemma32}
In the above setting the following hold.
\begin{enumerate}
	\item[\rm (a)] $v$ has at most two zeros in $(0,\infty)$.
	\item[\rm (b)] There exist $0 \leq x_1 \leq x_2 \leq \infty$ such that
	\[ \supp(\nu^-) = [x_1,x_2] \quad \text{and} \quad
	\supp(\nu^+) = \overline{(0,x_1) \cup (x_2,\infty)}. \]
\end{enumerate}
\end{lemma}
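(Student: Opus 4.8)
The plan is to analyze the function $v$ in \eqref{eq:theo2proof4} directly, exploiting that the negative part of $\sigma$ is a single Dirac mass at $-q^{-1}$. First I would establish part (a). Rewrite the equation $v(x) = 0$ as
\[ \int_0^\infty \frac{d\rho(s)}{x+s} = \frac{A_1}{x+q^{-1}}, \]
and multiply through by $(x+q^{-1})$ to get
\[ g(x) := (x+q^{-1}) \int_0^\infty \frac{d\rho(s)}{x+s} = A_1. \]
The key observation is that $g$ is concave on $(0,\infty)$: indeed $(x+q^{-1})/(x+s) = 1 + (q^{-1}-s)/(x+s)$, so
\[ g(x) = \int d\rho + \int_0^\infty \frac{q^{-1}-s}{x+s}\, d\rho(s), \]
and differentiating twice under the integral sign gives $g''(x) = \int_0^\infty \frac{-2(q^{-1}-s)}{(x+s)^3}\, d\rho(s)$. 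This is not obviously signed, so concavity in that naive form fails; instead I would argue via the structure of the problem differently. The cleaner route: write $v(x)(x+q^{-1}) = \int_0^\infty \frac{(x+q^{-1})\,d\rho(s)}{x+s} - A_1$ and note that $x \mapsto \frac{x+q^{-1}}{x+s}$ is monotone in $x$ (increasing if $s < q^{-1}$, decreasing if $s > q^{-1}$). A robust argument for "at most two zeros" is to observe that $h(x) := v(x)\,(x+q^{-1})\prod(\text{denominators})$ — more precisely, since $v$ is a difference of a Stieltjes transform of a positive measure and a single pole term, $v$ is real-analytic on $(0,\infty)$ and its sign changes are controlled by the fact that $x v(x) \to \int d\rho - A_1 > 0$ as $x \to \infty$ (using $A < \int d\sigma^+$, hence $A_1 < \int d\rho$ after the substitution — this needs the weight $\sqrt{s}$ to be handled, so I would phrase the hypothesis as $A_1 < \int_0^\infty d\rho$, which is exactly $\frac{A}{\sqrt q} < \int \sqrt s \, d\sigma^+(-s)$ and follows from $A < \int d\sigma^+$ together with the point mass at $-q^{-1}$; I should double-check this inequality is what is actually available) and $v(x) \to -\infty \cdot$sign or $+\infty$ as $x \to 0+$. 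The decisive structural fact is that $\frac{d}{dx}\big[(x+q^{-1}) v(x)\big] = \int_0^\infty \frac{(s - q^{-1})\,d\rho(s)}{(x+s)^2}$ changes sign at most once in $x$ — this is because $(s-q^{-1})/(x+s)^2$, integrated against $d\rho$, is a function whose own derivative in $x$ is a single Stieltjes-type transform, so an induction on "number of sign changes of derivatives" bounds the zeros of $(x+q^{-1})v(x)$, hence of $v$, by two.

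Once part (a) is in hand, part (b) follows by tracking signs and using \eqref{eq:theo2proof4}, since $\frac{d\nu}{dx} = v(x)/(2\pi\sqrt x)$ has the same sign as $v(x)$ on $(0,\infty)$. I would determine the sign of $v$ near $0+$ and near $+\infty$: as $x \to \infty$, $x v(x) \to \int d\rho - A_1 > 0$, so $v > 0$ near infinity; as $x \to 0+$, $v(x) \to \int \frac{d\rho(s)}{s} - A_1 q$ (which may be $+\infty$ if $\int ds/s$ diverges, and in any case $v$ is positive near $0$ when this limit is positive, but could be negative — both possibilities are allowed and correspond to whether $0 \in \supp(\nu^+)$). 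Given at most two zeros $x_1 \le x_2$ in $(0,\infty)$, and $v > 0$ at $+\infty$, the only sign patterns compatible with "$v$ has a single negative excursion" are: $v > 0$ throughout (then $x_1 = x_2$, vacuous, $\supp(\nu^-)$ empty — the FIS case); $v < 0$ on $(0,x_1)$ and $>0$ on $(x_1,\infty)$ (UIS); $v > 0$ on $(0,x_1)$, $<0$ on $(x_1,x_2)$, $>0$ on $(x_2,\infty)$ (BIS or TIS depending on whether the left piece is nonempty). In every case $\supp(\nu^-) = [x_1,x_2]$ (a single interval, possibly degenerate or empty) and $\supp(\nu^+) = \overline{(0,x_1)\cup(x_2,\infty)}$, which is the claim. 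I would also remark that $v$ cannot be negative near both $0$ and $\infty$ (that would force a positive excursion in between, hence a third zero with the outer two, but more simply: $v>0$ at $\infty$ is forced), and it cannot have two disjoint negative excursions (that would need four zeros).

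The main obstacle is part (a): rigorously proving that $v$ has at most two zeros on $(0,\infty)$. The cleanest formulation of the argument is, I believe, the following descent: let $w(x) = (x + q^{-1})\,v(x)$, so $w$ is smooth on $(0,\infty)$ and $w = A_1$ exactly where $v = 0$ — no, that is backwards; rather zeros of $v$ are zeros of $w$ (since $x+q^{-1}>0$). We have $w(x) = \int_0^\infty \frac{(x+q^{-1})\,d\rho(s)}{x+s} - A_1$ and $w'(x) = \int_0^\infty \frac{(s-q^{-1})\,d\rho(s)}{(x+s)^2}$. Now $w'$ is, up to sign conventions, the difference of two completely monotone functions: split $d\rho$ at $s = q^{-1}$ into $d\rho_< + d\rho_>$; then $w'(x) = \int_{s>q^{-1}} \frac{(s-q^{-1})\,d\rho(s)}{(x+s)^2} - \int_{s<q^{-1}} \frac{(q^{-1}-s)\,d\rho(s)}{(x+s)^2}$, a difference of two functions each of which is a Stieltjes transform (of a positive measure) and hence completely monotone and, crucially, each of which has all derivatives of one sign — so $w'$ changes sign at most once by the standard fact that a difference of a decreasing and... actually a difference of two completely monotone functions can change sign many times in general, so even this needs care. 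The genuinely safe path is: $w'(x)(x+?)$... I expect the correct tool is that $\frac{1}{(x+s)^2}$, as $s$ ranges over $(0,\infty)$, forms a Chebyshev system in a suitable sense, or to use the substitution reducing to rational functions when $\rho$ is discrete and then a density/limiting argument. I would allocate most of the write-up to nailing this "at most two sign changes" claim — likely via differentiating the equation $v=0$ to $\int \frac{d\rho(s)}{x+s} = \frac{A_1}{x+q^{-1}}$ and comparing the number of intersections of the strictly convex, strictly decreasing function $x \mapsto \frac{A_1}{x+q^{-1}}$ with the Stieltjes transform $x \mapsto \int \frac{d\rho(s)}{x+s}$, which is also strictly decreasing and strictly convex, and arguing that two such convex decreasing curves with matching behavior at $0$ and $\infty$ (both $\to$ const or $+\infty$; both $\to 0$) cross at most twice — this last claim is the crisp, provable core (two strictly convex functions meet in at most two points only if one minus the other is strictly convex, which fails here, so one needs instead: the ratio is monotone, or a more delicate argument). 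In the write-up I would commit to the ratio-monotonicity route: show $x \mapsto (x+q^{-1}) \int_0^\infty \frac{d\rho(s)}{x+s}$ is strictly increasing then strictly decreasing (unimodal) — equivalently $w'$ has exactly one sign change — by noting $w'(x) = \int_0^\infty \frac{(s-q^{-1})}{(x+s)^2}\,d\rho(s)$ and that $(x+s)^{-2}$ is totally positive in $(x,s)$, so the sign of $w'$ is determined by a variation-diminishing principle: the kernel has at most one sign change in $s$ (at $s = q^{-1}$, coming from the factor $s - q^{-1}$), hence $w'$ has at most one sign change in $x$. That variation-diminishing argument is exactly the right level of rigor and is the step I would write out carefully; everything after it is sign-chasing.
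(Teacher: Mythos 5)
Your argument for part (a), in its final form, is sound and takes a genuinely different route from the paper. You pass to $w(x)=(x+q^{-1})v(x)$, compute $w'(x)=\int_0^\infty \frac{s-q^{-1}}{(x+s)^2}\,d\rho(s)$, and invoke the variation-diminishing property of the totally positive kernel $(x+s)^{-2}$ to conclude that $w'$ has at most one sign change, hence $w$ (and so $v$) has at most two zeros. The paper instead applies a Cauchy-determinant argument directly to three hypothetical zeros $x_0<x_1<x_2$ of $v$, splitting $v=f_0-f_1+f_2$ at $s=q^{-1}$, so that $\det[f_k(x_j)]>0$ is a contradiction; the paper's route is more self-contained (the determinant is computed explicitly) while yours outsources the work to Karlin's variation-diminishing theory, which you invoke but do not prove.

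There is, however, a genuine gap in part (b), and it is the part that actually requires an idea. You assert that $xv(x)\to\int d\rho - A_1>0$ as $x\to\infty$, hence $v>0$ near infinity, claiming this follows from $A<\int d\sigma^+$. It does not: $\int d\rho = \int\sqrt{s}\,d\sigma^+(-s)$ and $A_1 = A/\sqrt q$, and there is no inequality between these coming from $A<\int d\sigma^+$ when $\sigma^+$ has mass near $0$. In fact $v<0$ near infinity is exactly the configuration $x_2=\infty$, i.e.\ $\supp(\nu^+)=[0,x_1]$ bounded (the $\BIS$ regime), which is explicitly allowed by the lemma and does occur. Consequently your case enumeration is incomplete --- the pattern ``$v>0$ on $(0,c)$, $v<0$ on $(c,\infty)$'' is missing --- and your two stated reasons for excluding ``$v<0$ near both ends'' both fail: that pattern gives two sign changes, not three zeros, and the sign at infinity is not forced. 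The actual obstruction the paper must overcome is precisely to rule out $v$ being negative on $(0,x_1)\cup(x_2,\infty)$, which it does by a perturbation: adding $\epsilon\,\delta_{-\epsilon^2}$ to $\sigma$ replaces $v$ by $v_\epsilon(x)=v(x)+\frac{\epsilon}{x+\epsilon^2}$, part (a) still applies, $v_\epsilon(0)>0$ for small $\epsilon$, so the negative set of $v_\epsilon$ is a single interval, and letting $\epsilon\to 0+$ contradicts two disjoint negative intervals for $v$. Your proposal has no substitute for this step, so part (b) is not established.
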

\begin{proof}
(a) Suppose, to get
	a contradiction, that $0 < x_0 < x_1 < x_2 < \infty$
	are three zeros of $v$.
Let $s_1 = q^{-1}$ and write $v = f_0 - f_1 + f_2$ with
\begin{equation} \label{eq:lem31proof1} 
	f_0(x) =  \int_0^{s_1} \frac{d\rho(s)}{x+s}, \quad
	f_1(x) = \frac{A}{x+s_1}, \quad
f_2(x) = \int_{s_1}^{\infty} \frac{d\rho(s)}{x+s}. 
\end{equation}
 Then by \eqref{eq:lem31proof1} and the multilinearity of the determinant
\begin{equation} \label{eq:lem31proof2}	
\det \left[ f_k(x_j) \right]_{j,k=0}^2
		= A_1 \int_0^{s_1} d\rho(s_0) \int_{s_1}^{\infty} d\rho(s_2)
			\det \left[ \frac{1}{x_j+s_k} \right]_{j,k=0}^2. 
			\end{equation}
There is an explicit formula for the determinant
(Cauchy determinant) 
\[		\det \left[ \frac{1}{x_j+s_k} \right]_{j,k=0}^2
	= \frac{ \prod\limits_{0 \leq j<k \leq 2} (x_k-x_j) 
		\prod\limits_{0 \leq j<k \leq 2} (s_k-s_j)}{\prod\limits_{j=0}^2 \prod\limits_{k=0}^2 (x_j+s_k)}. \] 
In the integral in \eqref{eq:lem31proof2} we have $0 < s_0 < s_1 < s_2$, and since also  $0 < x_0 < x_1 < x_2$, we see that the
Cauchy determinant is $>0$. Since $A_1 > 0$ it follows
from \eqref{eq:lem31proof2}
that $\det [f_k(x_j)] > 0$ and the matrix
$[f_k(x_j)]_{j,k=0}^2$ is invertible. 

However, since $v = f_0 - f_1 + f_2$ by \eqref{eq:theo2proof4} and \eqref{eq:lem31proof1}, 
and since $v(x_j) = 0$ for $j=0,1,2$, it follows that
\[ \begin{pmatrix}
	f_0(x_0) & f_1(x_0) & f_2(x_0) \\
	f_0(x_1) & f_1(x_1) & f_2(x_1) \\
	f_0(x_2) & f_1(x_2) & f_2(x_2) \end{pmatrix}
	 \begin{pmatrix} 1 \\ -1 \\ 1 \end{pmatrix} = 
	\begin{pmatrix} v(x_0) \\ v(x_1) \\ v(x_2) \end{pmatrix} = \begin{pmatrix} 0 \\ 0 \\ 0
	\end{pmatrix}  \]
and this is a contradiction, since the matrix is invertible.

\medskip

(b) From part (a) we know that $v$ has at most two zeros
in $(0,\infty)$. By continuity, $v$ can also 
have at most two sign changes in $(0,\infty)$. 

If $v$ has no sign changes then $v \geq 0$ on $(0,\infty)$,
since due to the fact that $\sigma$ has the density
\eqref{eq:theo2proof4} and
\begin{equation} \label{eq:theo2proof5} 
\int d\sigma = \int_0^{\infty} \frac{v(x)}{2\pi \sqrt{x}} dx > 0 \end{equation}
it cannot be fully $\leq 0$.
Then we take $x_2 = x_1$ in the lemma.

If $v$ has one sign change, say at $x^* >0$, and if there
is no other zero of $v$, then $v$ is
either $> 0$ on $(0,x^*)$ and $< 0$ on $(x^*,\infty)$, or  vice versa.  
In the former case we take $x_1 = x^*$
and $x_2 = \infty$ and in the latter case we take $x_1 = 0$
and $x_2 = x^*$.
If there is another zero in $(0,\infty)$, then the
inequality is not strict at this one zero, but
we still take $x_1$ and $x_2$ as above, and the conclusion
of part (b) holds true if $v$ has one sign change. 
[It is actually not possible that there is  
another zero, but we do not need this fact.]

If $v$ has two sign changes, say at $0 < x_1 < x_2 < \infty$, then 
$v$ is either positive on $(0, x_1)$, negative  on $(x_1, x_2)$, and
positive again on $(x_2,\infty)$, or the other way around
negative  on $(0,x_1)$, positive on $(x_1,x_2)$ and negative on $(x_2,\infty)$.
[Now we can be sure that the inequalities are strict since there are no more than two zeros by part (a).]
The latter possibility cannot happen, which we can see by 
adding $\epsilon \delta_{-\epsilon^2}$ to $\nu_0$ for some small $\epsilon >0$.
Part (a) continues to apply and it follows that
\begin{equation} \label{eq:vepsilon} 
 v_{\epsilon}(x) = v(x) + \frac{\epsilon}{x+\epsilon^2} \end{equation}
has at most two sign changes on $(0,\infty)$. Since $v_{\epsilon}(0) > 0$ 
for sufficiently small $\epsilon > 0$, the set where  $v_{\epsilon} < 0$ is
then at most a single interval.
Letting $\epsilon \to 0+$, we then arrive at a contradiction in case
$v$ is negative on $(0,x_1) \cup (x_2, \infty)$.

This proves part (b) of the lemma in all cases.
\end{proof}

\subsubsection{Second step: Monotonicity  of $v$ on 
$\supp(\nu^+)$} \label{subsec313}

In order to make the induction step in the iterated
balayage argument that follows 
we need the following behavior of $v$ on the parts
where it is positive.

\begin{lemma} \label{lemma33}
Under the same conditions as in Lemma \ref{lemma32}
where we let $x_1, x_2$ be as in part (b) of Lemma \ref{lemma32}, the following hold.
\begin{enumerate}
\item[\rm (a)] $v'$ and $(xv)'$ have at most two
zeros in $(0,\infty)$.
\item[\rm (b)] If $0 < x_1 < x_2$ then $x \mapsto v(x)$
is strictly decreasing for $ x \in (0, x_1)$.
\item[\rm (c)] If $x_1 < x_2 < \infty$ then
$x \mapsto x v(x)$ is strictly increasing for $x \in (x_2, \infty)$. 
\end{enumerate}	 
\end{lemma}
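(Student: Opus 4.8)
\emph{Part (a).} The plan here is to rerun the proof of Lemma~\ref{lemma32}(a) with the kernel $(x+s)^{-2}$ in place of $(x+s)^{-1}$. Differentiating \eqref{eq:theo2proof4} (and using $\tfrac{x}{x+s}=1-\tfrac{s}{x+s}$) gives
\[
 v'(x)=-\int_0^\infty\frac{d\rho(s)}{(x+s)^2}+\frac{A_1}{(x+q^{-1})^2},\qquad
 (xv)'(x)=\int_0^\infty\frac{s\,d\rho(s)}{(x+s)^2}-\frac{A_1q^{-1}}{(x+q^{-1})^2}.
\]
Up to sign, each of these has exactly the form of $v$ in \eqref{eq:theo2proof4}: a transform of a positive measure against $(x,s)\mapsto(x+s)^{-2}$, minus a single point mass at $s=q^{-1}$ with strictly positive weight. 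The kernel $(x+s)^{-2}$ is strictly totally positive on $(0,\infty)^2$ --- as $(x+s)^{-2}=\int_0^\infty u\,e^{-ux}e^{-us}\,du$ exhibits it as a positive superposition of rank-one totally positive kernels --- so $\det\big[(x_j+s_k)^{-2}\big]_{j,k=0}^2>0$ for $0<x_0<x_1<x_2$, $0<s_0<s_1<s_2$. Then the argument of Lemma~\ref{lemma32}(a) applies verbatim: three zeros in $(0,\infty)$ would, after splitting the positive part at $s_1=q^{-1}$ and using multilinearity of the determinant, produce a $3\times3$ matrix that annihilates $(1,-1,1)^T$ yet has strictly positive determinant --- a contradiction (and if $\rho$ charges only one side of $q^{-1}$, a $2\times2$ version gives an even smaller bound).

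\emph{Parts (b) and (c), main line.} I would read these off from (a) together with the sign pattern of Lemma~\ref{lemma32}(b): $v>0$ on $(0,x_1)$, $v<0$ on $(x_1,x_2)$, $v>0$ on $(x_2,\infty)$ (the last interval empty if $x_2=\infty$); also $v$ is real analytic on $(0,\infty)$, not identically $0$ by \eqref{eq:theo2proof5}, and $v(x)\to0$ as $x\to\infty$. Each sign change of $v$ is a simple zero, so $v'(x_1)<0$, and when $x_2<\infty$ also $v'(x_2)>0$, $(xv)'(x_1)<0$, $(xv)'(x_2)>0$. In the generic configurations --- $x_2<\infty$ for (b), $x_1>0$ for (c) --- the zero count closes at once. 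For (b): $v$ has an interior minimum in $(x_1,x_2)$ and an interior maximum in $(x_2,\infty)$, so $v'$ has a zero in each; by (a) it then has none in $(0,x_1)$, and since $v'<0$ just left of $x_1$, $v'<0$ on all of $(0,x_1)$. For (c) with $x_1>0$, symmetrically $xv$ has an interior maximum in $(0,x_1)$ and an interior minimum in $(x_1,x_2)$, so $(xv)'$ has two zeros in $(0,x_2)$ and none in $(x_2,\infty)$; with $(xv)'(x_2)>0$ this forces $(xv)'\ge0$ on $(x_2,\infty)$ with only isolated zeros, so $xv$ is strictly increasing there.

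\emph{The hard part} will be the degenerate configurations $x_2=\infty$ in (b) and $x_1=0$ in (c) (exchanged by $x\mapsto1/x$), where only one of the two forced interior extrema is available and the count alone tolerates a spurious sign change; here one must control $v'$ (resp.\ $(xv)'$) near the ``open'' endpoint, which I would do with the moments $p_k:=\int s^k\,d\rho(s)\in(0,\infty]$. For $x_2=\infty$ in (b), the sign pattern forces $p_{-1}=v(0^+)+A_1q\ge A_1q$ and $p_0-A_1=\lim_{x\to\infty}xv(x)\le0$, so Cauchy--Schwarz $p_{-1}^2\le p_{-2}p_0$ gives $v'(0^+)=A_1q^2-p_{-2}\le A_1q^2-(A_1q)^2/A_1=0$ (with the convention $v'(0^+)=-\infty$ if $p_{-2}=\infty$), and equality throughout would force $\rho=A_1\delta_{q^{-1}}$, hence $v\equiv0$, which is excluded; so $v'(0^+)<0$. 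Combined with $v'(x_1)<0$ and the single forced zero of $v'$ in $(x_1,\infty)$ (an interior minimum of $v$), a positive value of $v'$ anywhere on $(0,x_1)$ would give three zeros, contradicting (a); hence $v'\le0$ on $(0,x_1)$, so $v$ is strictly decreasing there. For $x_1=0$ in (c), symmetrically $p_{-1}\le A_1q$, $p_0\ge A_1$, so $p_1\ge p_0^2/p_{-1}\ge A_1q^{-1}$ by Cauchy--Schwarz, strictly (equality again forces $v\equiv0$); thus $m_1:=p_1-A_1q^{-1}>0$, and $x^2(xv)'(x)\to m_1>0$ (or $+\infty$) as $x\to\infty$, so $(xv)'>0$ for large $x$. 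With $(xv)'(x_2)>0$ and the forced zero of $(xv)'$ in $(0,x_2)$, this rules out a sign change of $(xv)'$ on $(x_2,\infty)$ by (a), so $xv$ is strictly increasing there. The essential realization is that, beyond (a) and Lemma~\ref{lemma32}(b), all that is needed is this elementary moment inequality for $\rho$, available precisely because the negative part of $\sigma$ is a single point mass.
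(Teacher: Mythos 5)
Your proof is correct, but it takes a genuinely different route from the paper's on several points, so a comparison is worthwhile.

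For part (a), the paper reduces to a $3\times 3$ Cauchy-type determinant $\det\left[(x_k+s_j)^{-2}\right]$ and verifies its sign by a symbolic (Maple) factorization into a Vandermonde-like product times a polynomial with positive coefficients. You instead invoke strict total positivity of $(x,s)\mapsto(x+s)^{-2}$ via the integral representation $(x+s)^{-2}=\int_0^\infty u\,e^{-xu}e^{-su}\,du$. This is a more conceptual and self-contained argument; the only quibble is terminological: $u\,e^{-xu}e^{-su}$ is a rank-one kernel in $(x,s)$ for each fixed $u$, and rank-one kernels have vanishing $2\times 2$ minors, so ``superposition of rank-one totally positive kernels'' is not literally what gives the determinantal positivity. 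What you actually need is the basic composition formula together with the fact that $e^{-xu}$ is an extended totally positive kernel in the two variables $(x,u)$, so that $\det\left[(x_j+s_k)^{-2}\right]$ becomes a positive integral of products of two positive exponential determinants. The conclusion is the same, but the justification deserves that one extra sentence.

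For parts (b) and (c), the paper handles all configurations at once with the $\varepsilon$-perturbation $v_\varepsilon(x)=v(x)+\varepsilon/(x+\varepsilon^2)$: part (a) applies to $v_\varepsilon$, the perturbation makes $v_\varepsilon'(0)$ very negative, and one passes to the limit $\varepsilon\to0^+$; this is the same device already used in Lemma~\ref{lemma32}(b), so it gets reused. Your proof instead splits into a generic case (three nonempty intervals, where the two forced interior extrema exhaust the zero budget of $v'$ or $(xv)'$) and a degenerate case ($x_2=\infty$ for (b), $x_1=0$ for (c)), which you settle by explicit moment inequalities for $\rho$ combined with Cauchy--Schwarz to pin down the sign of $v'(0^+)$ or the asymptotics of $(xv)'$ at infinity. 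This is more elementary and also quite illuminating in that it isolates exactly what use is made of the hypothesis that $\sigma^-$ is a single point mass. Your generic-case bookkeeping is sound (in particular $v'(x_1)<0$, $v'(x_2)>0$ etc.\ do follow, since a higher-order zero of $v$ at an endpoint would create a third zero of $v'$ or of $(xv)'$, contradicting (a)); in the degenerate case the argument closes with $v'\le0$ on $(0,x_1)$ rather than $v'(x_1)<0$ directly, but real analyticity upgrades this to strict monotonicity exactly as you note. So: a correct alternative proof, arguably cleaner on (a) and longer but more explicit on (b)--(c).
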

\begin{proof}
(a) The proof is similar to the proof of part (a) of
Lemma \ref{lemma32}. Note that from \eqref{eq:theo2proof4}
\begin{equation} \label{eq:vprime} v'(x)
= -\int_0^{\infty} \frac{d\rho(s)}{(x+s)^2}
+ \frac{A_1}{(x+q^{-1})^2}. \end{equation}
We let $s_1 = q^{-1}$ and write $v' = f_0' - f_1'  + f_2'$ with $f_0, f_1, f_2$
as in \eqref{eq:lem31proof1} and then
\begin{equation} \label{eq:lem31proof3} 
\det \left[ f_j'(x_k) \right]_{j,k=0}^2
= - A_1 \int_0^{s_1} d\rho(s_0) \int_{s_1}^{\infty} d \rho(s_2)
\det \left[ \frac{1}{(x_k+s_j)^2} \right]_{j,k=0}^2. \end{equation} 
A Maple calculation shows that 
\begin{multline*} 
\det\left[ \frac{1}{(x_k+s_j)^2} \right]_{j,k=0}^2 
= \frac{\prod\limits_{0 \leq j < k \leq 2} (x_k-x_j)
	\prod\limits_{0 \leq j < k \leq 2} (s_j-s_i)}
{\prod\limits_{j=0}^2 \prod\limits_{k=0}^2  (x_k + s_j)^2}
\\ \times 
P(x_0, x_1, x_2; s_0, s_1, s_2) \end{multline*}
where $P$ is a homogeneous degree six polynomial in
the six variables whose coefficients (with respect to the
monomial basis) are all positive. Thus $P > 0$ when
all its arguments are $>0$, and it follows that 
\eqref{eq:lem31proof3} is negative, and in particular non-zero, 
whenever $0 < x_0 < x_1 < x_2$. Thus $v'$ cannot have more than two positive zeros.

The proof for $(xv)'$ is similar, since 
\[ (xv)'= \int_0^{\infty} \frac{sd\rho(s)}{(x+s)^2}
- \frac{A_1}{q (x+q^{-1})^2} \]
which has a similar form as \eqref{eq:vprime},
and the same argument applies.

\medskip
(b) Suppose $0 < x_1 < x_2$. Then $v(x_1) = 0$ and
$v(x) \geq 0$ for $x \in (0,x_1)$ by Lemma \ref{lemma32} (b). 
Also $v(x) \leq 0$ for $x \in (x_1, x_2)$.

Since $v(x) \to 0$ as $x \to \infty$, there
is a global negative minimum,  say at $x^* > x_1$, 
where the derivative vanishes and changes sign.
According to part (a), there is at most one other 
sign change of the derivative. If
this were in the interval $(0,x_1)$ then it would correspond to a local maximum of $v$ on the interval $(0,x_1)$. Then we modify
$v$ to $v_{\epsilon}$ as in \eqref{eq:vepsilon} in the proof of Lemma \ref{lemma32}. Part (a) applies 
to $v_{\epsilon}$ and it follows that $v_{\epsilon}'$ has
at most two sign changes. For $\epsilon > 0$ small enough
one sign change is close to $x^*$,  say at $x^*(\epsilon)$,
and $v_{\epsilon}$ has its global minimum there. Since $v_{\epsilon}'(0) < 0$ for $\epsilon > 0$ sufficiently small,
there can be no sign
change of $v_{\epsilon}'$ in $(0,x^*(\epsilon))$, and letting $\epsilon \to 0+$
we find that $v'$ has no sign change in $(0, x^*)$.
Thus $v$ is strictly decreasing in $(0,x_1)$
as claimed in part (b).

\medskip
(c) The proof for part (c) is similar. Suppose $x_1 < x_2 < \infty$, so that $v(x_2) = 0$ and $v(x) \geq 0$
for $x \in (x_2, \infty)$ by Lemma \ref{lemma32} (b). 

Since $xv(x) \to 0$ as $x \to 0$, there is a global
minimum of $x \mapsto xv(x)$, at $x^* \in (x_1, x_2)$ say,
where the derivative is zero and changes sign. There is
at most one more zero by part (a). If there were a sign
change of $(x v)'$ in  $(x_2,\infty)$, then that would
give us a maximum of $x \mapsto xv(x)$ on
$(x_2, \infty)$. We again modify $v$ to $v_{\epsilon}$
as in \eqref{eq:vepsilon}. Part (a), applied to $v_{\epsilon}$,
tells us that $(x v_{\epsilon})'$ has at most two sign
changes on $(0,\infty)$. For small $\epsilon > 0$
one sign change is close to $x^*$, say at $x^*(\epsilon)$
where $xv_{\epsilon}(x)$ has its global minimum.
Since 
\[ \lim_{x \to \infty} (x v_{\epsilon}(x))' > 0 \]
for small enough $\epsilon > 0$, this derivative then has
no sign change in $(x^*(\epsilon), \infty)$ and
therefore $x v_{\epsilon}(x)$ is strictly increasing in $(x^*(\epsilon), \infty)$. Letting $\epsilon \to 0+$ 
it follows that $x v(x)$ increases strictly  in $(x^*,\infty)$,
and a fortiori in $(x_2,\infty)$.
\end{proof}

\subsubsection{Third step: Iterated balayage} \label{subsec314}

In the final step 
we use the iterated balayage to complete the proof
of Proposition \ref{prop31}. 
We take $\nu_1 = \nu$ where $\nu$ is the signed
measure on $[0,\infty)$ with density 
\eqref{eq:theo2proof3}. 
If $\nu_1 \geq 0$ then $\nu_1 = \mu$
and we are in the full interval support ($\FIS$) case $\Sigma = [0,\infty)$.
	
In the rest of the proof we assume that $\nu_1$ is not 
a positive measure. Then
iteratively we construct the sequence $(\nu_k)_k$
as in \eqref{eq:lem32proof2}. 
Inductively we then have $\int d\nu_k = \int d\nu_1
= \frac{1}{2}  \int \sigma$ and 
$\mu \leq \nu_k^+$ for every $k$, and 
in particular
\[ \Sigma \subset \supp(\nu_k^+). \]
The sequence $(\int d\nu_k^+)_k$ decreases and if
$\int d\nu_k^-$ tends to $0$ as $k \to \infty$,
then  $\nu_k^+ \to \mu$ in the sense
of weak$^*$ convergence of measures on $[0,\infty]$.
In the present situation (with the help of Lemmas \ref{lemma32} and \ref{lemma33})
we can prove that this is indeed the case.

\begin{lemma} \label{lemma34}
	For every $k$ we have   
	\begin{enumerate}
		\item[\rm (a)] 
		$\supp(\nu_k^+) = \overline{(0, x_{1,k})}	
		\cup \overline{(x_{2,k}, \infty)}$ for some 
		$0 \leq x_{1,k} < x_{2,k} \leq \infty$, while
		$\supp(\nu_k^-) \subset [x_{1,k}, x_{2,k}]$, and
		\item[\rm (b)] $\nu_k$ has a density $\frac{v_k(x)}{2 \pi \sqrt{x}}$
		where $x \mapsto v_k(x)$ strictly decreases on $(0, x_{1,k})$
		and $x \mapsto x v_k(x)$ strictly increases on $(x_{2,k}, \infty)$. 
	\end{enumerate} 
\end{lemma}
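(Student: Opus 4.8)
The plan is to prove Lemma~\ref{lemma34} by induction on $k$. The base case $k=1$ is precisely Lemmas~\ref{lemma32} and~\ref{lemma33}: $\nu_1=\nu$ has density $\frac{v_1(x)}{2\pi\sqrt x}$ with $v_1=v$ as in \eqref{eq:theo2proof4}, Lemma~\ref{lemma32}(b) gives the support structure with $x_{1,1}=x_1<x_2=x_{2,1}$ (strict, since $\nu^-$ is a non-trivial absolutely continuous measure), and Lemma~\ref{lemma33}(b),(c) give the monotonicity (vacuously when $x_1=0$ or $x_2=\infty$).

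For the induction step, assume the statement for $k$, write $S_k=\supp(\nu_k^+)=[0,x_{1,k}]\cup[x_{2,k},\infty)$ (a piece absent when the corresponding $x_{i,k}$ equals $0$ or $\infty$), and assume $\nu_k^-\neq0$ (otherwise $\nu_{k+1}=\nu_k$ and there is nothing to prove). First I would record the identity $\nu_{k+1}=\tfrac12\Bal(\sigma,S_k)$, which follows from the semigroup property of balayage together with its linearity in signed measures: iterating $\nu_k=\tfrac12\Bal(\sigma,S_{k-1})$ gives $\tfrac12\Bal(\sigma,S_k)=\Bal(\nu_k,S_k)=\nu_k^+-\Bal(\nu_k^-,S_k)=\nu_{k+1}$, using $\Bal(\nu_k^+,S_k)=\nu_k^+$ since $\supp(\nu_k^+)=S_k$. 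Next I would write the density of $\tfrac12\Bal(\sigma,S_k)$ on the interior of each component of $S_k$, using the classical formula for the balayage of a point mass onto a finite union of intervals in terms of $R_k(z)=\sqrt{z(z-x_{1,k})(z-x_{2,k})}$ (the branch analytic off $S_k$). In the $\BIS$ and $\UIS$ cases the complement of $S_k$ is simply connected and the formula carries no extra polynomial factor; absorbing $|R_k(-s)|$ into a new positive measure $\tilde\rho$ on $(0,\infty)$ and $|R_k(-q^{-1})|$ into a new positive constant $\tilde A$, one finds that on each component $\nu_{k+1}$ has density $\frac{\tilde v_{k+1}(x)}{2\pi|R_{k,+}(x)|}$ with $\tilde v_{k+1}$ of the exact form \eqref{eq:theo2proof4}. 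In the $\TIS$ case the complement is doubly connected and the balayage formula acquires an extra real factor $|x-\lambda(c)|$ with $\lambda(c)\in[x_{1,k},x_{2,k}]$ fixed by the period condition over the gap; expanding $\lambda(c)-x=(\lambda(c)-c)-(x-c)$ then shows $\nu_{k+1}$ has density $\frac{\pm\tilde v_{k+1}(x)}{2\pi|R_{k,+}(x)|}$ on each component, where now $\tilde v_{k+1}(x)=\int\frac{d\tilde\rho(s)}{x+s}-\frac{\tilde A}{x+q^{-1}}+C_0$ carries an additive constant $C_0$, and $\tilde v_{k+1}$ has opposite signs at $x_{1,k}$ and at $x_{2,k}$.

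The third step transfers the analysis. Lemmas~\ref{lemma32}(a) and~\ref{lemma33}(a) are statements purely about functions of the form \eqref{eq:theo2proof4}, so they apply verbatim to $\tilde v_{k+1}$ in the $\BIS$/$\UIS$ cases, and with a mild extension in the $\TIS$ case (a $4\times4$ version of the Cauchy determinant, with the constant function adjoined as a fourth row, turning ``at most two'' into ``at most three''); the same type of determinant computation, carried out as in the proof of Lemma~\ref{lemma33}(a) but for the explicit balayage density, also bounds the number of zeros of $v_{k+1}'$ and of $(xv_{k+1})'$ on each component. I would then combine these counts with: (i) the boundary behaviour of the balayage density, which blows up like $(x_{1,k}-x)^{-1/2}$ as $x\to x_{1,k}^-$ and like $(x-x_{2,k})^{-1/2}$ as $x\to x_{2,k}^+$ (because $x_{1,k},x_{2,k}$ are endpoints of $S_k$ while $\nu_k^+$ vanishes there), forcing $v_{k+1}(x_{1,k}^-)<0$, $v_{k+1}(x_{2,k}^+)<0$, and hence $x_{1,k+1}<x_{1,k}$, $x_{2,k+1}>x_{2,k}$; (ii) the positivity of the total mass $\int d\nu_{k+1}=\tfrac12\int d\sigma>0$; and (iii) the perturbation trick of Lemma~\ref{lemma32}(b), adding $\epsilon\delta_{\epsilon^2}$ to $\tilde\rho$ and letting $\epsilon\to0^+$, to exclude a ``floating'' positive piece $[x_a,x_b]$ with $x_a>0$ in either support component. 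This yields $\supp(\nu_{k+1}^+)=[0,x_{1,k+1}]\cup[x_{2,k+1},\infty)$ with $\supp(\nu_{k+1}^-)\subset[x_{1,k+1},x_{2,k+1}]$, which is (a); and the monotonicity (b) follows on $(0,x_{1,k+1})$ and on $(x_{2,k+1},\infty)$ as in Lemma~\ref{lemma33}(b),(c) --- from the bound on the zeros of $v_{k+1}'$ (resp.\ $(xv_{k+1})'$), the fact that the relevant global extremum lies in the region where $v_{k+1}<0$, and the same $\epsilon\delta_{\epsilon^2}$ perturbation.

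The main obstacle is the $\TIS$ case. The doubly connected geometry forces the factor $|x-\lambda(c)|$ into the balayage kernel and hence the constant $C_0$ into $\tilde v_{k+1}$, so the clean ``at most two zeros'' of Lemmas~\ref{lemma32}(a), \ref{lemma33}(a) weakens to ``at most three''; to still conclude that each support component contributes at most one sign change, one uses that the opposite signs of $\tilde v_{k+1}$ at $x_{1,k}$ and $x_{2,k}$ spend one of the three admissible zeros inside the gap, after which (ii) and (iii) dispose of the remaining two. A secondary, calculation-heavy point is the zero-counting for the derivatives $v_{k+1}'$ and $(xv_{k+1})'$: it requires a Maple-type determinant identity analogous to the one in the proof of Lemma~\ref{lemma33}(a), now applied to the density of $\tfrac12\Bal(\sigma,S_k)$ with all its algebraic factors; I expect this to go through by the same total-positivity mechanism, but it is the step requiring the most computation.
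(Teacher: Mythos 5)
Your route is genuinely different from the paper's, and the difference matters. You want to iterate the Cauchy-determinant zero-counting machinery of Lemmas~\ref{lemma32}--\ref{lemma33} at every step, by first showing $\nu_{k+1}=\tfrac12\Bal(\sigma,S_k)$ (correct, and a nice observation) and then re-deriving ``at most two/three sign changes'' for each new density. The paper does not do this at all. It never re-counts zeros after $k=1$. The induction step is purely \emph{monotonicity propagation}: from \eqref{eq:lem32proof2} and the density of $\Bal(\delta_t,S_k)$ for $t$ \emph{inside the gap} $(x_{1,k},x_{2,k})$, namely
\begin{equation*}
\frac{c(t)}{2\pi\sqrt{x}}\,\frac{x+b(t)}{|x-t|}\,\frac{1}{\sqrt{(x-x_{1,k})(x-x_{2,k})}},
\qquad b(t),c(t)>0,
\end{equation*}
one gets formula \eqref{eq:lem32proof3}. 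On $(0,x_{1,k})$ each of the factors $(x_{1,k}-x)^{-1/2}$, $(x_{2,k}-x)^{-1/2}$, $x+b(t)$, $(t-x)^{-1}$ is positive and strictly increasing, hence the subtracted term is strictly decreasing; combined with the induction hypothesis (that $v_k$ is strictly decreasing there) this gives $v_{k+1}$ strictly decreasing on $(0,x_{1,k})$. A single sign change on $(0,x_{1,k})$ and the location of the new endpoint $x_{1,k+1}\le x_{1,k}$ then follow immediately, with no determinant argument. The symmetric argument for $xv_{k+1}(x)$ on $(x_{2,k},\infty)$ uses that $\sqrt{x}/\sqrt{x-x_{1,k}}$, $\sqrt{x}/\sqrt{x-x_{2,k}}$, $(x+b(t))/(x-t)$ are all positive and decreasing there.

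The concrete gap in your proposal is precisely the step you flag as ``requiring the most computation'': the Maple-type determinant identities for $v_{k+1}'$ and $(xv_{k+1})'$ in the presence of the new algebraic factors $|R_{k,+}(x)|$, and the $4\times4$ Cauchy-type argument in the TIS-geometry case. These are neither carried out nor obviously within reach of the exact total-positivity trick used for $k=1$, because the balayage density onto $S_k$ with three finite endpoints is not of the clean form \eqref{eq:theo2proof4} in $x$; the relation $v_{k+1}(x)=\tilde v_{k+1}(x)\sqrt{x}/|R_{k,+}(x)|$ reintroduces the square-root factors whose derivatives spoil the simple Cauchy-kernel structure. The doubly connected case you single out as ``the main obstacle'' is in fact the \emph{generic} case in the paper's induction ($0<x_{1,k}<x_{2,k}<\infty$), and is handled there with no extra effort beyond noting $b(t)>0$. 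If you want to salvage your approach you would need to actually produce the total-positivity identities for the derivatives of the three-cut balayage kernel; the paper's monotonicity-propagation argument avoids this entirely and is the intended proof.
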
	

	Assuming that Lemma \ref{lemma34} holds, we complete the proof 	of Proposition \ref{prop31}
	as follows. The measures $(\nu_k^+)$ converges to $\mu$,
	and $\supp(\mu) = \overline{(0,x_1)} \cup \overline{(x_2,\infty)}$ where $x_1 = \lim_k x_{1,k}$
	and $x_2 = \lim_k x_{2,k}$. This establishes 
	Proposition \ref{prop31}.

\begin{proof}[Proof of Lemma \ref{lemma34}.]	
	For $k=1$, the statements (a) and (b) are contained in Lemmas \ref{lemma32} and \ref{lemma33}.
	
	Suppose the lemma holds for a certain $k \geq 1$.
	Let us assume that $0 < x_{1,k} < x_{2,k} < \infty$.
	We use the fact that the balayage of a delta mass $\delta_t$
	at $t \in (x_{1,k}, x_{2,k})$ onto
	$[0, x_{1,k}] \cup [x_{2,k}, \infty)$ has the density
	\[  \frac{c(t)}{2\pi \sqrt{x}} \frac{x+b(t)}{|x-t|}  \frac{1}{\sqrt{(x-x_{1,k})(x-x_{2,k})}}  \]
with positive constants $b(t) > 0$ and $c(t) >0$.
Then $\Bal(\nu_k^-, \supp(\nu_k^+))$ has the density
\[  \frac{1}{\pi \sqrt{x}} \frac{1}{\sqrt{(x-x_{1,k})(x-x_{2,k})}}
 \int c(t) \frac{x+b(t)}{|x-t|}
	 d\nu_1^-(t),
	\qquad x \in (0, x_{1,k}) \cup (x_{2,k},\infty). \]
In view of \eqref{eq:lem32proof2} and the induction hypothesis
we then obtain that $\nu_{k+1}$ has the density 
$\frac{v_{k+1}(x)}{2\pi \sqrt{x}}$ with
\begin{equation} \label{eq:lem32proof3} v_{k+1}(x) = v_k(x) - \frac{1}{\sqrt{(x-x_{1,k})(x-x_{2,k})}}
\int_{x_{1,k}}^{x_{2,k}} c(t) \frac{x+b(t)}{|x-t|}
d\nu_k^-(t),
\end{equation}
for $x \in (0, x_{1,k}) \cup (x_{2,k},\infty)$.
Also by the induction hypothesis $v_k(x)$ is strictly decreasing
on $(0,x_{1,k})$. The other term in the right-hand side
of \eqref{eq:lem32proof3} (including the minus-sign) 
is also decreasing on $(0,x_{1,k})$,
since each of the factors
$\frac{1}{\sqrt{x_{1,k}-x}}$, 
$\frac{1}{\sqrt{x_{2,k}-x}}$, 
$x + b(t)$, and $\frac{1}{t-x}$
is positive and strictly increasing for $x \in (0,x_{1,k})$ 
as $0 < x_{1,k} < t < x_{2,k}$ and $b(t) > 0$.
Also note that $c(t) d\nu_k^-(t)$ is a positive measure
on $[x_{1,k}, x_{2,k}]$. 

Thus $v_{k+1}$ is strictly decreasing on $(0, x_{1,k})$. Then 
it is either fully negative on $(0, x_{1,k})$, in which
case we take $x_{1,k+1} = 0$, or $v_{k+1}$ is positive on
some interval $(0, x_{1,k+1})$  with $0 < x_{1,k+1} < x_{1,k}$
and $v_{k+1}$ is negative on $(x_{1,k+1},x_{1,k})$.

Similar arguments show that $x v_{k+1}(x)$ is strictly increasing
on $(x_{2,k}, \infty)$. Here we need to observe that
each of the factors
$\frac{\sqrt{x}}{\sqrt{x-x_{1,k}}}$,
$\frac{\sqrt{x}}{\sqrt{x_{2,k}-x}}$, 
and $\frac{x + b(t)}{t-x}$ decreases on $(x_{2,k},\infty)$. 
Thus $v_{k+1}$ is either fully
negative there, in which case we put $x_{2,k+1} = \infty$,
or $v_{k+1}$ is positive on
some interval $(x_{2,k+1}, \infty)$  with $x_{2,k} < x_{2,k+1} < \infty$
and $v_{k+1}$ is negative on $(x_{2,k},x_{2,k+1})$.

Parts (a) and (b) of the lemma are thus proved for $k+1$
in case $0 < x_{1,k} < x_{2,k} < \infty$. If $x_{1,k} = 0$
or $x_{2,k} = \infty$, then there is an analogous reasoning
(which is simpler). 
The lemma follows by induction. 
\end{proof}

\subsection{Proof of part (b)} \label{subsec32}

\subsubsection{Definitions} \label{subsec321}
We first define maps $M$, $\widetilde{M}$ and $M_j$
between signed measures and vectors of signed measures that will 
be used in the proof of part (b) of Theorem \ref{theorem23}. 

\begin{definition} \label{def35}
	\begin{enumerate}
	\item[(a)] 
	For a signed measure $\sigma$ on $(-\infty,0]$ with
	$0 < \int d \sigma < \infty$ we define 
	$M(\sigma) = \mu$ as the measure on $[0,\infty)$
	that minimizes 
	\begin{equation} \label{eq:theo2proof1} 
	I(\mu) - I(\mu,\sigma) \end{equation}
	among $\mu \geq 0$ with $\int d\mu = \frac{1}{2} \int d\sigma$.
	\item[(b)] Similarly, for a signed measure $\sigma$ on $[0,\infty)$ with
	$0 < \int d \sigma < \infty$ we define 
	$\widetilde{M}(\sigma) = \mu$  as the measure on $(0,-\infty]$
	that minimizes \eqref{eq:theo2proof1} 
	among $\mu \geq 0$ with $\int d\mu = \frac{1}{2} d \sigma$.
	\item[(c)] 
	Consider vectors $\vec{\nu} = (\nu_0, \ldots, \nu_{r+1})$
	of signed measures of length $r+2$, such that $\nu_j$ is supported 
	on $(-1)^j [0,\infty)$ for $j=0, 1, \ldots, r+1$
	and $0 < \int d\nu_{j-1} + \int d \nu_{j+1} < \infty$ for
	$j= 1, \ldots, r$. For such $\vec{\nu}$ we define
	\begin{equation} \label{eq:Mj} 
		M_j \vec{\nu}
		= (\nu_0, \ldots, \nu_{j-1}, \widehat{\nu}_j,
			\nu_{j+1}, \ldots, \nu_{r+1}),
			\qquad j =1, \ldots, r,
			\end{equation}
	where
	\[ \widehat{\nu}_j = \begin{cases}
		M(\nu_{j-1}+\nu_{j+1}), & \text{ if $j$ is odd}, \\
		\widetilde{M}(\nu_{j-1} + \nu_{j+1}), & 
		\text{ if  $j$ is even},
	\end{cases} \]
	with $M$ and $\widetilde{M}$ as defined in parts (a)
	and (b).
	\end{enumerate}
\end{definition}

Some remarks are in order.

\begin{remark} \label{remark36}
	\begin{enumerate}
	\item[(a)]
The measures $M(\sigma)$ and $\widetilde{M}(\sigma)$
in parts (a) and (b) of Definition \ref{def35}  are minimizers of weakly
admissible equilibrium problems. The minimizers 
uniquely exist \cite{HaKu12}.
\item[(b)]  
If $\sigma \geq 0$ then $\mu = M(\sigma)$ is the
balayage measure $\mu = \frac{1}{2} \Bal(\sigma, [0,\infty))$. 
In this case we  have a monotonicity result
\begin{equation} \label{eq:Mmonotone} 
	0 \leq \sigma \leq  \widetilde{\sigma} 
	\implies M(\sigma) \leq M(\widetilde{\sigma}) \end{equation}
for measures $\sigma$ and $\widetilde{\sigma}$ on $(-\infty,0]$.
\item[(c)] 
Similarly 
\begin{equation} \label{eq:Mhatmonotone} 
	0 \leq \sigma \leq  \widetilde{\sigma} 
\implies \widehat{M}(\sigma) \leq \widehat{M}(\widetilde{\sigma}) 
\end{equation}
for measures $\sigma$ and $\widetilde{\sigma}$ on $[0, \infty)$.

\item[(d)] The maps are positive homogeneous in the sense
that $M(c\sigma) =  c M(\sigma)$, $\widehat{M}(c\sigma)
= c\widehat{M}(\sigma)$ and $M_j(c\vec{\nu})
	= c M_j(\vec{\nu})$ if $c > 0$.

\item[(e)] If $(\mu_{1,t}, \ldots, \mu_{r,t})$ is the
solution of the VEP of Definition \ref{def21} 
for some $q > 0$ and $t \in (0,1)$, 
and $\vec{\mu}_t = (\mu_{0,t}, \mu_{1,t}, \ldots, \mu_{r,t}, \mu_{r+1,t})$,  then 
\begin{equation} \label{eq:Mjfixed} 
	M_j(\vec{\mu}_t) = \vec{\mu}_t,
	\qquad  j= 1, \ldots, r.
	\end{equation}
That is,  $\vec{\mu}_t$ is a common fixed point for the mappings $M_j$. It is the only common fixed point 
among vectors $\vec{\mu}$ with $\mu_0$ and $\mu_{r+1}$
given by \eqref{eq:mu0}.
\end{enumerate}
\end{remark}

\subsubsection{Monotonicity of $M$}

We are going to apply $M$ only to positive measures and to
signed measures whose negative part is a single
point mass at $-q^{-1}$ (as in Proposition \ref{prop31}).  We need the extension
of the monotonicity result \eqref{eq:Mmonotone}
to such signed measure. It could be that the monotonicity
result is valid more generally, but we do not consider
it here since this is all we  need for our present
purposes. 

For such signed measures $\sigma$ we have the information
about the supports of $M(\sigma)$ from Proposition 
\ref{prop31}, and we also rely on the iterated balayage
that was used in the proof of Proposition \ref{prop31}.

\begin{lemma} \label{lemma37}
	Let $\sigma \leq  \widetilde{\sigma}$ be signed measure on $(-\infty,0]$
	with $0 < \int d\sigma < \int d\widetilde{\sigma} < \infty$ whose negative
	parts are single point masses at $-q^{-1}$ only. 
	Then $M(\sigma) \leq M(\widetilde{\sigma})$.
\end{lemma}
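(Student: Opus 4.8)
The plan is to run the iterated balayage construction from the proof of Proposition~\ref{prop31} \emph{simultaneously} for $\sigma$ and $\widetilde\sigma$. Write $\sigma = -A\,\delta_{-q^{-1}} + \sigma^+$ and $\widetilde\sigma = -\widetilde A\,\delta_{-q^{-1}} + \widetilde\sigma^+$ with $\sigma^+,\widetilde\sigma^+ \ge 0$ carrying no mass at $-q^{-1}$; then the hypothesis $\sigma \le \widetilde\sigma$ is equivalent to $\sigma^+ \le \widetilde\sigma^+$ together with $\widetilde A \le A$, and both $\sigma$ and $\widetilde\sigma$ satisfy the assumptions of Proposition~\ref{prop31}, hence those of Lemmas~\ref{lemma32}--\ref{lemma34} (if $A=0$ or $\widetilde A=0$ one of the two measures is positive and the assertion reduces to \eqref{eq:Mmonotone}, so we may assume $A,\widetilde A>0$). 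Let $(\nu_k)_k$, $(\widetilde\nu_k)_k$ be the corresponding iterated balayage sequences of \eqref{eq:lem32proof2}, started from $\nu_1 = \tfrac12\Bal(\sigma,[0,\infty))$ and $\widetilde\nu_1 = \tfrac12\Bal(\widetilde\sigma,[0,\infty))$. Since $\nu_{k+1} = \nu_k^+ - \Bal(\nu_k^-,\supp\nu_k^+) \le \nu_k^+$ we have $\nu_{k+1}^+ \le \nu_k^+$, so $\nu_k^+$ decreases to $M(\sigma)$ and likewise $\widetilde\nu_k^+$ to $M(\widetilde\sigma)$ by the proof of Proposition~\ref{prop31}; as the order of measures is preserved under weak${}^*$ limits, it suffices to prove
\begin{equation} \label{eq:lem37plan}
	\nu_k^+ \le \widetilde\nu_k^+ \qquad \text{for all } k \ge 1 .
\end{equation}

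I would establish \eqref{eq:lem37plan} by induction, carrying along the structural information of Lemma~\ref{lemma34}. For $k=1$, balayage onto $[0,\infty)$ is linear and order preserving, so $\widetilde\nu_1 - \nu_1 = \tfrac12\Bal(\widetilde\sigma-\sigma,[0,\infty)) \ge 0$; equivalently, in the notation of \eqref{eq:theo2proof4} the densities satisfy $v \le \widetilde v$ on $(0,\infty)$ because $\rho \le \widetilde\rho$ and $A_1 \ge \widetilde A_1$. In particular $\nu_1 \le \widetilde\nu_1$, whence \eqref{eq:lem37plan} for $k=1$. For the inductive step, \eqref{eq:lem37plan} at level $k$ gives $\supp\nu_k^+ \subseteq \supp\widetilde\nu_k^+$, and by Lemma~\ref{lemma34} both supports have the two-interval form; write $S_k = \supp\nu_k^+$ and $\widetilde S_k = \supp\widetilde\nu_k^+$, so that $S_k \subseteq \widetilde S_k$ and $\widetilde S_k \setminus S_k$ lies in the gap of $S_k$. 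As $\nu_{k+1}$ is carried by $S_k$, on $\widetilde S_k \setminus S_k$ and off $\widetilde S_k$ one has $\nu_{k+1} = 0$, hence $\nu_{k+1}^+ = 0 \le \widetilde\nu_{k+1}^+$ there automatically; the only region requiring work is $S_k$, where
\begin{equation} \label{eq:lem37plan2}
	\widetilde\nu_{k+1} - \nu_{k+1} = \bigl(\widetilde\nu_k^+ - \nu_k^+\bigr) + \bigl[\Bal(\nu_k^-,S_k) - \Bal(\widetilde\nu_k^-,\widetilde S_k)\bigr] ,
\end{equation}
the first bracket being $\ge 0$ by the induction hypothesis.

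Thus everything hinges on showing $\Bal(\nu_k^-,S_k) \ge \Bal(\widetilde\nu_k^-,\widetilde S_k)$ on $S_k$. The iterated balayage identity $\Bal(\,\cdot\,,S_k) = \Bal(\Bal(\,\cdot\,,\widetilde S_k),S_k)$ (valid since $S_k \subseteq \widetilde S_k$), together with the elementary fact that sweeping a measure carried by $\widetilde S_k$ onto the subset $S_k$ does not decrease it on $S_k$, yields $\Bal(\widetilde\nu_k^-,S_k) \ge \Bal(\widetilde\nu_k^-,\widetilde S_k)$ on $S_k$, and then by monotonicity of balayage in the swept measure it remains to compare $\nu_k^-$ with $\widetilde\nu_k^-$. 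This is the crux and, I expect, the main obstacle: the naive invariants $\nu_k \le \widetilde\nu_k$ and $\nu_k^- \ge \widetilde\nu_k^-$ are \emph{not} preserved by the iteration --- they can fail precisely on the moving pieces $\widetilde S_k \setminus S_k$, where $\widetilde\nu_k$ may be negative while $\nu_k^+$ vanishes --- so one cannot simply propagate them. The resolution I would aim for is a refined simultaneous induction that keeps \eqref{eq:lem37plan} but tracks, in addition, the positions of the gap endpoints $x_{1,k} \le \widetilde x_{1,k}$, $\widetilde x_{2,k} \le x_{2,k}$ and the ordering of $\nu_k$ and $\widetilde\nu_k$ on the common region $[0,\infty) \setminus (\widetilde S_k \setminus S_k)$; the monotonicity of the densities $v_k,\widetilde v_k$ on the relevant subintervals from Lemma~\ref{lemma34}(b) and the Cauchy--determinant sign computations underlying Lemmas~\ref{lemma32}--\ref{lemma33} are the tools that control how the gaps move relative to one another and keep this bookkeeping closed. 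Once $\Bal(\nu_k^-,S_k) \ge \Bal(\widetilde\nu_k^-,\widetilde S_k)$ holds on $S_k$, identity \eqref{eq:lem37plan2} gives $\nu_{k+1} \le \widetilde\nu_{k+1}$ on $S_k$, hence $\nu_{k+1}^+ \le \widetilde\nu_{k+1}^+$ everywhere, closing the induction; the degenerate cases $x_{2,k} = \infty$ and $x_{1,k} = 0$ go through by the same argument with obvious simplifications. (An alternative worth trying, if the balayage bookkeeping becomes unwieldy, is the completed-square identity $I(\nu) - I(\nu,\sigma) = I(\nu - \nu_1) + \mathrm{const}$ valid for $\nu$ on $[0,\infty)$ of mass $\tfrac12\int d\sigma$, which exhibits $M(\sigma)$ as the energy-norm metric projection of $\nu_1$ onto $\{\nu \ge 0,\ \int d\nu = \tfrac12\int d\sigma\}$, and then to invoke monotonicity of such projections via the domination principle.)
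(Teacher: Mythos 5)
Your proposal correctly sets up the iterated balayage machinery, correctly reduces the assertion to comparing the two iterations, and correctly identifies the crux: when you run the \emph{standard} iteration \eqref{eq:lem32proof2} for $\sigma$ and $\widetilde\sigma$ separately, the two sequences balayage onto \emph{different} sets ($\supp\nu_k^+$ vs.\ $\supp\widetilde\nu_k^+$), and the naive invariant $\nu_k \le \widetilde\nu_k$ is not preserved because of what happens on the moving gap pieces. But at exactly that point you stop: the ``refined simultaneous induction that tracks the gap endpoints and the ordering on the common region'' is only sketched as a plan, not carried out, and it is far from clear that the Cauchy-determinant sign computations alone close such a bookkeeping argument. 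The alternative you float (viewing $M$ as a metric projection and invoking monotonicity of projections) is also not justified: monotonicity of metric projections onto the cone of positive measures of fixed mass is not a general fact, and you would need to prove it in this setting. So the proposal has a genuine gap where the main idea should be.

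The paper resolves the difficulty with a small but decisive modification that your proposal does not consider: it runs the honest iteration $(\nu_k)$ only for $\widetilde\sigma$, and for $\sigma$ it runs a \emph{modified} iteration
\begin{equation*}
\rho_{k+1} = \rho_k^+ - \Bal\bigl(\rho_k^-,\ \supp\nu_k^+\bigr),
\end{equation*}
i.e.\ it sweeps $\rho_k^-$ onto the support set coming from the \emph{other} sequence, not onto $\supp\rho_k^+$. Because both sweeps at step $k$ now land on the same set $T_k := \supp\nu_k^+$, the comparison $\rho_k \le \nu_k$ propagates by a one-line induction: $\nu_{k+1}-\rho_{k+1} = (\nu_k^+-\rho_k^+) + \Bal(\rho_k^--\nu_k^-,T_k)$, and both summands are $\ge 0$ on $T_k$ from $\rho_k\le\nu_k$ and the Jordan decompositions, since $\supp\rho_k^+\subset T_k$ forces $\nu_k^+\ge\rho_k^+$ on $T_k$ and $\rho_k^-\ge\nu_k^-$ on the gap. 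On the other hand $\rho_{k+1}=\frac12\Bal(\sigma,T_k)$ by transitivity of balayage, and since $\supp(M(\sigma))\subset\supp\rho_k^+\subset T_k$, the inequality $M(\sigma)\le\rho_{k+1}^+$ from \cite[Lemma 3]{KuDr99} still holds even though $(\rho_k)$ is no longer the iterated balayage for $\sigma$. Passing to the limit gives $M(\sigma)\le\rho_\infty^+\le\widetilde\mu=M(\widetilde\sigma)$. This asymmetric coupling is precisely the missing idea; without it the obstacle you yourself identified remains unaddressed.
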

\begin{proof} 
	Under the assumptions
	of the lemma, the signed measures take the form 
	$\sigma = - A \delta_{-q^{-1}}  + \sigma^+$
	and 
	$\widetilde{\sigma}  = - \widetilde{A} \delta_{-q^{-1}} + \widetilde{\sigma}^+$
	with $A \geq \widetilde{A} \geq 0$, 
	and $0 \leq \sigma^+ \leq \widetilde{\sigma}^+$.
	We write $\mu = M(\sigma)$ and $\widetilde{\mu}
	= M(\widetilde{\sigma})$.
	
	We recall the iterated balayage algorithm  
	from the proof of Proposition~\ref{prop31}, see in
	particular Lemma \ref{lemma34}, and we apply
	it to the signed measure $\widetilde{\sigma}$. 
	That is, we start with $\nu_1 = \frac{1}{2} \Bal(\widetilde{\sigma}, [0,\infty))$, and from there we construct the sequence
	$(\nu_k)_k$ inductively by
	\[ \nu_{k+1} = \nu_k^+ - \Bal(\nu_k^-, \supp(\nu_k^+)),
		\qquad k = 1,2, \ldots. \]
	Then $(\nu_k)$ converges to $\widetilde{\mu} 
	= M(\widetilde{\sigma})$ as
	was shown in the proof of Lemma \ref{lemma34}. 
	
	Next we define a second sequence $(\rho_k)_k$ by
	$\rho_1 = \frac{1}{2} \Bal(\sigma, [0, \infty))$,
	and
	\begin{equation} \label{eq:lem41proof} 
	\rho_{k+1} = \rho_k^+ - \Bal(\rho_k^-, \supp(\nu_k^+)),
	\qquad k=1,2, \ldots. \end{equation}
	Since $\sigma \leq \widetilde{\sigma}$ we have
	$\rho_1 \leq \nu_1$, and then by induction it easily follows that $\rho_k \leq \nu_k$ for every $k$.
	Note that we deviate from the earlier construction by
	taking in \eqref{eq:lem41proof} the balayage of $\rho_k^+$  onto $\supp(\nu_k^+)$ and not onto $\supp(\rho_k^+)$. 
	Since $\supp(\nu_k^+) \supset \supp(\rho_k^+)$, we however
	still find (by induction) that 
	$\mu = M(\sigma) \leq \rho_k^+$
	for every $k$. Then $\rho_{\infty} = \lim\limits_{k \to \infty} \rho_k$
	is a signed measure with $\rho_{\infty} \leq \widetilde{\mu}$ and
	$\mu \leq \rho_{\infty}^+$. 
	Thus $\mu \leq \widetilde{\mu}$ as claimed in the lemma.
\end{proof}

\subsubsection{$M$-convexity} \label{subsec323}

We need two more definitions. Note that $M$-convexity
is not a standard terminology, but it is introduced
here to help the exposition.

\begin{definition} \label{def38}
	Let $\vec{\nu}$ be as in Definition \ref{def35}~(c),
	and let $M_j$ be as in \eqref{eq:Mj}. Then we say that $\vec{\nu}$ is $M$-convex if
	\[ \vec{\nu} \leq M_j(\vec{\nu})
		\qquad \text{for every } j=1, \ldots, r. \]
\end{definition}

\begin{definition} \label{def39}
	The set $\mathcal M_q$ contains those
	vectors $\vec{\nu} = (\nu_0, \nu_1, \ldots, \nu_{r+1})$
	satisfying
	\begin{itemize}
		\item $\nu_j$ is a positive measure on $(-1)^{j-1} [0,\infty)$ for $j=1, \ldots, r+1$,
		\item $\nu_0 = - A\delta_{-q^{-1}}$ for 
		some $A  < \int d\nu_2$.
	\end{itemize}
\end{definition}

Then we have the following properties.

\begin{lemma} \label{lemma310} Suppose $\vec{\nu} \in \mathcal M_q$. 
\begin{enumerate}
	\item[\rm (a)] Then $M_j(\vec{\nu}) \in \mathcal M_q$
	for every $j=1,\ldots, r$.
	
	\item[\rm (b)] If $\vec{\nu} \leq \vec{\rho} \in \mathcal M_q$
	then $M_j(\vec{\nu}) \leq M_j(\vec{\rho})$
	for every $j=1,\ldots, r$.
	
	\item[\rm (c)] If $\vec{\nu}$ is $M$-convex
	then so is $M_j \vec{\nu}$ for every $j=1,\ldots, r$.
	
	\item[\rm (d)] If $\vec{\nu}$ is $M$-convex
	and $c > 0$ then $\vec{\nu} +  (c \delta_{-q^{-1}}, 0, 0, \ldots, 0)$ 	is $M$-convex.
	
	\item[\rm (e)] If $\vec{\nu}$ is $M$-convex
	and $\rho \geq 0$ is a measure on $(-1)^r [0,\infty)$ 
	then $\vec{\nu} +  (0, 0, \ldots, 0, \rho)$
	is $M$-convex.
\end{enumerate}
\end{lemma}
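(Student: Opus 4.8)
The plan is to exploit one structural fact repeatedly: $M_j$ modifies only the $j$-th entry of $\vec\nu$, replacing it by $M$ -- or by $\widetilde M$ when $j$ is even -- applied to the neighbour sum $\nu_{j-1}+\nu_{j+1}$, and the operators $M,\widetilde M$ are monotone in their argument. For nonnegative inputs this monotonicity is just the monotonicity of balayage recorded in Remark~\ref{remark36}; for inputs of the shape $-A\delta_{-q^{-1}}+(\text{positive})$ it is Lemma~\ref{lemma37}. With this in hand (a) and (b) are short, (c) is the substantive case, and (d) and (e) run on the same mechanism as (c); I expect (c) to be where the real work lies.

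\emph{Parts (a) and (b).} The zeroth entry $\nu_0$ is never touched by any $M_j$ ($j\ge1$), and $\nu_2$ is touched only when $j=2$. For $j\ge2$ the neighbour sum $\nu_{j-1}+\nu_{j+1}$ is nonnegative, so $\widehat\nu_j=\tfrac12\Bal(\nu_{j-1}+\nu_{j+1},\Delta_j)$ is a nonnegative measure on the correct half-line; hence for $j\neq2$ the defining inequality $A<\int d\nu_2$ of $\mathcal{M}_q$ survives, while for $j=2$ the second entry changes and one must verify that its new total mass $\tfrac12\big(\int d\nu_1+\int d\nu_3\big)$ still dominates $A$. For $j=1$, Proposition~\ref{prop31} is precisely the assertion that $M(\nu_0+\nu_2)$ is a genuine positive measure on $[0,\infty)$, and $\nu_0,\nu_2$ stay put; this gives (a). For (b), $\vec\nu\le\vec\rho$ forces $\nu_{j-1}+\nu_{j+1}\le\rho_{j-1}+\rho_{j+1}$ (with negative part a point mass at $-q^{-1}$ when $j=1$), so monotonicity of $M$ or $\widetilde M$ gives $\widehat\nu_j\le\widehat\rho_j$; the other entries already obey $\nu_k\le\rho_k$, so $M_j\vec\nu\le M_j\vec\rho$.

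\emph{Part (c).} First note $M_j$ is idempotent: applying it again recomputes entry $j$ from the same unchanged neighbours. So $\vec\tau:=M_j\vec\nu$ satisfies $\vec\tau\le M_j\vec\tau=\vec\tau$, settling $i=j$. If $|i-j|\ge2$, then $M_i$ acts on an entry $\neq j$ and its neighbour sum $\tau_{i-1}+\tau_{i+1}=\nu_{i-1}+\nu_{i+1}$ is unperturbed by $M_j$, so $\vec\tau\le M_i\vec\tau$ collapses, in the single entry where the two sides differ, to $\nu_i\le(M_i\vec\nu)_i$, which holds since $\vec\nu$ is $M$-convex (Definition~\ref{def38}). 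The core case is $|i-j|=1$; take $i=j+1$ (the case $i=j-1$ being the mirror image). The two sides of $\vec\tau\le M_{j+1}\vec\tau$ differ only in entry $j+1$, where the right-hand side is $M$ or $\widetilde M$ applied to $\tau_j+\tau_{j+2}=\widehat\nu_j+\nu_{j+2}$. From $M$-convexity of $\vec\nu$ we get both $\nu_{j+1}\le(M_{j+1}\vec\nu)_{j+1}$, which is the same operator applied to $\nu_j+\nu_{j+2}$, and $\nu_j\le\widehat\nu_j$; hence $\nu_j+\nu_{j+2}\le\widehat\nu_j+\nu_{j+2}$, and since all measures here are nonnegative, balayage monotonicity upgrades this to $\nu_{j+1}\le(M_{j+1}\vec\tau)_{j+1}$. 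In the mirror sub-case $i=j-1$ the argument is identical, except that when $j=2$ the neighbour sum contains the signed entry $\nu_0$ and the monotonicity step is supplied by Lemma~\ref{lemma37} in place of balayage monotonicity. Throughout, part (a) is invoked so that $\vec\tau\in\mathcal{M}_q$ and $M_i$ is defined on it. The main obstacle is exactly the bookkeeping in this step: tracking which neighbour sums move when the triples $\{i-1,i,i+1\}$ and $\{j-1,j,j+1\}$ overlap, and noticing that the decisive chain needs $M$-convexity \emph{twice} together with monotonicity of $M$, the index $1$ being the one place where Lemma~\ref{lemma37} rather than ordinary balayage monotonicity is forced.

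\emph{Parts (d) and (e).} Let $\vec\eta$ denote the perturbed vector; it agrees with $\vec\nu$ except that a nonnegative measure has been added to one boundary entry -- a multiple of $\delta_{-q^{-1}}$ to $\nu_0$ in (d), the measure $\rho$ to $\nu_{r+1}$ in (e) -- and $\vec\eta$ still satisfies the support and mass requirements, since in (d) the coefficient of $-\delta_{-q^{-1}}$ in the zeroth entry drops from $A$ to $A-c<A<\int d\nu_2$. For each index $i$ not adjacent to the modified entry -- $i\ge2$ in (d), $i\le r-1$ in (e) -- the neighbour sum feeding $M_i$ is unchanged, so $\vec\eta\le M_i\vec\eta$ reduces to $\nu_i\le(M_i\vec\nu)_i$, true by $M$-convexity of $\vec\nu$. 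For the single adjacent index -- $i=1$ in (d), $i=r$ in (e) -- the neighbour sum increases by the added nonnegative measure, so by monotonicity of $M$ (Lemma~\ref{lemma37} in (d), where the extra mass is stacked on the point mass $\nu_0$; balayage monotonicity in (e), where $\rho\ge0$) we have $(M_i\vec\nu)_i\le(M_i\vec\eta)_i$, and combining with $\nu_i\le(M_i\vec\nu)_i$ gives $\nu_i\le(M_i\vec\eta)_i$. Hence $\vec\eta$ is $M$-convex, which is (d) and (e).
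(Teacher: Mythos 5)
Your proof is correct and follows essentially the same line as the paper's: parts (a), (b), (d), (e) are the same routine verifications, and for (c) the substantive case $|i-j|=1$ is handled exactly as in the paper, by invoking $M$-convexity twice (once to get $\nu_j\le\widehat\nu_j$, once to bound $\nu_{j\pm1}$) together with the monotonicity of $M$ or $\widetilde M$, with Lemma~\ref{lemma37} supplying the monotonicity step when the signed entry $\nu_0$ appears. The only presentational difference is in the easy case $|i-j|\neq 1$: the paper notes that $M_i$ and $M_j$ commute and then applies part~(b), whereas you argue directly that the neighbour sum feeding $M_i$ is untouched by $M_j$ so the inequality collapses to a single-entry comparison; these are two ways of saying the same thing. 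One small remark: in (a) you flag, but do not discharge, the requirement that the new mass of entry $2$ still dominate $A$ after $M_2$ is applied; the paper treats (a) as obvious and doesn't address it either. In the applications (the proof of Theorem~\ref{theorem23}(b)) the vectors are always $M$-convex, in which case $\nu_2\le\widehat\nu_2$ and the point is automatic, so there is no effective gap — but since you raised it, it would have been worth closing.
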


\begin{proof}
(a) Obvious.

(b) This follows from the monotonicity of $M$ and $\widetilde{M}$ on positive measures, see \eqref{eq:Mmonotone}
and \eqref{eq:Mhatmonotone}, and the monotonicity of $M$
on signed measures whose negative part only contains
a point mass at $-q^{-1}$, see Lemma \ref{lemma37}.

(c) Since $\vec{\nu}$ is $M$-convex we have 
	$\vec{\nu} \leq M_k\vec{\nu}$ for every $k$.
	The maps $M_k$ and $M_j$ commute if $|j-k| \neq 1$.
	Thus it follows from part (b) that 
	\[ M_j\vec{\nu} \leq M_j M_k\vec{\nu}
		= M_k M_j \vec{\nu}, \qquad k \neq \{j-1,j+1\}.
		 \]
	For $k \in \{j-1,j+1\}$ we can verify by direct
	inspection that the inequality between
	$M_j \vec{\nu}$ and $M_k M_j\vec{\nu}$ also holds.
	The two vectors only differ at positions $k = j \pm 1$, 
	which for $M_j \vec{\nu}$
	is equal to $\nu_{j\pm 1}$, and for $M_{j\pm 1} M_j \vec{\nu}$
	it is $M( \nu_{j\pm 2} + \widehat{\nu}_j)$
	or $\widetilde{M} (\nu_{j\pm 2} + \widehat{\nu}_j )$
	(depending on the parity of $j$)
	with $\widehat{\nu}_j$ as in \eqref{eq:Mj}.
	 By $M$-convexity of $\vec{\nu}$ we have $\nu_j \leq \widehat{\nu}_j$
	 and by the monotonicity properties of $M$ and $\widetilde{M}$ and $M$-convexity once more, we have
	 \[ \nu_{j-1} \leq  M(\nu_{j\pm 2}+ \nu_j) \leq M(\nu_{j\pm 2} + \widehat{\nu}_j )  \quad \text{ if $j$ is odd} \]
	 with $M$ replaced by $\widetilde{M}$ if $j$ is even.
	This proves $M_j \vec{\nu} \leq M_k M_j \vec{\nu}$
	also in case $|j-k| = 1$ and part (c) follows.
	 
(d)	and (e) are straightfoward verifications.
\end{proof}

\subsubsection{Proof of Theorem \ref{theorem23}  (b)} 
\label{subsec324}
\begin{proof}
	Let us take $0 < s < t < 1$. Then we have to show that
	$s \mu_{j,s} \leq t \mu_{j,t}$ for every $j=0,1, \ldots, r+1$.
	This is clear for $j=0$ and $j=r+1$ due to 
	the definitions \eqref{eq:mu0}. 
	
	Write
	$\vec{\mu}_s = (\mu_{0,s}, \mu_{1,s}, \ldots, \mu_{r+1,s})$
	and similarly for $\vec{\mu}_t$.
	Then by the definition of the operators
	$M_j$, we have
	\[  M_j(s \vec{\mu}_s) = s \vec{\mu}_s,
		\quad M_j(t\vec{\mu}_t) = t \vec{\mu}_t. \]
	see also Remark \ref{remark36} (d) and (e).
		
	Now we put
	\begin{align} \nonumber \vec{\nu}_1 
	& = s\vec{\mu}_s
		+ ((t-s) \delta_{-q^{-1}}, 0, \ldots, 0,  
			(t-s) \delta_{(-1)^r q} ) \\
	& = (t \mu_{0,t}, s \mu_{1,s}, s \mu_{2,s},
		\ldots, s \mu_{r,s}, t \mu_{r+1,t}). \label{eq:vecnu0}
		\end{align}
	This is the vector $s \vec{\mu}_s$ with the $0$th
	and $r+1$st components replaced by those of $t \vec{\mu}_t$.
	Then $s \vec{\mu}_s \leq \vec{\nu}_1$ and
	$\vec{\nu}_1$ is $M$-convex by Lemma \ref{lemma310}
	(d) and (e) and the fact that $s\vec{\mu}_s$ is
	$M$-convex.
	
	We choose an infinite sequence $(j_k)_k$ in $\{1, \ldots, r\}$
	where we make sure that every $j$ in $\{1, \ldots, r\}$ appears
	an infinite number of times in the sequence. Then 
	we define a sequence $(\vec{\nu}_k)_k$ by 
	\[ \vec{\nu}_{k+1} = M_{j_k}(\vec{\nu}_k) \quad \text{ for } k = 1,2, \ldots. \]
	Inductively we find that each $\vec{\nu}_k$ is $M$-convex by Lemma \ref{lemma310}(c) and the fact that $\vec{\nu}_1$
	is $M$-convex.
	Then $\vec{\nu}_k \leq \vec{\nu}_{k+1}$ for every $k$.
	
	Also by induction it is easy to show that 
	\[ \int d(\vec{\nu}_k)_j \leq t + j-1 \]
	for every $k$ and for every $j$. 	
	Thus the sequence $(\vec{\nu}_k)$ is increasing
	with a componentwise limit 
	$\vec{\nu}_k \to \vec{\nu}_{\infty}$, as 
	$k \to \infty$ (with convergence in weak$^*$-sense).
	
	If $j = j_k$ is even, then 
	\[ (\vec{\nu}_{k+1})_j  = M \left((\vec{\nu}_k)_{j-1} + (\vec{\nu}_k)_{j+1} \right). \]
	while for $j$ is odd we have to replace $M$ by $\widetilde{M}$.
	
	If we take the limit $k \to \infty$ along the subsequence for which $j_k = j$, 	then 
	it follows from this that 
	$M_j(\vec{\nu}_{\infty}) = \vec{\nu}_{\infty}$
	for every $j$. Since the $0$th and $r+1$st components
	are those of $t \vec{\mu}_t$, we  
	we conclude that $\vec{\nu}_{\infty} = t \vec{\mu}_t$,
	see also Remark \ref{remark36} (e).

We combine the inequalities to find
	\[ s \vec{\mu}_s \leq \vec{\nu}_1 \leq \cdots \leq \vec{\nu}_k \leq \cdots \leq \vec{\nu}_{\infty} = t \vec{\mu}_t \]
	which indeed shows that $s \mu_{j,s} \leq t \mu_{j,t}$ for every $j$.
\end{proof}

\subsection{Proof of part (c)} \label{subsec33}

\subsubsection{An equivalent equilibrium problem}
\label{subsec331}

For $\theta > 0$ and a measure $\mu$ on $[0,\infty)$
we write
\begin{equation} \label{eq:Itheta} 
	I_{\theta}(\mu) = \iint \log \frac{1}{|x^\theta - y^{\theta}|} d \mu(x) d\mu(y) \end{equation}
which we may call the $\theta$-energy of $\mu$. For
$\theta = 1$ it reduces to the usual logarithmic
energy $I(\mu)$ of $\mu$.

\begin{proposition} \label{prop311}
	Let $(\mu_1, \ldots, \mu_r)$ be the solution
	of the VEP of Definition \ref{def21} with parameters $q > 0$ and $0 < t < 1$.
	Then the first component $\mu_1$ minimizes
	\begin{align} \label{eq:MBEP} 
	\frac{1}{2} I(\nu) + \frac{1}{2} I_{\theta}(\nu) 
		+ \int V(x) d\nu(x) \end{align}
	with $\theta = \frac{1}{r}$ and 
	\begin{equation} \label{eq:Vx} V(x) = - \frac{1-t}{t} \log\left(x+q^{-1}\right) 
		+ \frac{r+t}{t} \log\left(x^{1/r} + q^{1/r}\right) \end{equation}
	among all probability measures $\nu$ on $[0,\infty)$.
\end{proposition}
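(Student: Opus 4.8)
The plan is to identify $\mu_1$ as the minimizer of \eqref{eq:MBEP} through the Euler--Lagrange (equilibrium) conditions and to match these with the conditions already satisfied by the vector equilibrium problem.

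First I would record the two sets of variational conditions. Since the push-forward $\nu\mapsto\widetilde\nu$ under $x\mapsto x^{1/r}$ is affine, $I_{1/r}(\nu)=I(\widetilde\nu)$ is a positive semidefinite quadratic form in $\nu$, so \eqref{eq:MBEP} is a convex functional; moreover it is the energy of a weakly admissible scalar equilibrium problem in the sense of \cite{HaKu12}, with external field $V$ as in \eqref{eq:Vx} (one checks $V(x)\sim\frac{r+1}{r}\log x$, exactly the critical growth). Hence there is a unique minimizer, and $\mu_1$ is that minimizer if and only if, for some constant $c$,
\[
 U^{\mu_1}(x)+\int\log\frac{1}{|x^{1/r}-y^{1/r}|}\,d\mu_1(y)+V(x)
 \begin{cases} =c, & x\in\supp(\mu_1),\\ \ge c, & x\in[0,\infty).\end{cases}
\]
On the other hand, the Euler--Lagrange condition for the VEP, in the first component, reads
\[
 2U^{\mu_1}(x)-U^{\mu_0}(x)-U^{\mu_2}(x)
 \begin{cases} =c_1, & x\in\supp(\mu_1),\\ \ge c_1, & x\in[0,\infty),\end{cases}
\]
while $\mu_2,\dots,\mu_r$ are the balayage measures \eqref{eq:balmuj} and satisfy \eqref{eq:Umuj} (all of this from Lemma \ref{lemma22}).

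Next I would carry out the change of variables $x=\zeta^{r}$. For $x,y\ge 0$ one has $x-y=\prod_{k=0}^{r-1}\bigl(x^{1/r}-\omega^{k}y^{1/r}\bigr)$ with $\omega=e^{2\pi i/r}$, hence $U^{\mu_1}(x)=\sum_{k=0}^{r-1}U^{\lambda_k}(x^{1/r})$ and $\int\log\frac{1}{|x^{1/r}-y^{1/r}|}\,d\mu_1(y)=U^{\lambda_0}(x^{1/r})$, where $\lambda_k$ is the image of $\mu_1$ under $y\mapsto\omega^{k}y^{1/r}$ (a measure on the ray $\omega^{k}[0,\infty)$). Likewise, for $x\ge 0$ one has $V(x)=-U^{\mu_0}(x)-\frac{r+t}{t}\,U^{\delta_{-q^{1/r}}}(x^{1/r})$, since $x^{1/r}+q^{1/r}=|x^{1/r}-(-q^{1/r})|$. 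Substituting into the Euler--Lagrange combination of \eqref{eq:MBEP} and eliminating $U^{\mu_0}(x)$ by the vector-equilibrium condition, everything cancels except the term $U^{\mu_2}(x)$ (and the free constants), so the whole statement is equivalent to the single identity
\[
 U^{\mu_2}\bigl(\zeta^{r}\bigr)=\sum_{k=1}^{r-1}U^{\lambda_k}(\zeta)+\frac{r+t}{t}\,U^{\delta_{-q^{1/r}}}(\zeta)+\text{(const)},\qquad \zeta\in[0,\infty).
\]
If this holds on all of $[0,\infty)$, the $\ge$--parts of the two sets of conditions also match, so only the displayed identity is needed.

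It remains to prove this identity, and this is where the real work lies. The idea is to compute $U^{\mu_2}$ on $(0,\infty)$ by telescoping the balayage relations \eqref{eq:balmuj}, using $U^{\Bal(\sigma,\Delta_j)}=U^{\sigma}-\int g_{\mathbb C\setminus\Delta_j}(\cdot,w)\,d\sigma(w)$, with $g_{\mathbb C\setminus\Delta_j}$ the Green's function of the slit plane, together with the elementary but crucial collapse: for $z,w$ in opposite closed half-lines, $U^{\delta_w}(z)-g_{\mathbb C\setminus\Delta_j}(z,w)$ is again a logarithmic kernel after a square-root change of variables. Running this down the chain $\mu_r=\tfrac12\Bal(\mu_{r-1}+\mu_{r+1},\Delta_r),\ \mu_{r-1}=\tfrac12\Bal(\mu_{r-2}+\mu_r,\Delta_{r-1}),\ \dots,\ \mu_2=\tfrac12\Bal(\mu_1+\mu_3,\Delta_2)$ and assembling the accumulated radicals into $r$-th roots produces the right-hand side above; it is precisely here that the arithmetic progression of masses \eqref{eq:massmuj} and the alternation of the supports $\Delta_j$ are used, so that the telescoping closes exactly. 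Equivalently, after the substitution $x=\zeta^{r}$ the whole chain becomes a configuration of measures on consecutive rays of the $2r$-pointed star $\bigcup_{k=0}^{2r-1}e^{ik\pi/r}[0,\infty)$ obeying reflection relations across those rays, and one checks that the two sides of the identity are harmonic off the same rays, have the same behavior at infinity, and the same jumps, hence differ by a constant. Organising this bookkeeping -- the roots of unity, the Green's functions of the slit planes, and the matching of jumps ray by ray -- is the main obstacle; the remaining manipulations with logarithmic potentials are routine.
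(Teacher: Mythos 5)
Your reduction via Euler--Lagrange conditions is sound: after the change of variables $\zeta = x^{1/r}$ and the factorization $|x-y|=\prod_{k=0}^{r-1}|x^{1/r}-\omega^k y^{1/r}|$, matching the variational condition of \eqref{eq:MBEP} with the variational condition of the VEP for $\mu_1$ does reduce the proposition (up to constants, which are harmless by weak admissibility and convexity) to the single identity
\[
 U^{\mu_2}(\zeta^r)=\sum_{k=1}^{r-1}U^{\lambda_k}(\zeta)+\frac{r+t}{t}\,U^{\delta_{-q^{1/r}}}(\zeta)+\text{const},\qquad \zeta\ge 0.
\]
This is exactly the content of \eqref{eq:pUnu2}/\eqref{eq:Unu2} in the paper (with $\nu_2$ replaced by $\mu_2$), and your logic "if this holds, the $\ge$-parts also match, so $\mu_1$ is the MBEP minimizer by uniqueness" is correct. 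So the first half of your argument is a valid, somewhat leaner route than the paper's, which instead computes the partial minimum $J(\nu)$ directly and needs both $U^{\nu_2}$ and $U^{\nu_r}$.

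The gap is that the key identity itself is not proven — you state it and then sketch two strategies without carrying either out. The "telescoping balayage" suggestion is not really a telescoping: the relations $\mu_j=\tfrac12\Bal(\mu_{j-1}+\mu_{j+1},\Delta_j)$ for $j=2,\dots,r$ form a \emph{coupled} system (each $\mu_j$ depends on $\mu_{j+1}$, which in turn depends on $\mu_j$), so one cannot just run "down the chain" and accumulate radicals. Your second suggestion — pass to the $2r$-pointed star and match harmonicity, jumps, and behavior at infinity — is in fact the right idea, and it is in spirit the same as the paper's proof: the paper encodes precisely those jump conditions by introducing the $r$-sheeted Riemann surface $\mathcal{S}$ of \eqref{eq:RSS} on which the function $\Psi$ of \eqref{eq:PsionS} is meromorphic, observes that $\mathcal{S}$ has genus zero with the explicit global parametrization $z=w^r$, and then reads off $\Psi$ as the unique rational function of $w$ with two prescribed simple poles and prescribed value at infinity, which yields \eqref{eq:Psi} and hence \eqref{eq:pUnu2}. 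That genus-zero parametrization, together with reducing first to a point mass $\nu=\delta_p$, is the step that makes the computation close; you gesture at it ("one checks") but this is precisely where the work lies, and without it the argument is incomplete.
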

Energy functionals of the form \eqref{eq:MBEP} appeared before  in the context of Muttalib-Borodin ensembles
\cite{Bo99, Mu95}. These are joint probability
densities for $n$ particles on the positive
real line of the form
\begin{equation} \label{eq:MBensembles}
\frac{1}{Z_n} \prod_{1\leq i<j \leq n} (x_i-x_j) (x_i^{\theta} - x_j^{\theta})
\prod_{j=1}^n e^{-nV(x_j)}, \qquad \text{ all } 
x_j > 0 \end{equation}
where $\theta > 0$ is a positive constant and 
$Z_n$ is a normalization factor.
In the large $n$ limit the particles are distributed
according to the minimizer of \eqref{eq:MBEP}, see
\cite{ClRo14, FoLiZi15} and see \cite{ClGiSt19,FoWa17,KuMo19,Mo20+} for some
recent contributions on Muttalib-Borodin
ensembles \eqref{eq:MBensembles}. We use the characterization
of $\mu_1$ via the equilibrium problem from Proposition~\ref{prop311} 
in the proofs of parts (c) and (d) of Theorem \ref{theorem23}.

The proof  of Proposition \ref{prop311} follows
along the lines of the proof of Theorem 1.1 in  \cite{Ku16}.

\begin{proof}[Proof of Proposition \ref{prop311}.]
For a probability measure $\nu$ on $[0,\infty)$ 
we define
\begin{equation} \label{eq:Jnu} 
 J(\nu)  = \min_{\nu_2, \ldots, \nu_r}
	  	 \left( \sum_{j=2}^r I(\nu_j) - \sum_{j=1}^r
	  	 I(\nu_j, \nu_{j+1}) \right)	
	\end{equation}
where $\nu_1 = \nu$ and $\nu_{r+1} = \mu_{r+1} =
\frac{r+t}{t} \delta_{(-1)^r q}$.
The  minimization is over all $\nu_2, \ldots, \nu_r$
satisfying the support condition \eqref{eq:Deltaj} and
the total mass condition \eqref{eq:massmuj}, i.e.,
$\supp(\nu_j) \subset \Delta_j$ and $\nu_j(\Delta_j)
= 1 + \frac{j-1}{t}$ for $j=2, \ldots, r$.

This is again a weakly admissible vector
equilibrium problem, similar to the VEP from Definition \ref{def21}, 
and it has a unique solution. It is simpler to solve, since only 
positive measures are involved and we can be sure that the 
minimizers $\nu_2, \ldots, \nu_r$ have full supports, with the property 
\begin{equation} \label{eq:Unuj} 
	2 U^{\nu_j} = U^{\nu_{j-1}} + U^{\nu_{j+1}}
	\quad \text{on } \Delta_j, 
	\quad \text{ for } j= 2, \ldots, r, \end{equation}
see also \eqref{eq:Umuj}. 

From \eqref{eq:Unuj} we obtain 
\begin{align*} I(\nu_j) & = \int U^{\nu_j} d\nu_j  = \frac{1}{2} \int \left(U^{\nu_{j-1}} - U^{\nu_{j+1}}
	\right) d\nu_j \\
	& = \frac{1}{2} I(\nu_{j-1}, \nu_j) +  \frac{1}{2} I(\nu_j, \nu_{j+1}), \qquad j = 2, \ldots, r. \end{align*}
Hence from \eqref{eq:Jnu}
\begin{align} \nonumber 
J(\nu)  & = - \frac{1}{2} I(\nu, \nu_2) - \frac{1}{2} I(\nu_r, \nu_{r+1}) \\ \label{eq:Jnu2} 
	& = - \frac{1}{2} \int U^{\nu_2} d\nu	
	- \frac{r+t}{2t} U^{\nu_r}((-1)^r q), 
\end{align}
where $\nu_2$ and $\nu_r$ are from the minimizer
$(\nu_2, \ldots, \nu_r)$ associated with $\nu$. We are going to calculate $U^{\nu_2}$
and $U^{\nu_r}$.

We first do this for a point mass $\nu = \delta_p$ 
with $p > 0$, and the general case is obtained by averaging over $p$.
So let $(\nu_2, \ldots, \nu_r)$ be the minimizer
associated with $\nu = \nu_1 =\delta_p$. 
We use the Riemann surface 
$\mathcal S$ with $r$ sheets $\mathcal S^{(j)}$, $j=1, \ldots, r$, given by
\begin{equation} \label{eq:RSS} 
	\begin{aligned} 
	\mathcal S^{(1)} & = \mathbb C \setminus (-\infty,0], \\
	\mathcal S^{(j)} & = \mathbb C \setminus \mathbb R, && \text{for } j=2, \ldots, r-1, \\
	\mathcal S^{(r)} & = \mathbb C \setminus 
	\left((-1)^r [0,\infty)\right).
	\end{aligned} \end{equation}
Sheet $S^{(j)}$ is connected to sheet $S^{(j-1)}$ along the cut $\Delta_j = (-1)^j [0,\infty)$
for $j=2, \ldots, r$. We add a point at infinity to 
obtain a compact Riemann surface.

We define a function $\Psi$ on $\mathcal S$ by its restriction $\Psi^{(j)}$
to the $j$th sheet as follows.
\begin{equation} \label{eq:PsionS} \begin{aligned}
	\Psi^{(1)}(z) &  = \frac{z}{z-p} - z \int \frac{d\nu_2(s)}{z-s}, \\
	\Psi^{(j)}(z) & =  z \int \frac{d\nu_j(s)}{z-s} -  z \int \frac{d\nu_{j+1}(s)}{z-s},
		\quad \text{for } j=2, \ldots, r-1, \\
	\Psi^{(r)}(z) & =  z \int \frac{d\nu_r(s)}{z-s} - \frac{r+t}{t} \frac{z}{z-(-1)^rq}.
	\end{aligned} \end{equation} 
The conditions \eqref{eq:Unuj} imply that $\Psi$ is meromorphic on $\mathcal S$
with poles at $z=p$ on the first sheet and at $z=(-1)^r q$ on the $r$th sheet, see also the discussion
after Definition \ref{def26} that shows why 
$\Phi$ is meromorphic on $\mathcal R$. 
[The construction of $\Psi$ is similar to that of $\Phi$.]
Due to the total masses of the measures we have $\Psi(z) 
\to - \frac{1}{t}$ as $z \to \infty$. 

The Riemann surface \eqref{eq:RSS} has a simple parametrization $z = w^r$, and in the $w$ variable
the poles are at $w= p^{1/r}$ and $w = - q^{1/r}$. Taking into account the residues 
at the poles and the behavior at infinity, we find that
\begin{equation} \label{eq:Psi} 
	\Psi(z) = \frac{1}{r} \frac{p^{1/r}}{w-p^{1/r}} + \frac{r+t}{rt} 
	\frac{q^{1/r}}{w + q^{1/r}} - \frac{1}{t},
	\qquad z =  w^r.
	\end{equation} 
Observe also that $\Psi(z) = 0$ for $z=w=0$.	

Specifying \eqref{eq:Psi} to the first sheet, and
recalling \eqref{eq:PsionS} we find
\begin{align} \nonumber  \int \frac{d\nu_2(s)}{z-s} 
	& = \frac{1}{z-p} - \frac{1}{z} \Psi^{(1)}(z) \\
	& = \frac{1}{z-p} - \frac{1}{rz} \frac{p^{1/r}}{z^{1/r}-p^{1/r}}
		- \frac{r+t}{rtz} \frac{q^{1/r}}{z^{1/r} + q^{1/r}}
		- \frac{1}{tz} \end{align}
with principal branch of the fractional powers.
We integrate  with respect to $z$ and
find after straightforward calculation
\begin{multline} \label{eq:logpotnu2}
	\int \log(z-s) d\nu_2(s) 
	= \log(z-p) - \log\left(z^{1/r} - p^{1/r}\right)
		+ \frac{r+t}{t} \log \left(z^{1/r} + q^{1/r}
		\right) \end{multline}
There is no constant of integration since both sides
behave as $(1 + t^{-1}) \log z + o(1)$
as $z \to \infty$.
The real part of \eqref{eq:logpotnu2} gives us
the logarithmic potential 
\begin{equation} \label{eq:pUnu2} 
	U^{\nu_2}(z) = \log \left| \frac{z^{1/r} - p^{1/r}}{z-p}
\right|  - \frac{r+t}{t} \log \left| z^{1/r} + q^{1/r} \right|. \end{equation}

An analogous calculation,  based on \eqref{eq:Psi} and 
the expression
\eqref{eq:PsionS} of $\Psi$ on the $r$th sheet,
leads to the logarithmic potential of $\nu_r$,
\begin{equation} \label{eq:pUnur} 
U^{\nu_r}(z) = \frac{r+t}{r} \log \left| \frac{z^{1/r} + q^{1/r}}{z-(-1)^r q}
\right|  - \log \left| z^{1/r} - p^{1/r} \right|
\end{equation}
with the branch of the $r$th root that is analytic 
on $\mathbb C \setminus \Delta_r$ and that is real 
and negative for real $z \in \mathbb C \setminus \Delta_r$.
However, we emphasize that $p^{1/r}$ and $q^{1/r}$ always
denote the positive $r$th roots. Similarly when we
write $x^{1/r}$ with $x > 0$ as for example in \eqref{eq:MBEP}
and in \eqref{eq:Unur} below.

Thus \eqref{eq:pUnu2} and \eqref{eq:pUnur} 
give the logarithmic potentials of $\nu_2$ and $\nu_r$
associated with $\delta_p$.
Associated with a general probability measure $\nu = \nu_1$ on
$[0,\infty)$, we then have measures $\nu_2$ and $\nu_r$
whose logarithmic potentials are obtained from 
averaging \eqref{eq:pUnu2} and \eqref{eq:pUnur}
over $p$, that is
\begin{equation} \label{eq:Unu2} 
U^{\nu_2}(z) = \int \log \left| \frac{z^{1/r} - x^{1/r}}{z-x}
\right| d\nu(x)  - \frac{r+t}{t} \log \left| z^{1/r} + q^{1/r} \right|\end{equation}
and
\begin{equation} \label{eq:Unur} 
U^{\nu_r}(z) = \frac{r+t}{r} \log \left| \frac{z^{1/r} + q^{1/r}}{z-(-1)^r q}
\right|  - \int \log \left| z^{1/r} - x^{1/r} \right| d\nu(x).
\end{equation}
From \eqref{eq:Unu2} we obtain
\begin{equation} \label{eq:Inu12}
	I(\nu, \nu_2) = I(\nu) - I_{1/r}(\nu)
		- \frac{r+t}{t} \int \log |x^{1/r} + q^{1/r}| d\nu(x),
\end{equation}
and from \eqref{eq:Unur} we obtain, noting that $z^{1/r}$
in \eqref{eq:Unur} is negative for $z \in \mathbb R \setminus \Delta_r$,
and thus in particular for $z = (-1)^r q$,
\begin{align} \nonumber 
	U^{\nu_r}((-1)^r q) &  \nonumber
	= \frac{r+t}{t}  \log
		\lim_{x \to q} \left| \frac{x^{1/r} - q^{1/r}}{x-q} \right| 
		-  \int \log \left| -q^{1/r} - x^{1/r} \right| d\nu(x) \\
	& = \frac{r+t}{t}  \log
		\left( \frac{1}{r} q^{1/r-1} \right)
		- \int \log \left(x^{1/r} + q^{1/r}\right) d \nu(x). 	 
\label{eq:Unurinq}
\end{align}

Using \eqref{eq:Inu12} and \eqref{eq:Unurinq} in
\eqref{eq:Jnu2} we obtain
\begin{multline} \label{eq:Jnu3}
	J(\nu) =  - \frac{1}{2} I(\nu) + \frac{1}{2} I_{1/r}(\nu) 
	+ \frac{r+t}{t} \int \log \left(x^{1/r} + q^{1/r}\right) d\nu(x) \\
- \frac{1}{2} \left(\frac{r+t}{t}\right)^2  \log
\left( \frac{1}{r} q^{1/r-1} \right). 
\end{multline} 

Finally, comparing \eqref{eq:Jnu} with the 
energy functional \eqref{eq:VEPF2} we obtain for a given $\nu$
on $[0,\infty)$ that
\[ \min_{\nu_2, \ldots, \nu_r} 
	\mathcal E(\nu, \nu_2, \ldots, \nu_r)
		= I(\nu) -  I(\nu,\mu_0) + J(\nu) \]
so that in view of \eqref{eq:Jnu3} and noting
that $\mu_0$ is given by \eqref{eq:mu0}
\begin{multline} \label{eq:minEnu} 
	\min_{\nu_2, \ldots, \nu_r}
	\mathcal E(\nu, \nu_2, \ldots, \nu_r)
	 	=  \frac{1}{2} I(\nu) + \frac{1}{2} I_{1/r}(\nu) \\
		- \frac{1-t}{t} \int \log \left(x+q^{-1}\right) d\nu(x)  
		+ \frac{r+t}{t} \int \log \left(x^{1/r} + q^{1/r}
		\right) d\nu(x) \\
		- \frac{1}{2} \left(\frac{r+t}{t}\right)^2  \log
		\left( \frac{1}{r} q^{1/r-1} \right).  
		\end{multline} 
The left-hand side of \eqref{eq:minEnu} as a functional
on probability measures $\nu$ on $[0,\infty)$ attains
its minimum at $\nu= \mu_1$. Since the
last term on the right in \eqref{eq:minEnu} is only a constant,
independent of $\nu$, the proposition follows.
\end{proof}
	
\subsubsection{Minimum of $V$ is in the support}
\label{subsec332}
In the next step we discuss a general fact about the minimizer for a Muttalib-Borodin
type energy functional \eqref{eq:MBEP}
with $\theta > 0$, and where $V :[0,\infty) \to \mathbb R$ is continuous with
\[ \liminf_{x \to \infty} \left( V(x) - (1+\theta) \log x 
\right) \geq - \infty. \]
Under this condition there is a unique probability
measure  $\mu$ on $[0,\infty)$ that minimizes \eqref{eq:MBEP}.

The following is well-known for the case $\theta =1$, but
apparently has not been observed for general $\theta$.
\begin{lemma} \label{lemma312}
	If $x_0 \geq 0$ is such that $V(x_0) = \min\limits_{x \geq 0}
	V(x)$ then $x_0 \in \supp(\mu)$ where $\mu$ is the 
	probability measure that minimizes \eqref{eq:MBEP}.
\end{lemma}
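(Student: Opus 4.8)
The plan is to argue by contradiction: suppose $x_0 \notin \supp(\mu)$ and perturb $\mu$ slightly towards $x_0$ to decrease the energy functional \eqref{eq:MBEP}, contradicting minimality. Concretely, write the energy as $E(\mu) = \tfrac12 I(\mu) + \tfrac12 I_\theta(\mu) + \int V\,d\mu$. If $x_0$ is not in the support, there is a small closed neighborhood $K$ of $x_0$ in $[0,\infty)$ with $\mu(K) = 0$. Pick any point $y^* \in \supp(\mu)$ with $V(y^*) > V(x_0)$ — such a point exists unless $V$ is constant on $\supp(\mu)$ equal to its global minimum, in which case $x_0$ lies in the support trivially (or one handles this degenerate case separately). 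For small $\varepsilon > 0$ take a probability measure $\mu_\varepsilon = \mu - \varepsilon\, \mu|_{B} + \varepsilon\, \tau$, where $B$ is a small neighborhood of $y^*$ with $\mu(B) > 0$, the subtracted part is $\mu$ restricted to $B$ and rescaled to total mass one, and $\tau$ is a probability measure supported in $K$ (for instance normalized Lebesgue measure on $K$, or a point mass at $x_0$ if we are willing to allow that — but a smeared measure keeps all energies finite).

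The key computation is to expand $E(\mu_\varepsilon) - E(\mu)$ to first order in $\varepsilon$. Writing $\eta = \tau - \tfrac{1}{\mu(B)}\mu|_B$ (a signed measure of total mass zero), we get
\[
E(\mu_\varepsilon) - E(\mu) = \varepsilon\left( \int U^\mu_{\mathrm{sym}}\, d\eta + \int V\, d\eta \right) + O(\varepsilon^2),
\]
where $U^\mu_{\mathrm{sym}}$ denotes the appropriate ``effective potential'' coming from differentiating $\tfrac12 I(\mu) + \tfrac12 I_\theta(\mu)$, namely $\tfrac12 U^\mu(x) + \tfrac12 \int \log\frac{1}{|x^\theta - s^\theta|}\, d\mu(s)$. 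The $O(\varepsilon^2)$ term is the self-energy $\tfrac12 I(\eta) + \tfrac12 I_\theta(\eta)$, which is finite because $\eta$ has compact support and $\tau, \mu|_B$ both have no atoms (or, if $\tau = \delta_{x_0}$, one must check $I_\theta(\delta_{x_0})$ is still $-\infty\cdot 0$-type harmless — safer to smear). The first-order coefficient can be made negative: by the Euler--Lagrange (variational) conditions for the minimizer $\mu$, the effective potential plus $V$ equals a constant $\ell$ on $\supp(\mu)$ and is $\geq \ell$ off the support; hence on $B$ the integrand $\tfrac12 U^\mu + \tfrac12(\text{$\theta$-part}) + V$ equals $\ell$, while at points of $K$ near $x_0$ it is $\geq \ell$ but we have no upper bound — so a naive perturbation does not obviously work.

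This is the main obstacle, and it is exactly why the statement is nontrivial: moving mass \emph{onto} $x_0$ raises the self-interaction there. The resolution, following the $\theta = 1$ folklore argument, is to instead move mass \emph{away} from $x_0$'s neighborhood is vacuous (there is no mass there), so one should argue the other direction: if $x_0 \notin \supp(\mu)$, consider the effective potential $W(x) = \tfrac12 U^\mu(x) + \tfrac12 \int \log\frac{1}{|x^\theta - s^\theta|} d\mu(s) + V(x)$ and use that $W \geq \ell$ everywhere with equality on $\supp(\mu)$. The point is that $W$ is lower semicontinuous and, near $x_0$, the two logarithmic-potential terms are \emph{continuous} (since $\mu$ puts no mass near $x_0$, both $U^\mu$ and the $\theta$-potential are real-analytic there). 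So near $x_0$, $W(x) = (\text{real-analytic}) + V(x)$, and $W(x_0) \geq \ell$. On the other hand, at any point $y \in \supp(\mu)$ we have $W(y) = \ell$, i.e. $\tfrac12 U^\mu(y) + \tfrac12(\text{$\theta$-part})(y) = \ell - V(y) \leq \ell - V(x_0)$ because $V(x_0)$ is the global minimum. Meanwhile the logarithmic energy terms satisfy a maximum-principle / domination bound: $\tfrac12 U^\mu + \tfrac12(\text{$\theta$-potential})$ attains its supremum over $[0,\infty)$ on $\supp(\mu)$ — this is the analogue of the classical principle of domination, and it holds for the $\theta$-energy because $x \mapsto \log\frac{1}{|x^\theta - s^\theta|}$ is, after the substitution $x = u^{1/\theta}$, a genuine logarithmic kernel (this is precisely the change of variables used elsewhere in the paper, e.g. around \eqref{eq:Itheta}). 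Therefore $\tfrac12 U^\mu(x_0) + \tfrac12(\text{$\theta$-part})(x_0) \leq \sup_{\supp(\mu)} \leq \ell - V(x_0)$, which forces $W(x_0) \leq \ell$. Combined with $W(x_0) \geq \ell$ this gives $W(x_0) = \ell$.

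The final step is to upgrade ``$W(x_0) = \ell$'' to ``$x_0 \in \supp(\mu)$''. Here one uses that $W$ is real-analytic in a neighborhood of $x_0$ (established above) together with $W \geq \ell$ and $W(x_0) = \ell$: so $x_0$ is an interior minimum of the analytic function $W$ restricted to a neighborhood, hence $W \equiv \ell$ on a whole neighborhood of $x_0$ in $\mathbb{R}$ — \emph{unless} $x_0 = 0$ is a boundary point, in which case one gets $W \equiv \ell$ on $[0,\delta)$ for some $\delta > 0$ by the same analyticity argument (an analytic function on an interval that attains its minimum at the left endpoint with the minimum value persisting... actually one needs $W'(0^+) = 0$; if $W'(0^+) > 0$ then $W > \ell$ immediately to the right and we'd need a different argument, but then one perturbs: since $W(0) = \ell$ exactly and $W$ is analytic, either $W \equiv \ell$ near $0$ or $0$ is isolated in $\{W = \ell\}$, and in the latter case the variational inequality still allows $0 \in \supp(\mu)$). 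In the generic interior case, $W \equiv \ell$ on an open interval $J \ni x_0$ means $\Delta W = 0$ there in the distributional sense; but $\Delta\big(\tfrac12 U^\mu\big) = -\pi \mu$ (up to normalization) and the $\theta$-potential contributes a similar term via the change of variables, while $\Delta V$ is some fixed analytic function — forcing $\mu$ to have a specific nonzero density on $J$, contradicting $\mu(K) = 0$ for $K \subset J$. That contradiction completes the proof. I expect the genuinely delicate points to be: (i) verifying the domination/maximum principle for the mixed $I + I_\theta$ energy, which should follow by the $x \mapsto x^{1/\theta}$ substitution reducing everything to a classical logarithmic-potential statement, and (ii) the boundary case $x_0 = 0$, which needs a small separate argument but does not affect Theorem~\ref{theorem23}(c) since there one applies the lemma with $x_0$ the minimizer of the explicit $V$ in \eqref{eq:Vx} and checks directly whether it equals $0$.
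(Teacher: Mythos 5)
Your second approach is in the right spirit and at a high level matches the paper's: compare $h(x_0)+V(x_0)$ to the constant $\ell$, where $h=U^{\mu}+U^{\mu}_{\theta}$, using a maximum-principle argument for $h$ and the fact that $V(x_0)\leq V(x_1)$ for any $x_1\in\supp(\mu)$. (As a side remark, the variational condition for $\tfrac12 I+\tfrac12 I_{\theta}+\int V$ carries coefficient $1$ in front of each potential, not $\tfrac12$, because the quadratic energy terms already have the $\tfrac12$ built in; your $W$ should be $U^{\mu}+U^{\mu}_{\theta}+V$.) However, the execution contains genuine gaps in three places.

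First, your justification of the domination claim does not hold: the substitution $x=u^{1/\theta}$ turns $U^{\mu}_{\theta}$ into an ordinary logarithmic potential, but it simultaneously turns $U^{\mu}$ into a $(1/\theta)$-potential, so you get back the same mixed kernel and no reduction to a classical principle of domination. The correct route, which is what the paper uses, is to note that both $U^{\mu}(z)$ and $U^{\mu}_{\theta}(z)$ (with the principal branch of $z^{\theta}$) extend to harmonic functions in $\mathbb{C}\setminus\bigl((-\infty,0]\cup\supp(\mu)\bigr)$, so $h$ is harmonic there and tends to $-\infty$ at infinity; the maximum principle then confines the supremum to $(-\infty,0]\cup\supp(\mu)$. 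You also never address why the maximum cannot occur on $(-\infty,0]$; the paper does so via the elementary bound $h(-x)<h(x)$ for $x>0$. Second, and more seriously, your argument only yields the non-strict inequality $h(x_0)\leq\sup_{\supp\mu}h$, hence $W(x_0)=\ell$ rather than a contradiction, and the attempted rescue through analyticity is flawed: an interior minimum of a real-analytic function does not force it to be locally constant (take $(x-x_0)^2+\ell$), so the step ``$W\equiv\ell$ near $x_0$'' is unjustified and the Laplacian computation that follows has nothing to stand on. The clean fix is the \emph{strong} maximum principle: since $h$ is a non-constant harmonic function on the open set $\mathbb{C}\setminus\bigl((-\infty,0]\cup\supp(\mu)\bigr)$ and $x_0$ is assumed to lie in that open set, one has $h(x_0)<h(x_1)$ for the boundary maximizer $x_1\in\supp(\mu)$, whence $W(x_0)<\ell$, directly contradicting the variational inequality $W\geq\ell$; no analyticity argument or separate boundary-case analysis at $x_0=0$ is needed.
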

\begin{proof}
	In this proof we use the notation
	\[ U_{\theta}^{\mu}(x) = \int \log \frac{1}{|x^{\theta}-s^{\theta}|} d \mu(s). \]
	
	The minimizer $\mu$ satisfies, for some constant $\ell$,
	\begin{equation}  \label{eq:lem313proof1} 
		U^{\mu}(x) + U_{\theta}^{\mu}(x) + V(x)
		\begin{cases} = \ell, & \text{ on } \supp(\mu), \\
		\geq \ell & \text{ on } [0,\infty).
		\end{cases} \end{equation}
	Now,
	\[ h(x) = 
	U^{\mu}(x) + U_{\theta}^{\mu}(x) 
	= \int \log  \frac{1}{|x-s|} d\mu(s)
		+ \int \log \frac{1}{|x^{\theta} - s^{\theta}|} d\mu(s),
		\]
	extends into the complex plane where we use the principal branch of $x^{\theta}$,
	i.e., with a branch cut along $(-\infty,0]$.
	Then $h$ is harmonic in 
	$\mathbb C \setminus ((-\infty,0] \cup \supp(\mu))$
	and it tends to $-\infty$ as $|x| \to \infty$.
	By the maximum principle for harmonic functions,
	the maximum of $h$ is attained 
	on  $\supp(\mu) \cup (-\infty,0]$ only.
	
	For $x > 0$ and $s > 0$, it is easy to see
	that $|-x-s| > |x-s|$ and $|(-x)^{\theta} - s^{\theta}|
	= |x^{\theta} e^{\pi i \theta} - s^{\theta}| > |x^{\theta}-s^{\theta}|$. 
	Therefore 
	$ U^{\mu}(-x) < U^{\mu}(x)$ and
	$ U^{\mu}_{\theta}(-x) < U^{\mu}_{\theta}(x)$ for $x > 0$
	which means that $h(-x) < h(x)$ for $x > 0$, and
	therefore the maximum of $h$ is  not attained 
	on $(-\infty,0]$.
	
	Thus the maximum of $h$ is attained on $\supp(\mu)$,
	say at $x_1 \in \supp(\mu)$.
	
	If $x_0 \not\in \supp(\mu)$, then $h(x_0) < h(x_1)$,
	and since $V(x_0) \leq V(x_1)$ by the assumption in
	the lemma, we have a strict inequality 	
	\begin{equation} \label{eq:lem313proof2} 
		h(x_0) + V(x_0) < h(x_1) + V(x_1). 
		\end{equation}
	Since $x_1 \in \supp(\mu)$ the right-hand side of \eqref{eq:lem313proof2}
	is equal to $\ell$, by the equality in 
	\eqref{eq:lem313proof1} 
	It follows that
	\[ U^{\mu}(x_0) + U^{\mu}_{\theta}(x_0) + V(x_0)
	< \ell \]
	which contradicts the inequality in
	\eqref{eq:lem313proof1}. Therefore $x_0 \in \supp(\mu)$.		
\end{proof}

\subsubsection{Proof of part (c) of Theorem \ref{theorem23}.}
\label{subsec34}

In view of Proposition \ref{prop311} and 
Lemma \ref{lemma312} it is enough to show that
the external field \eqref{eq:Vx} attains
its minimum at $x=0$ in case $0 < q < 1$.
This is what we do in the next lemma, and then
part (c) follows.

\begin{lemma} \label{lemma313}
	Let $V$ be given by \eqref{eq:Vx} with $0  < q < 1$. 
	Then for every $t \in (0,1)$
	it is true that
	\[ V(0) = \min_{x \geq 0} V(x). \]
\end{lemma}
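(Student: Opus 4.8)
The plan is to prove directly that $V(x) \ge V(0)$ for every $x \ge 0$, rather than to study the sign of $V'$; note that $V$ itself need not be monotone on $[0,\infty)$ for small $t$, so a monotonicity argument would not suffice and one should work with the integrated inequality. First I would factor $x + q^{-1} = q^{-1}(1+qx)$ and $x^{1/r}+q^{1/r} = q^{1/r}\bigl(1+q^{-1/r}x^{1/r}\bigr)$, so that the constant factors are absorbed into $V(0)$ and
\[
V(x) - V(0) = -\frac{1-t}{t}\log(1+qx) + \frac{r+t}{t}\log\!\bigl(1 + q^{-1/r}x^{1/r}\bigr).
\]
Multiplying by $t>0$, it then remains to check the inequality
\[
(r+t)\log\!\bigl(1 + q^{-1/r}x^{1/r}\bigr) \;\ge\; (1-t)\log(1+qx), \qquad x \ge 0.
\]

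The main step is the substitution $u = x^{1/r} \ge 0$ (all fractional powers $q^{1/r}$, $q^{-1/r}$, $x^{1/r}$ taken as positive real roots, as in the paper), which turns the target inequality into
\[
(r+t)\log\!\bigl(1 + q^{-1/r}u\bigr) \;\ge\; (1-t)\log\!\bigl(1 + qu^{r}\bigr).
\]
I would then bound the right-hand side in three elementary steps. Since $0<q<1$ we have $q \le q^{-1}$, hence $1 + qu^{r} \le 1 + q^{-1}u^{r} = 1 + (q^{-1/r}u)^{r}$. Next, the binomial inequality $1 + w^{r} \le (1+w)^{r}$ for $w \ge 0$ and integer $r \ge 2$, applied with $w = q^{-1/r}u$, gives $1 + (q^{-1/r}u)^{r} \le (1+q^{-1/r}u)^{r}$. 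Taking logarithms and using $1-t \ge 0$ yields
\[
(1-t)\log\!\bigl(1 + qu^{r}\bigr) \;\le\; (1-t)\,r\,\log\!\bigl(1 + q^{-1/r}u\bigr).
\]
Finally $(1-t)r = r - rt \le r \le r+t$, and $\log(1 + q^{-1/r}u) \ge 0$ because $u \ge 0$, so the right-hand side above is at most $(r+t)\log(1+q^{-1/r}u)$, which is exactly the bound we wanted. Via Proposition~\ref{prop311} and Lemma~\ref{lemma312} this then gives $0 \in \supp(\mu_1)$, completing part (c) of Theorem~\ref{theorem23}.

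I do not expect a serious obstacle: every inequality in the chain is completely elementary, and the hypothesis $0<q<1$ is used only in the single step $q \le q^{-1}$. The one point requiring care is the bookkeeping of the fractional powers, namely carrying out the substitution $u = x^{1/r}$ consistently so that $q^{-1/r}x^{1/r} = q^{-1/r}u$ and $qx = qu^{r}$, and checking that the logarithm being multiplied by $(1-t)r$ has a definite sign.
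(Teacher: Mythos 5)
Your proof is correct and uses the same two elementary ingredients as the paper (the binomial inequality $(1+w)^r \geq 1 + w^r$ and the single use of $0<q<1$ via $q \leq q^{-1}$, equivalently $x \geq q^2x$); after unwinding the substitution $u = x^{1/r}$, the chain of inequalities is exactly what the paper's proof establishes, just with the constant factors cleared at the outset rather than tracked through to the cancellation of the $\log q$ terms. The only cosmetic difference is that the paper arrives at the slightly stronger lower bound $V(x) - V(0) \geq (1+\tfrac{1}{r})\log(1+qx)$, whereas you stop at nonnegativity, but both suffice.
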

\begin{proof}
	Note that $\left(x^{1/r} + q^{1/r}\right)^r 
	\geq x + q$ for $x \geq 0$ by the binomial theorem,
	so that by \eqref{eq:Vx}
	\begin{multline} \label{eq:VxminV0} 
	t V(x) - t V(0) \\ \geq 
	- (1-t) \log\left(x+q^{-1}\right) + 
	\frac{r+t}{r} \log\left(x+ q\right)
	- \left(2- t + \frac{t}{r}\right) \log q. 
	\end{multline}
	Since $0 < q < 1$, we have $x+q  \geq q^2x + q$ for $x \geq 0$
	and therefore we can estimate \eqref{eq:VxminV0}
	further to obtain
	\begin{multline*}  
	t V(x) - t V(0) \\ \geq 
	- (1-t) \log\left(x+q^{-1}\right) + 
	\frac{r+t}{r} \log\left(q^2x+ q\right)
	- \left(2- t + \frac{t}{r}\right) \log q.
	\end{multline*}
	This means
	$V(x) -  V(0) \geq \left(1 + \frac{1}{r}\right) \log\left(qx+1 \right)$  
	and the lemma follows.
\end{proof}

\subsection{Proof of part (d)}
\label{subsec34}

The measure $\mu_1$ minimizes
$I(\mu) - I(\mu,\mu_0+\mu_2)$ among all probability
measures on $[0,\infty)$. Thus it is the equilibrium measure in the external field $-U^{\mu_0+\mu_2}$
which is real analytic on $[0,\infty)$. Then
it follows from \cite{DeKrMc98} that $\mu_1$
is absolutely  continuous  with respect to Lebesgue
measure with a density that is real analytic 
in the interior of its support.

From \cite{DeKrMc98} it also follows that
the density at the endpoint $x_1$ (in $\BIS$ and $\TIS$
cases) behaves as $\approx  c (x_1-x)^{\frac{1}{2} +2N}$
as $x \to x_1-$, for a certain non-negative integer $N$.  
From the iterated balayage it can be seen that $N=0$.
Indeed, the algorithm in section \ref{subsec314}
gives us the sequence of signed measures $(\nu_k)_k$ 
with densities that are such that
$\sqrt{x} \frac{d\nu_k}{dx}$ is positive and 
strictly decreasing
on $[0,x_1]$ by Lemma \ref{lemma34}~(b).
The proof of that lemma (see \eqref{eq:lem32proof3})) actually shows that
the decrease gets stronger as $k$ increases.
Since $\nu_k \to \mu_1$ as $k \to \infty$,
it then follows that the density of $\mu_1$
cannot have a zero derivative at $x_1$,
and thus it vanishes as a square root at $x_1$ 
in $\BIS$ and $\TIS$ cases.

Similarly, the density vanishes as a square root
at $x_2$ in $\UIS$ and $\TIS$ cases.

\section{Proof of Theorem \ref{theorem28}} \label{sec4}

\subsection{Properties of $\Phi$} \label{subsec41}

We start by listing a number of properties of
the meromorphic function $\Phi$ from Definition \ref{def26}. 

\subsubsection{Zeros and poles} \label{subsec411}

\begin{lemma} \label{lemma41} 
	Let $q > 0$.
	The function $z \Phi$ is a degree $2$ meromorphic function
	on the Riemann surface $\mathcal R$ with
	the following properties.
	\begin{enumerate}
	\item[\rm (a)] $z \Phi \to 1$ as $z \to \infty$
		on any of the sheets. 
	\item[\rm (b)] It has simple poles at $z=-q^{-1}$ on the first sheet
	and at $z= (-1)^r q$ on the last sheet, and no
	other poles.
	\item[\rm (c)] Suppose one of the $\BIS$, $\TIS$, 
	or $\FIS$ cases, so that $z \in \supp(\mu_1)$. Then $z \Phi$ has simple zeros at $z=0$ and
	at a point $x_0$ on the first sheet with 
	\begin{equation} \label{eq:x0interval}
	-q^{-1} < x_0 < 0, \end{equation}
	and no other zeros.  In $\UIS$ case, there are two
	points on the Riemann surface with $z=0$
	and  $z \Phi$ has a simple zero at each of these.
	\end{enumerate}	
	
\end{lemma}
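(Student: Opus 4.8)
The plan is to read everything off from the analytic structure of the Stieltjes transforms $F_j$ together with the rationality of $F_0,F_{r+1}$. Each $F_j$ with $1\le j\le r$ is the Cauchy transform of a positive measure of mass $m_j=1+\frac{j-1}{t}$, so $F_j(z)=m_j/z+O(z^{-2})$ at infinity on every sheet, and the same holds for $F_0,F_{r+1}$ with $m_0=1-\frac1t$, $m_{r+1}=1+\frac rt$. Since $t(m_j-m_{j-1})=1$ for $j=1,\dots,r+1$, \eqref{eq:Phij} gives $z\Phi^{(j)}(z)\to1$ on each sheet, which is (a), and in particular $z\Phi$ has no pole at any point at infinity. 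Next, $F_1,\dots,F_r$ have no poles, so the only poles of $\Phi$ come from the rational functions $F_0,F_{r+1}$: a simple pole of $\Phi^{(1)}$ at $-q^{-1}$ with residue $1-t>0$, and a simple pole of $\Phi^{(r+1)}$ at $(-1)^rq$ with residue $r+t$, and both points lie on the indicated sheets because $-q^{-1}\notin[0,\infty)$ and $(-1)^rq\notin(-1)^{r-1}[0,\infty)$. Multiplying by $z$ keeps these two poles simple and adds no pole at infinity, so $z\Phi$ has exactly two simple poles and is of degree $2$; this proves (b) and reduces (c) to locating precisely two zeros.

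For a zero over $z=0$ I would write $z\Phi^{(1)}(z)=tzF_1(z)+\frac{(1-t)z}{z+q^{-1}}$ and note that, letting $z\to0$ along the negative real axis (which lies on sheet $1$ in all cases), $zF_1(z)=\int\frac{z}{z-s}\,d\mu_1(s)\to0$ by dominated convergence (the integrand is bounded by $1$ and tends to $0$), hence $z\Phi^{(1)}(z)\to0$; by continuity of the meromorphic function $z\Phi$ it therefore vanishes at the point over $z=0$ on sheet $1$. In the $\BIS,\TIS,\FIS$ cases $0\in\supp(\mu_1)$, and since $0$ is an endpoint of all the cuts $\supp(\mu_1),\dots,\supp(\mu_r)$ a short monodromy count shows that all $r+1$ sheets meet at a single branch point of order $r+1$ over $z=0$, so this is the only point there. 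In the $\UIS$ case sheet $1$ is unramified over $0$ while sheets $2,\dots,r+1$ meet at a branch point of order $r$; to see $z\Phi$ vanishes there too, use the trace identity $\sum_{j=1}^{r+1}z\Phi^{(j)}(z)=z\bigl(\frac{r+t}{z-(-1)^rq}+\frac{1-t}{z+q^{-1}}\bigr)$, which tends to $0$ as $z\to0$; subtracting the sheet-$1$ term and using that the $r$ branches of $z\Phi$ at an order-$r$ branch point sum to $r$ times its value there plus $O(z)$, that value is $0$. In the $\UIS$ case we now have zeros at two distinct points over $z=0$, and since $z\Phi$ has degree $2$ they are simple and there are no others — this settles (c) in the $\UIS$ case.

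In the $\BIS,\TIS,\FIS$ cases $z\Phi$ has exactly one further zero, which I would place on $(-q^{-1},0)$ on sheet $1$ by the intermediate value theorem. On that interval $z\Phi^{(1)}$ is real and analytic (it meets neither $\supp(\mu_1)$ nor the sheet-$1$ cuts); it tends to $-\infty$ as $x\downarrow -q^{-1}$ because the simple pole of $z\Phi^{(1)}$ there has negative residue $-q^{-1}(1-t)$; and for $x=-|x|$ close to $0$ it is strictly positive, since $z\Phi^{(1)}(x)=t|x|\,|F_1(x)|-\frac{(1-t)|x|}{q^{-1}-|x|}$ with $|F_1(x)|=\int\frac{d\mu_1(s)}{s+|x|}$ and $\int\frac{d\mu_1(s)}{s}=+\infty$. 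The intermediate value theorem then gives a zero $x_0\in(-q^{-1},0)$ of $z\Phi^{(1)}$; together with the zero over $z=0$ this exhibits two distinct zeros of the degree-$2$ function $z\Phi$, so both are simple and there are no others, which — with \eqref{eq:x0interval} — is exactly (c).

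The main obstacle is the divergence $\int\frac{d\mu_1(s)}{s}=+\infty$, equivalently the blow-up of the density of $\mu_1$ at $0$; it is this, rather than any separate local argument, that rules out the a priori possibility of a double zero at the origin. In the $\BIS$ and $\TIS$ cases it follows from the iterated balayage of Section~\ref{subsec314}: by Lemma~\ref{lemma34}(b) each $v_k$ in the density $\frac{v_k(x)}{2\pi\sqrt x}$ of $\nu_k$ is strictly decreasing and positive on $(0,x_{1,k})$, and since the $\nu_k^+$ decrease to $\mu_1$ while $\mu_1$ is positive on the interior of its support, $v_\infty(x)=2\pi\sqrt x\,\mu_1'(x)$ is non-increasing near $0$ with a strictly positive limit, so $\mu_1'(x)\ge c\,x^{-1/2}$ near $0$ and the integral diverges. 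In the $\FIS$ case the same argument works with $k=1$, or one may invoke directly the singularity $V'(x)\sim c\,x^{1/r-1}$ at $x=0$ of the external field in Proposition~\ref{prop311} to conclude that the density of $\mu_1$ is unbounded at $0$.
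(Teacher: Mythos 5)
The proofs of parts (a) and (b) follow the paper closely. Your treatment of the $\UIS$ case in part (c) is more roundabout than the paper's (the paper just notes directly that $z F_j(z) \to 0$ at each of the two points over $z = 0$, which follows from the Stieltjes-transform form of each $F_j$; your trace identity and branch-averaging argument gets to the same conclusion), but it is correct.

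For part (c) in the $\BIS,\TIS,\FIS$ cases you take a genuinely different route. The paper argues globally on the Riemann surface: $z\Phi$ has degree $2$, so its zero at the branch point over $z = 0$ has order at most $2$; since $z$ vanishes to order $r+1$ there, $\Phi$ has a pole of order at least $r-1 \ge 1$, hence $\Phi^{(1)}$ (and then $F_1$) must blow up as $z \to 0^-$, which immediately gives the sign $\Phi^{(1)}(x) < 0$ for $x < 0$ near $0$. You instead try to establish directly the local fact $\int \frac{d\mu_1(s)}{s} = +\infty$. Your argument for this via Lemma~\ref{lemma34}(b) is sound in the $\BIS$ and $\TIS$ cases.

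However, there is a genuine gap in the $\FIS$ case. Lemma~\ref{lemma33}(b), from which Lemma~\ref{lemma34}(b) inherits the monotonicity, only applies when $0 < x_1 < x_2$; in the $\FIS$ case the balayage $\nu_1$ is already nonnegative with $\supp(\nu_1) = [0,\infty)$, and you have no control on $v(0^+)$ or the monotonicity of $v$ near $0$, so ``the same argument works with $k = 1$'' is not justified. The alternative you offer --- invoking the singularity of $V'$ at $0$ from Proposition~\ref{prop311} --- is also not justified: an unbounded density is not enough, you need a rate (at least $s^{-1/2-\varepsilon}$ for some $\varepsilon>0$, whereas a density of order $s^{-\alpha}$ with $\alpha < 1/2$ would make $\int d\mu_1(s)/s$ converge). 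The precise rate in \eqref{eq:mu1at0} is exactly what is proved in section~\ref{subsec412} \emph{from} Lemma~\ref{lemma41}, so using it here would be circular, and the paper explicitly notes in Remark~\ref{remark42} that the Claeys--Romano asymptotics do not cover this external field. The degree argument on $\mathcal R$ is precisely what lets the paper bypass any a priori quantitative information about the density of $\mu_1$ near $0$ and treat all three cases uniformly; your substitute does not close in the $\FIS$ case.
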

\begin{proof}
	We already noted that $\Phi$ is meromorphic on
	$\mathcal R$, see the discussion after Definition \ref{def26}. Thus also $z \Phi$ is meromorphic 	on $\mathcal R$. In part (b) we
	show that it has two simple poles, and no other poles,
	and therefore its degree is two.
	
\medskip	
	(a) 
	From \eqref{eq:Fj} we have $zF_j(z)  \to  \mu_j(\Delta_j)$
	as $z \to \infty$. In view of the total massses \eqref{eq:massmuj} of the measures and the definition
	\eqref{eq:Phij}, part (a) follows.
	
\medskip
	(b) From \eqref{eq:F0} and \eqref{eq:Phij} we see that
	\begin{align} \nonumber 
	\Phi^{(1)}(z) & = t F_1(z) + \frac{1-t}{z+q^{-1}}
		\\ & = t \int \frac{d\mu_1(s)}{z-s} 
		+ \frac{1-t}{z+q^{-1}}, 
		 \label{eq:residue1} 
	 \\ \label{eq:residue2} 
	\Phi^{(r+1)}(z) & = -t F_r(z) + \frac{r+t}{z-(-1)^{r} q}, \end{align}
	and so $z \Phi$ has simple poles at $-q^{-1}$ on the
	first sheet and at $(-1)^r q$ on the $r+1$-st sheet.
	
	There is no pole at $z=\infty$ because of part (a).
	There is no pole at $z=0$ either, since
	the form  \eqref{eq:Fj} of $F_j$ as  a Stieltjes transform, easily implies that $z F_j(z) \to 0$
	as $z \to 0$. Thus also $z \Phi \to 0$ as $z \to 0$.
	There are no other candidates for poles, and
	therefore the degree is two.
	
\medskip
    (c) We already remarked in part (b) that $z \Phi$    vanishes when $z=0$.
    In $\UIS$ case there are two points on the Riemann
    surface with $z=0$. In that case both of these are
    simple zeros, and there are no other zeros, 
    since the degree is two.
    
    In $\BIS$, $\TIS$, and $\FIS$ cases, there is only one
    point $z=0$, and it is at most a double zero of $z \Phi$. 
	Then $z^{\frac{1}{r+1}}$ is the local coordinate,
	and $z$ (as a function on the Riemann surface) has
	a zero of order $r+1$ at $z=0$. Hence
	$\Phi$ has a pole at $z=0$ of order $\geq r-1$,
	and so $\Phi$ is unbounded at $z=0$ (we may assume $r \geq 2$). 
	Looking on the first sheet, we conclude from \eqref{eq:residue1} that $F_1$ is unbounded at
	$z = 0$, and it dominates the behavior of
	$\Phi^{(1)}$ as $z \to 0$.
	Since $F_1(z) < 0$ for negative real $z$, it
	then follows that $\Phi^{(1)}(z)$ is negative for 
	negative $z$ close to $0$.
	
	From \eqref{eq:residue1} we also see that
	$\Phi^{(1)}(z) \to + \infty$ as $z \to -q^{-1}+$,
	as the residue at the pole is positive.
	Thus $\Phi^{(1)}$ changes sign on
	 the interval $(-q^{-1}, 0)$ and hence there is
	 a zero, say at $x_0 \in (-q^{-1},0)$.
	Then $x_0$ is also a zero of $z \Phi$,
	and we conclude that both $z=x_0$ and $z=0$ 
	are simple zeros, and these are the only zeros,
	as the degree of $z \Phi$ is two.
\end{proof}

\subsubsection{Proof of Theorem \ref{theorem23} (e)}
\label{subsec412}

\begin{proof}
Suppose we are in one of the $\BIS$, $\TIS$, or 
$\FIS$ cases, so that $0 \in \supp(\mu_1)$.
Then $z=0$ is a simple zero of $z \Phi$,
by part (c) of Lemma \ref{lemma41}. Since $z^{\frac{1}{r+1}}$
is a local coordinate, we find that $\Phi$ has a pole
of order $r$ at $z=0$. From  \eqref{eq:residue1}
we then get for some non-zero constant $C$,
\[ F_1(z) = C z^{-\frac{r}{r+1}}\left(1+ O\left(z^{\frac{1}{r+1}}\right)\right) \quad 
\text{ as } z \to 0 \]
and the fractional powers have their branch cut along
$[0,\infty)$. 
By the Stieltjes inversion formula
\[ \frac{d\mu_1}{dx} = -\frac{1}{\pi} 
	 \lim_{\delta \to 0+} \Im F_{1}(x+ i \delta),
	 \qquad x > 0,  
	\]
and \eqref{eq:mu1at0} follows.  	

Finally, \eqref{eq:mu1atinf} follows from \eqref{eq:mu1at0}
and the symmetry between $q$ and $1/q$, see Remark \ref{remark24}. \end{proof}

\begin{remark} \label{remark42}
	The behavior \eqref{eq:mu1at0} is characteristic
	for the density of minimizers of Muttalib-Borodin
	type energy functionals as in \eqref{eq:MBEP}.
	This was proved by Claeys and Romano \cite[Remark 1.9]{ClRo14} under general conditions on the
	external field, which however
	do not cover the case \eqref{eq:Vx}. 
\end{remark}

\subsubsection{Critical points}
We need to know about the critical points, by which
we mean the ramification points of $z \Phi$. There
is no ramification at $z=0$ or $z=\infty$, and so
we may alternatively characterize the critical points
as those points where the derivative of 
$z \Phi^{(j)}(z)$ vanishes for some $j=1, \ldots, r+1$.

Since $z \Phi$ has degree $2$ the Riemann-Hurwitz formula
\cite{Sc14}
tells us that there are two critical points 
in the $\BIS$ and $\FIS$ (genus zero) cases, and four
critical points in the $\TIS$ (genus one) case. 
The following lemma says that they are all real and on the first sheet.

We fix $0 < q < 1$ and we continue to use $x_1, x_2$ 
as in Theorem \ref{theorem23}~(a)
depending on the  various cases, and $x_0$ for the zero
of $\Phi$ on the first sheet as in Lemma~\ref{lemma41}~(c).

\begin{figure}[t]
	\centering
	\begin{overpic}[scale=0.4]{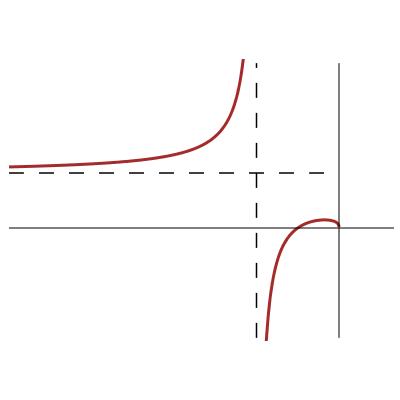}
		\put(54,35){$-q^{-1}$}
		\put(69,50){$x_0$}
		\put(76,35){$y_1$}
		\put(82,54){$1$}
		\put(72,43){$\vdots$}
		\put(79,39){$\vdots$}
		\put(0,75){$z\Phi^{(1)}$ in $\BIS$ case}
	\end{overpic}
	\begin{overpic}[scale=0.4]{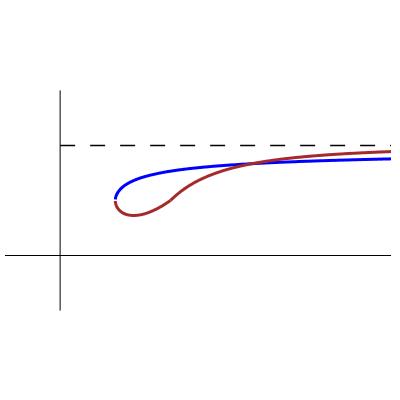}
		\put(25,30){$x_1$}
		\put(32,30){$y_2$}
		\put(10,60){$1$}
		\put(27,36){$\vdots$}
		\put(27,43){$\vdots$}
		\put(33,36){$\vdots$}
		\put(33,39){$\vdots$}
		\put(18,75){$z\Phi^{(1)}$ and $z\Phi^{(2)}$ in $\BIS$ case}
	\end{overpic}
	\caption{Sketches of the graph of $z\Phi$ in $\BIS$ case. The left panel shows the graph of $z \Phi^{(1)}(z)$ on the negative real line where it has a pole at $-q^{-1}$,
	a zero at $x_0$ and a local maximum at $y_1$.
	The right panel shows the graphs of $z \Phi^{(1)}(z)$
	(in brown)
	and $z \Phi^{(2)}$ (in blue) on the interval $[x_1,\infty)$. 
	The graph of $z \Phi^{(1)}(z)$ has a local minimum
	at $y_2$, while the graph of $z \Phi^{(2)}(z)$
	is strictly increasing. Both graphs tend to $1$
	at infinity. 
	The graph of $z\Phi^{(r+1)}(z)$ is as in the right panel of Figure \ref{figure5} below. \label{figure3}}
\end{figure}

\begin{lemma} \label{lemma43}  Let $0 < q < 1$.
	The critical points of $z \Phi$ are on the real part of
	the first sheet of the Riemann surface. 
	\begin{enumerate}
	\item[\rm (a)] In all cases
	there is a critical point $y_1$ with $y_1 \in (x_0,0)$.
	\item[\rm (b)]  In $\BIS$ case there is one
	more critical point $y_2 \in (x_1, \infty)$. 
	\item[\rm (c)]  In $\TIS$ case there are three more
	critical points. A critical point $y_0 \in (-\infty, -q^{-1})$ and two critical points $y_2, y_3
	\in (x_1,x_2)$ with $y_2 < y_3$. 
	\item[\rm (d)] In $\FIS$ case there is one
	more critical point $y_0 \in (-\infty, -q^{-1})$.
\end{enumerate}
\end{lemma}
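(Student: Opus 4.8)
The plan is to combine a Riemann--Hurwitz count with an analysis of the real restrictions of $z\Phi$ on the first sheet. First I would fix the total number of critical points. By Theorem~\ref{theorem23}(d), together with the fact (established in the proof of that part) that the vanishing exponent is exactly $\tfrac12$, the density of $\mu_1$ vanishes like a nonzero multiple of $(x_1-x)^{1/2}$ at $x_1$, and like a nonzero multiple of $(x-x_2)^{1/2}$ at $x_2$ in the $\TIS$ case; hence near such an endpoint $z\Phi^{(1)}(z)=c+a\,(z-x_i)^{1/2}+O(z-x_i)$ with $a\neq0$, so $z\Phi$ is unramified there. Together with the stated absence of ramification at $z=0$ and $z=\infty$, the critical points of $z\Phi$ are exactly the zeros of $\frac{d}{dz}\big(z\Phi^{(j)}(z)\big)$. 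Since $z\Phi$ has degree $2$, it presents $\mathcal R$ as a branched double cover of $\mathbb C\cup\{\infty\}$, so Riemann--Hurwitz gives exactly $2g+2$ ramification points, all simple: $2$ in the $\BIS$ and $\FIS$ cases ($g=0$) and $4$ in the $\TIS$ case ($g=1$). It therefore suffices to produce that many real critical points on $\mathcal R^{(1)}$.

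Next I would read off the shape of $g(z):=z\Phi^{(1)}(z)$ along the real intervals of $\mathcal R^{(1)}$ from Lemma~\ref{lemma41}: $g\to1$ at $\pm\infty$; $g$ has a simple pole at $-q^{-1}$ with $g(z)\to\mp\infty$ as $z\to(-q^{-1})^{\mp}$ (the residue $1-t$ of $\Phi^{(1)}$ is positive and $-q^{-1}<0$); and $g(x_0)=g(0)=0$ with $-q^{-1}<x_0<0$. Since $\Phi^{(1)}(z)<0$ for real $z$ just left of $0$ (as in the proof of Lemma~\ref{lemma41}(c)), $g>0$ on a left neighbourhood of $0$, and as $g$ vanishes at both ends of $[x_0,0]$ this already forces a local maximum $y_1\in(x_0,0)$ --- case~(a). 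In the $\TIS$ and $\FIS$ cases the identity $z\Phi^{(1)}(z)=1+t\int\frac{s\,d\mu_1(s)}{z-s}-\frac{(1-t)q^{-1}}{z+q^{-1}}$ together with the tail $d\mu_1/dx\sim c_\infty x^{-(r+2)/(r+1)}$ of Theorem~\ref{theorem23}(e) gives $g(z)-1\sim -\,t\,C\,|z|^{-1/(r+1)}<0$ as $z\to-\infty$, so $g$ is decreasing for very negative $z$; combined with $g\to+\infty$ at $(-q^{-1})^{-}$ this produces a local minimum $y_0\in(-\infty,-q^{-1})$, supplying the extra point in cases (c) and (d). Finally, positivity of the square-root coefficients of the $\mu_1$-density at $x_1$ (and $x_2$) gives, for $z$ in the complementary interval, $g(z)=v_1-x_1 t\pi c_1(z-x_1)^{1/2}+\cdots$ near $x_1$ (so $g$ leaves $x_1$ plunging downward from $v_1:=g(x_1)$, with right-derivative $-\infty$) and, in the $\TIS$ case, $g(z)=v_2+x_2 t\pi c_2(x_2-z)^{1/2}+\cdots$ near $x_2$ (so $g$ approaches $v_2:=g(x_2)$ from above, with left-derivative $-\infty$).

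To place the remaining critical point(s) I would use the identity $\Phi^{(1)}+\Phi^{(2)}=tF_2+\frac{1-t}{z+q^{-1}}$ (from \eqref{eq:Phij} and \eqref{eq:F0}), which gives
\[ z\Phi^{(1)}(z)+z\Phi^{(2)}(z)=t\,zF_2(z)+\frac{(1-t)z}{z+q^{-1}}. \]
Because $\mu_2$ is supported on $(-\infty,0]$, both terms on the right are strictly increasing on $(0,\infty)$. At $z=x_1$ (and at $z=x_2$ in the $\TIS$ case) the sheets $\mathcal R^{(1)}$ and $\mathcal R^{(2)}$ are joined and $\Phi$ is finite, so $z\Phi^{(1)}(x_1)=z\Phi^{(2)}(x_1)$ (and likewise at $x_2$); comparing with the limit at $+\infty$ yields $v_1<1$ in the $\BIS$ case, and comparing the values at $x_1$ and $x_2$ yields $v_1<v_2$ in the $\TIS$ case. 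In the $\BIS$ case $g$ plunges below $v_1<1$ at $x_1$ and must climb back to $1$ at $+\infty$, so it has a local minimum $y_2\in(x_1,\infty)$ --- case~(b). In the $\TIS$ case $g$ plunges below $v_1$ at $x_1$, must then rise (it has to reach $v_2>v_1$), must overshoot $v_2$ (it meets $x_2$ from above) and come back down; since the one-sided derivatives at $x_1$ and $x_2$ are both $-\infty$, this forces a local minimum $y_2$ followed by a local maximum $y_3$ in $(x_1,x_2)$ --- case~(c).

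In every case one has then exhibited exactly as many real critical points on $\mathcal R^{(1)}$ as the Riemann--Hurwitz count permits, so these are all of them: there are no further critical points on the first sheet (in particular none on $(-\infty,-q^{-1})$ in the $\BIS$ case), none on the other sheets, and none off the real axis, and each is simple. I expect the main obstacle to be the comparison $v_1<1$ (resp.\ $v_1<v_2$) in the third step: without it one cannot see that the remaining critical point(s) lie in $(x_1,\infty)$ (resp.\ in the gap $(x_1,x_2)$), whereas the rest is routine bookkeeping with Stieltjes transforms and square-root edge expansions.
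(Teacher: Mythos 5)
Your proof is correct and follows essentially the same route as the paper: the Riemann--Hurwitz count, the square-root vanishing at $x_1,x_2$, the tail of $\mu_1$ at $+\infty$ to locate $y_0\in(-\infty,-q^{-1})$ in the $\TIS$/$\FIS$ cases, and a monotonicity argument on $(x_1,\infty)$ (resp.\ $(x_1,x_2)$) to force the local minimum $y_2$ (resp.\ $y_2<y_3$). The one small variation is that you obtain $v_1<1$ (in $\BIS$) and $v_1<v_2$ (in $\TIS$) from the strict monotonicity of $z\Phi^{(1)}(z)+z\Phi^{(2)}(z)=tzF_2(z)+\tfrac{(1-t)z}{z+q^{-1}}$ on $(0,\infty)$, whereas the paper derives $0<x_1\Phi^{(1)}(x_1)<1$ from the degree-two value-distribution argument in \eqref{eq:zPhiin01} and the ordering $v_1<v_2$ from the strict monotonicity of $z\Phi^{(2)}$ alone.
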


\begin{figure}[t]
	\centering
	
	\begin{overpic}[scale=0.4]{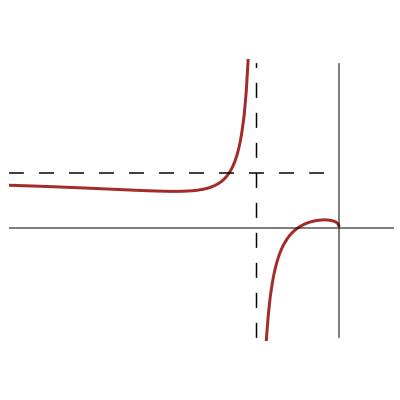}
		\put(40,35){$y_0$}
		\put(52,35){$-q^{-1}$}
		\put(69,50){$x_0$}
		\put(76,35){$y_1$}
		\put(82,54){$1$}
		\put(43,45){$\vdots$}
		\put(72,43){$\vdots$}
		\put(79,39){$\vdots$}
		\put(0,75){$z\Phi^{(1)}$ in $\TIS$ case}
	\end{overpic}
	\begin{overpic}[scale=0.4]{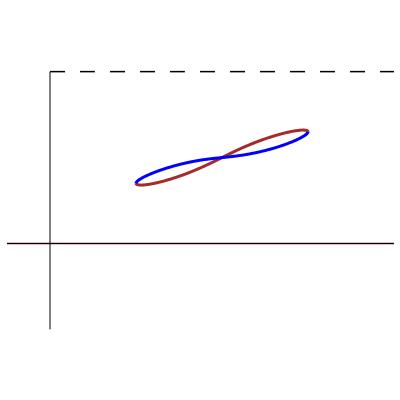}
		\put(30,30){$x_1$}
		\put(37,30){$y_2$}
		\put(8,78){$1$}
		\put(34,40){$\vdots$}
		\put(34,47){$\vdots$}
		\put(37,39){$\vdots$}
		\put(37,46){$\vdots$}
		
		\put(76,30){$x_2$}
		\put(69,30){$y_3$}
		\put(76,40){$\vdots$}
		\put(76,47){$\vdots$}
		\put(76,54){$\vdots$}
		\put(76,61){$\vdots$}
		\put(72,39){$\vdots$}
		\put(72,46){$\vdots$}
		\put(72,53){$\vdots$}
		\put(72,60){$\vdots$}
		\put(18,70){$z\Phi^{(1)}$ and $z\Phi^{(2)}$ in $\TIS$ case}
	\end{overpic}
	\caption{Sketches of the graph of $z\Phi$ in $\TIS$ case. The left panel shows the graph of $z \Phi^{(1)}(z)$ on the negative real line where it has a pole at $-q^{-1}$,
	zeros at $x_0$ and $0$, a  local minimum at $y_0$, and a local maximum at $y_1$.
		The right panel shows the graphs of $z \Phi^{(1)}(z)$
		(in brown)
		and $z \Phi^{(2)}$ (in blue) on the interval $[x_1,x_2]$. 
		The graph of $z \Phi^{(1)}(z)$ has a local minimum
		at $y_2$ and a local maximum at $y_3$, while the graph of $z \Phi^{(2)}(z)$
		is strictly increasing. 
		The graph of $z\Phi^{(r+1)}(z)$ is as in the right panel of Figure \ref{figure5} below. \label{figure4}}
\end{figure}

\begin{figure}[t]
	\centering	
	\begin{overpic}[scale=0.4]{TISgraph1}
		\put(40,35){$y_0$}
		\put(54,35){$-q^{-1}$}
		\put(69,50){$x_0$}
		\put(76,35){$y_1$}
		\put(82,54){$1$}
		\put(43,45){$\vdots$}
		\put(72,43){$\vdots$}
		\put(79,39){$\vdots$}
		\put(43,45){$\vdots$}
		\put(0,75){$z\Phi^{(1)}$ in $\FIS$ case}
	\end{overpic}
	\begin{overpic}[scale=0.4]{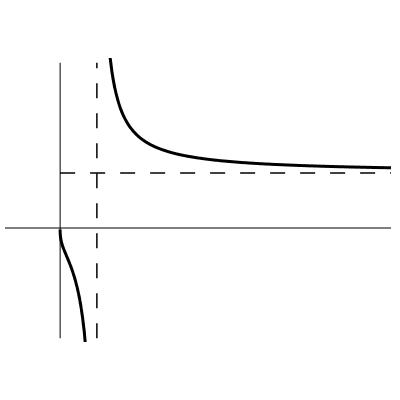}
	\put(25,36){$q$}
	\put(10,54){$1$}
	\put(40,80){$z\Phi^{(r+1)}$ in all cases}
	\put(50,70){(when $r$ is even)}
	\end{overpic}
	\caption{Sketches of the graph of $z\Phi$ in $\FIS$ case. The left panel shows the graph of $z \Phi^{(1)}(z)$ on 
	$(-\infty,0]$ where it has a pole at $-q^{-1}$,
	 zeros at $x_0$ and $0$, a  local minimum at $y_0$, and a local maximum at $y_1$.
	The right panel shows the graph of $z \Phi^{(r+1)}(z)$ on $(-1)^r [0,\infty)$ which has a pole at $(-1)^r q$. The figure is for $r=2$ and it has the
	same features for all cases.
	\label{figure5}}
\end{figure}

\begin{proof}
	(a) By Lemma \ref{lemma41} (c) 
	$z \Phi^{(1)}(z)$ has zeros at $z=x_0$ and at $z=0$, and
	in between it is real and positive. So there is a local
	maximum, and the point $y_1 \in (x_0, 0)$ where it is attained is a  critical point in all cases.
	
	\medskip
	
	The proofs of parts (b)-(d) rely on an inspection 
	of the graph of $z\Phi$ on the  real part of the
	Riemann surface (that is, on the part where both
	$z$ and $\Phi$ are real), see Figures \ref{figure3},
	\ref{figure4}, and \ref{figure5} for sketches
	of the graphs in the various cases.  
	We infer the following about $z\Phi$ from the 
	behavior at the poles and at infinity,
	\begin{itemize} 
	\item  every value in $(-\infty,0)$ is attained 
	once in $(-q^{-1}, x_0)$ on the first sheet 
	and once between $0$  and $(-1)^r q$ on the last sheet,
	\item every value in $(1,\infty)$ is attained
	once in $(-\infty, -q^{-1})$ on the first sheet
	and once between $(-1)^r q$ and infinity
	on the last sheet. 
	\end{itemize}
	Since $z\Phi$ has degree two, the values
	in $(-\infty,0)$ and $(1,\infty)$ are attained nowhere
	else on the Riemann surface.
	In particular
	\begin{equation} \label{eq:zPhiin01}
	\begin{array}{l}
	0 < z \Phi^{(1)}(z) < 1, \text{ and } \\[5pt]
	0 <  z \Phi^{(2)}(z) < 1, \end{array} \quad
	\begin{cases} \text{ for $z \in [x_1,\infty)$} & 
	\text{in $\BIS$ case}, \\
	\text{ for $z \in [x_1,x_2]$} & 
	\text{in $\TIS$ case},
	\end{cases}
	\end{equation} 
	see the right panels of Figures \ref{figure3} and \ref{figure4}.
	
	From the fact that the density of $\mu_1$ vanishes 
	as a square root at the endpoints $x_1, x_2$ 
	see Theorem \ref{theorem23} (d), it follows
	that
	\begin{equation} \label{eq:F1prime} 
		F_1'(z) = - \int \frac{1}{(z-x)^2} d\mu_1(x)
		\to - \infty	
	\end{equation}
	as  $z \to x_1+$ or $z \to x_2-$.
	Hence by \eqref{eq:Phij} we also have
	\begin{equation} \label{eq:zPhi1prime}
	\left(z \Phi^{(1)}(z)\right)' \to - \infty \quad \text{ and } 
	\quad 	\left(z \Phi^{(2)}(z)\right)' \to + \infty 
	\end{equation}
	as $z \to x_1+$ (in $\BIS$ and $\TIS$ cases) 
	or $z \to x_2-$ (in $\TIS$ case). 
	
	\medskip
	(b) In $\BIS$ case we noted in \eqref{eq:zPhiin01} 
	that $z \Phi^{(1)}(z)$ 
	takes the value $x_1 \Phi^{(1)}(x_1) \in (0,1)$  
	at $x_1$, and by \eqref{eq:zPhi1prime} it 
	starts to decrease if
	$z \in (x_1,\infty)$ increases. Since it tends to 
	the value $1$ at infinity,
	there will be a local minimum, say at $y_2 \in (x_1,\infty)$. This is a critical point, and
	part (b) follows.  
	
	It also follows that $z \Phi^{(2)}(z)$ strictly increases
	for $z \in [x_1,\infty)$ since in $\BIS$ case there
	are no further critical points.
	
	\medskip
	
	(c) In $\TIS$ case we first observe that
	\begin{equation}
		\frac{d}{dz} \left( z F_1(z) \right)	
			= - \int \frac{x}{(z-x)^2} d\mu_1(x) < 0,
				\qquad z \in (x_1,x_2), 
	\end{equation}
	\begin{equation}
	\frac{d}{dz} \left( z F_2(z) \right)	
	= - \int \frac{x}{(z-x)^2} d\mu_2(x) > 0,
	\qquad z \in (x_1,x_2). 
	\end{equation}	
	The difference in sign is due to the fact
	that $\mu_1$ is supported on $[0,\infty)$,
	while $\mu_2$ is supported on $(-\infty,0]$.
	Thus by \eqref{eq:Phij}
	\[ \left(z \Phi^{(2)}(z)\right)'
		 > 0 \quad \text{ for } z \in (x_1,x_2), \]
	and therefore $z \Phi^{(2)}(z)$ strictly increases on
	$(x_1,x_2)$ and there are no critical points in 
	$(x_1,x_2)$ on the second sheet. We also conclude
	\[ x_1 \Phi^{(1)}(x_1) <  x_2 \Phi^{(1)}(x_2) \]
	but $z \Phi^{(1)}(z)$ will not be monotonic on
	$[x_1,x_2]$ due to \eqref{eq:zPhi1prime}. Instead
	it will start to decrease at $x_1$ to a local
	minimum, say at $y_2$, and then increases to a local
	maximum, say at $y_3$, and then again decreases.
	This gives us the critical points $y_2 < y_3$ in
	$(x_1, x_2)$. We already know $y_1 \in (-x_0, 0)$.
	
	The final critical point is in $(-\infty, -q^{-1})$,
	and this follows from the observation that
	\[ z F_1(z) = z \int \frac{d\mu_1(x)}{z-x}
		= 1 + \int \frac{x d\mu_1(x)}{z-x} \]
	with $\int \frac{x d\mu_1(x)}{z-x} < 0$ for $z \in (-\infty,0]$ and 
	\begin{equation} \label{eq:F1atinf} 
	\int \frac{x d\mu_1(x)}{z-x} =
	 -C z^{- \frac{1}{r+1}}( 1 + O(z^{-\frac{1}{r+1}}) 
	 	\text{ as } z \to -\infty, \end{equation}
	with a positive constant $C > 0$.
	Also 
	$z F_0(z) = (1-t^{-1}) + O(z^{-1})$ as $z \to \infty$,
	so that by 	\eqref{eq:Phij}.
	\begin{equation} \label{eq:zPhi1atinf} 
	z \Phi^{(1)}(z) = 1 + 
	\int \frac{x d\mu_1(x)}{z-x}  + O(z^{-1}) 
	\end{equation}
	as $z \to -\infty$, where the second term is
	negative for $z < 0$ and it dominates the $O(z^{-1})$
	term as $z \to -\infty$. Therefore 
	$z \Phi^{(1)}(z)$ decreases on an interval
	$(-\infty, y_0)$ for some $y_0 \in (-\infty, -q^{-1})$, 
	it reaches a local minimum	at $y_0$ and then increases
	to $+\infty$ as $z \to -q^{-1}-$. See also Figure \ref{figure4}.
	
	\medskip
			
	(d) In $\FIS$ case the expansions \eqref{eq:F1atinf}
	and \eqref{eq:zPhi1atinf} remain valid, as does the
	conclusion that $z \Phi^{(1)}(z)$ has a local
	minimum at some $y_0 \in (-\infty, -q^{-1})$, 
	and $y_0$ is a critical point.  
\end{proof}

After these preparations we  turn to the proof
of Theorem \ref{theorem28}.

\subsection{Proof of part (a)} \label{subsec42}

\begin{proof}
For $x \in \supp(\mu_1)$, we have by \eqref{eq:Phij}
and the Stieltjes inversion formula
\begin{align}  x \Im \Phi^{(1)}_{\pm}(x) 
&	=  xt \Im \left(F_1\right)_{\pm}(x)  = \mp \frac{xt}{\pi} \frac{d\mu_1(x)}{dx}.
\label{eq:xPhi1onsupp} 
\end{align}
Here the subscript $\pm$ denotes the limiting value from the upper
($+$) or lower ($-$) half plane.
Then by \eqref{eq:defU} we find 
$\supp(\mu_1) \subset U$.
 
from the Cauchy-Riemann equations and the definition
\eqref{eq:defU} of $U$, we obtain that the parts
of the real line where $z \Phi^{(1)}(z)$ is
real and decreasing belong to $U$, while those
parts where $z \Phi^{(1)}(z)$ is real and increasing
do not belong to $U$. Then in view of the behavior
of $z \Phi^{(1)}(z)$ on the realline that we see in Figures \ref{figure3},
\ref{figure4}, \ref{figure5}, and the fact
that $\supp(\mu_1) \subset U$, we conclude that
\begin{equation}
U  \cap (\mathbb R \cup \{\infty\}) 
= \begin{cases}
[y_1,y_2] & \text{in $\BIS$ case}, \\
[-\infty,y_0] \cup [y_1,y_2] \cup [y_3,\infty]
& \text{in $\TIS$ case}, \\
[-\infty,y_0] \cup [y_1,\infty] &
\text{in $\FIS$ case}.
\end{cases}
\end{equation}
In particular $-q^{-1} \not\in U$. This proves part (a).
\end{proof}

\subsection{Proof of part (b)} \label{subsec43}
\begin{proof}
Since the $y_j$'s are critical points, we have 
that $z \Phi^{(1)}(z)$ is also
real on certain contours that emanate from 
each $y_j$ into the complex plane.
These contours are going to be the boundary $\partial U$
of $U$.

The labelling of the critical points in Lemma \ref{lemma43}
is such that $y_j$ is a local minimum of $z \Phi^{(1)}(z)$ 
if $j$ is even,
and a local maximum if $j$ is odd, when we restrict
to the real line. It means that  $z \Phi^{(1)}(z)$
is real and increasing on $\partial U$
when we move away from $y_j$ with $j$ odd,
and decreasing from $y_j$ with $j$ even. 

Noting that  
\begin{align} \label{eq:critvalBIS} 
 y_1 \Phi^{(1)}(y_1) <  y_2 \Phi^{(1)}(y_2) & \quad
\text{in $\BIS$ case}, 
\end{align} 
we conclude that the part of $\partial U$ that
emanates from $y_1$ will end at $y_2$ in $\BIS$
case.  Since $\Sigma_1 =
\supp(\mu_1) \subset U$, we also see that 
$\partial U$ consists of a simple closed contour
surrounding $\Sigma_1$
and $U$ is a bounded simply connected domain
in the $\BIS$ case.
\end{proof}

\subsection{Proof of part (c)} \label{subsec44}

\begin{proof}
In $\TIS$ case, we have four critical points
and instead of \eqref{eq:critvalBIS} we have
\begin{align}
\label{eq:critvalTIS} 
 y_1 \Phi^{(1)}(y_1) <  y_2 \Phi^{(1)}(y_2)
< y_3 \Phi^{(1)}(y_3) < y_0 \Phi^{(1)}(y_0) & \quad
\text{in $\TIS$ case}. 
\end{align}
Then $\partial U$ consists
of two closed contours, one containing $y_1$ and $y_2$,
and one containing $y_0$ and $y_3$. Both closed 
contours go around $\Sigma_1$. It follows that 
$U$ has two  components, namely the  bounded domain
that is enclosed by the inner component of $\partial U$,
and the  unbounded domain that is outside of
the outer component of $\partial U$.
This proves part (c). \end{proof}

\subsection{Proof of part (d)} \label{subsec45}

\begin{proof}
In $\FIS$ case we have two critical points $y_0 < y_1 < 0$
with
\begin{align}
\label{eq:critvalFIS} 
y_1 \Phi^{(1)}(y_1) <  y_0 \Phi^{(1)}(y_0), & \quad
\text{in $\FIS$ case}.
\end{align}
Then $\partial U$  is a closed contour 
containing $y_0$ and $y_1$,
and $U$ is the domain that is exterior to this contour.
\end{proof}

\subsection{Proof of part (e)} \label{subsec43}

\begin{proof}
We know from Theorem \ref{theorem23} (b) 
that $t \mapsto t \mu_{j,t}$ increases with $t$
for every $j$, and $t \mu_{j,t}$ has total mass
$t+ j-1$. Then
\begin{equation} \label{eq:rhoj} 
	\rho_j = \rho_{j,t} = \frac{\partial (t\mu_{j,t})}{\partial t}, \quad  j=0, \ldots, r+1
\end{equation} 
is a probability measure on
$\supp(\mu_j)$ for every $j$. In particular $\rho_0 = \delta_{-q^{-1}}$ by \eqref{eq:massmuj}. 

Thus by differentiating \eqref{eq:Phij} for $j=1$
with respect to $t$,
\begin{align*} \frac{\partial \Phi^{(1)}(z)}{\partial t}  
	& =  \int \frac{d\rho_1(x)}{z-x}  - \frac{1}{z+q^{-1}}.
\end{align*}
Since $\rho_0$ and $\rho_1$ are both probability measures
we obtain from this
	\begin{align} \label{eq:zPhi1partialt} 
	\frac{\partial (z \Phi^{(1)}(z))}{\partial t}	& = \int \frac{x d\rho_1(x)}{z-x} + \frac{q^{-1}}{z+q^{-1}}.
	\end{align}

For $\Im z > 0$ both terms in the right-hand side of  \eqref{eq:zPhi1partialt} 
	have negative imaginary parts (since
	$x d\rho_1(x)$ is a positive measure), 
	while for $\Im z < 0$
	the two terms have positive imaginary parts.
	In other words
	\[ \frac{\partial}{\partial t} 
	\Im \left(z \Phi^{(1)}(z)\right) 
	\begin{cases} < 0 \qquad \text{ for } \Im z >  0, \\
	> 0 \qquad \text{ for } \Im z < 0, 
	\end{cases} \]
	Therefore the part in the upper half plane where
	$\Im \left(z \Phi^{(1)}(z)\right) < 0$ increases with $t$. Similarly,
	the part in the lower half plane where
	$\Im \left(z \Phi^{(1)}(z)\right) > 0$ increass with $t$,
	which proves part (e) in view of the definition
	\eqref{eq:defU} of $U$.
	\end{proof}

\subsection{Proof of part (f)} \label{subsec44}

\begin{proof}
It is clear from \eqref{eq:defU} that 
$z \Phi^{(1)}(z)$ is real-valued for $z \in \partial U$.
The point of part (f) is that $\partial U$
is characterized by \eqref{eq:partialU}.

Being a meromorphic function on a compact Riemann surface,
$\Phi$ satisfies the algebraic equation
\begin{equation} \label{eq:thm24fproof1} 
 \prod_{j=1}^{r+1} \left(\Phi - \Phi^{(j)}(z) \right) =	\Phi^{r+1}  + \sum_{k=1}^{r+1} (-1)^k e_k(z) \Phi^{r+1-k}
= 0 \end{equation}	
where $e_k(z)$ is the $k$th elementary symmetric function
in $\Phi^{(1)}, \ldots, \Phi^{(r+1)}$, i.e.,
\begin{equation}  \label{eq:thm24fproof2}
	e_k(z) = \sum_{1 \leq j_1 < \cdots < j_k \leq r+1} \prod_{l=1}^k \Phi^{(j_l)}(z). 
\end{equation}
Each $e_k$ is a rational function of $z \in \mathbb C$ 
with real coefficients and simple poles at $-q^{-1}$ and at $(-1)^rq$,
due to the simple poles of $\Phi^{(1)}$ and $\Phi^{(r+1)}$
at these respective values, see \eqref{eq:residue1} and
\eqref{eq:residue2}. 
Since $z \Phi^{(j)}(z) \to 1$ as $z \to \infty$
for every $j$ by Lemma \ref{lemma41} (a), and there are $\binom{r+1}{k}$ terms
in \eqref{eq:thm24fproof2} we have
\[ z^k e_k(z) \to \binom{r+1}{k} \quad \text{ as } z \to \infty. \]
By Lemma \ref{lemma41} (c) $z=0$ is a simple zero 
of $z \Phi$ on the Riemann surface, and from  \eqref{eq:thm24fproof2} we get that 
$z^k e_k(z)$ becomes zero for $z=0$.
Thus	
\[ z^k e_k(z) =  
	\binom{r+1}{k}  \frac{z(z+A_k)}{(z+q^{-1})(z-(-1)^r q)}  \]
for some real value $A_k$.

Using this in \eqref{eq:thm24fproof1} and clearing denominators by multiplying with $z^r(z+q^{-1})(z-(-1)^rq)$ we obtain
\begin{multline} \label{eq:thm24fproof3} 
		z^r \left(z+q^{-1}\right)\left(z-(-1)^r q\right) \Phi^{r+1} \\
	+\sum_{k=1}^{r+1} (-1)^k \binom{r+1}{k} (z+A_k) 
	\left(z \Phi \right)^{r+1-k} = 0.
	\end{multline}
We separate terms that are polynomial in $z \Phi$
to  rewrite \eqref{eq:thm24fproof3} as 	(with $A_0 = q^{-1}-(-1)^r q$)
\begin{multline} \label{eq:thm24fproof4}
	\sum_{k=0}^{r+1} (-1)^k \binom{r+1}{k} A_k (z \Phi)^{r+1-k} \\
	 = (-1)^r z^{-1} \left(z \Phi\right)^{r+1}
	- z \sum_{k=0}^{r+1} (-1)^k \binom{r+1}{k}
	\left(z \Phi\right)^{r+1-k} \\ 
	= (-1)^r z^{-1} \left(z \Phi\right)^{r+1} + z \left(z\Phi-1\right)^{r+1}. \end{multline}
In the last step we used the binomial theorem.
	
Let $z \in \partial U$. 	 
Then $z \Phi^{(1)}(z)$ is real and it satisfies
the equation \eqref{eq:thm24fproof4}, which means that the left-hand side is real since each $A_k$ is real.
Thus the right-hand side is real as well, and 
taking  imaginary parts we obtain since $z \Phi^{(1)}(z)$ is real,
	\[ 0 = (-1)^{r+1} \frac{\Im z}{|z|^2} \left(z \Phi^{(1)}(z) \right)^{r+1}
		+ \Im z \left(z \Phi^{(1)}(z) - 1 \right)^{r+1},
		\quad \text{ for } z \in \partial U. \]
This equation leads to \eqref{eq:partialU} whenever $\Im z \neq 0$.
Thus \eqref{eq:partialU} holds for
$z \in \partial U \setminus \mathbb R$ and by
continuity it also holds for $z \in \partial U
\cap  \mathbb R$. \end{proof}

\section{Proof of Theorem \ref{theorem211}} \label{sec5}

\subsection{Proof of part (a)} \label{subsec51}

\begin{proof}
Since $\mu^*$ is the symmetric pullback of the probability
measure $\mu_1$, it is also a probability measure.
That $\mu_{\Omega}$ is a probability measure as
well can be seen from the formulas  in Lemma \ref{lemma52}
below, by letting $z \to \infty$ in either
\eqref{eq:lemma52formula1} (in case $\Omega$ is bounded),
or \eqref{eq:lemma52formula2} (in case $\Omega$ is unbounded). 
\end{proof}

\subsection{Proof of part (b)} \label{subsec52}

\begin{proof}
Since $t \mu_{1,t}$ is increasing by Theorem \ref{theorem23} (b), also $t \mu^*_t$ increases with $t$.

The domains $U_t$ increase with $t$ by Theorem \ref{theorem28}.
Then also $\Omega_t$ increases with $t$ 
and then $t \mu_{\Omega,t}$ also increases with
$t$, since by \eqref{eq:muOmega} this is just the
spherical area measure $\frac{1}{\pi} \frac{dA(z)}{(1+|z|^2)^2}$ restricted to $\Omega_t$.  
\end{proof}

\subsection{Stieltjes transform of $\mu_{\Omega}$} \label{subsec53}

The proofs of parts (c) and (d) are modelled after the proofs
in the paper \cite{CrKu19+} that deals with the case $r=1$. 
See in particular the proof of Proposition 4.1 
in \cite{CrKu19+}. 
As a preparation we need the following formula for the spherical
Schwarz function from \eqref{eq:defS}, which is the analogue of \cite[(5.1)]{CrKu19+}.

\begin{lemma} \label{lemma51}
We have
\begin{align} S(z) & = \frac{(1-t) z^r}{z^{r+1} + q^{-1}}
+ t \int \frac{d\mu^*(x)}{z-x} \label{eq:Szformula} 
\end{align}
\end{lemma}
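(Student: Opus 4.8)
The plan is to identify $S(z)$ directly from the definitions in Definition \ref{def29} together with the formula \eqref{eq:residue1} for $\Phi^{(1)}$, and then rewrite the resulting expression in terms of $\mu^*$ rather than $\mu_1$. Recall $S(z) = z^r \Phi^{(1)}(z^{r+1})$ and, from \eqref{eq:residue1},
\[
\Phi^{(1)}(w) = t \int \frac{d\mu_1(s)}{w - s} + \frac{1-t}{w + q^{-1}}.
\]
Substituting $w = z^{r+1}$ and multiplying by $z^r$ gives immediately
\[
S(z) = \frac{(1-t) z^r}{z^{r+1} + q^{-1}} + t z^r \int \frac{d\mu_1(s)}{z^{r+1} - s},
\]
so the only thing to check is that $z^r \int \frac{d\mu_1(s)}{z^{r+1}-s} = \int \frac{d\mu^*(x)}{z - x}$.

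The second step is this identity, which is a change-of-variables computation using the defining property of $\mu^*$ from Definition \ref{def210}: $\mu^*$ is the rotationally invariant measure on the $r+1$-star whose pushforward under $z \mapsto z^{r+1}$ is $\mu_1$. Writing $s = x^{r+1}$ for $x$ ranging over $[0,\infty)$ and using rotational invariance, one has for any test function $g$ that $\int g \, d\mu^* = \frac{1}{r+1}\sum_{k=0}^{r} \int g(\omega^k x)\, d\tilde\mu_1(x)$ where $\omega = e^{2\pi i/(r+1)}$ and $\tilde\mu_1$ is $\mu_1$ viewed on $[0,\infty)$ via $x \mapsto x^{r+1}$. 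Applying this with $g(x) = \frac{1}{z-x}$ and using the partial-fraction identity $\sum_{k=0}^{r} \frac{1}{z - \omega^k x} = \frac{(r+1) z^r}{z^{r+1} - x^{r+1}}$ yields
\[
\int \frac{d\mu^*(x)}{z - x} = \frac{1}{r+1} \sum_{k=0}^{r} \int \frac{d\tilde\mu_1(x)}{z - \omega^k x} = \int \frac{z^r \, d\tilde\mu_1(x)}{z^{r+1} - x^{r+1}} = z^r \int \frac{d\mu_1(s)}{z^{r+1} - s},
\]
which is exactly what is needed. Combining with the first step gives \eqref{eq:Szformula}.

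I expect no serious obstacle here; the lemma is essentially bookkeeping. The one point requiring a little care is the precise relationship between $\mu^*$ and $\mu_1$ — one must be careful that the pushforward convention in Definition \ref{def210} is being used in the right direction, and that rotational invariance is invoked correctly so that the $r+1$ branches of the $(r+1)$-st root are all accounted for. It is also worth noting for completeness that the integral $\int \frac{d\mu^*(x)}{z-x}$ is well-defined for $z^{r+1} \notin \operatorname{supp}(\mu_1)$, i.e. exactly where $S$ is defined away from its poles, consistent with the domain of $\Phi^{(1)}$ used in the substitution.
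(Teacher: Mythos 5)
Your proof is correct and follows essentially the same route as the paper's: both start from $S(z) = z^r \Phi^{(1)}(z^{r+1})$, substitute the expression \eqref{eq:residue1} for $\Phi^{(1)}$, and reduce the statement to the identity $\int \frac{d\mu^*(x)}{z-x} = z^r F_1(z^{r+1})$, which is relation \eqref{eq:lemma51proof1} in the paper. The only (minor) difference is in how that identity is established: the paper first records the relation between logarithmic potentials, $\int \log(z-x)\,d\mu^*(x) = \frac{1}{r+1}\int \log(z^{r+1}-x)\,d\mu_1(x)$, and then differentiates in $z$, whereas you compute the Stieltjes transform directly by decomposing $\mu^*$ as the symmetrized pullback $\frac{1}{r+1}\sum_{k=0}^{r}(R_k)_*\tilde\mu_1$ and invoking the partial-fraction identity $\sum_{k=0}^{r}\frac{1}{z-\omega^k x} = \frac{(r+1)z^r}{z^{r+1}-x^{r+1}}$; the two computations are equivalent, and yours is arguably the more explicit of the two.
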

\begin{proof}
Since $\mu^*$ is the symmetric pullback of $\mu_1$
under the mapping $z \mapsto z^{r+1}$, we can easily
verify that their logarithmic potentials are related
via 
\[ U^{\mu^*}(z) = \frac{1}{r+1} U^{\mu_1}(z^{r+1}). \]
and also, with appropriate branches of the logarithm,
\[ \int \log(z-x) d\mu^*(x) =
	\frac{1}{r+1} \int \log\left(z^{r+1}-x\right) d\mu_1(x). \]
Taking the $z$-derivative we find
\begin{equation} \label{eq:lemma51proof1} 
	\int \frac{d\mu^*(x)}{z-x} =  z^r F_1(z^{r+1}). 
	\end{equation}
Then combining \eqref{eq:defS}, \eqref{eq:residue1}, and
\eqref{eq:lemma51proof1}, we find  \eqref{eq:Szformula}.
\end{proof}
	
The following lemma is the analogue of \cite[Lemma 5.2]{CrKu19+}
and its proof is also very similar.
\begin{lemma} \label{lemma52}
	The Stieltjes transform of $\mu_{\Omega}$ satisfies
	\begin{align} \label{eq:lemma52formula1}
	\int \frac{d\mu_\Omega(x)}{z-x}
		& = \int \frac{d\mu^*(x)}{z-x},
			\qquad z \in \mathbb C \setminus \Omega, \\
  	\int \frac{d\mu_\Omega(x)}{z-x}	&  = -\frac{(1-t)z^r}{t(z^{r+1} + q^{-1})} 
			+ \frac{\bar{z}}{t(1+|z|^2)},
			\quad z \in \Omega. \label{eq:lemma52formula2}
	\end{align}	
\end{lemma}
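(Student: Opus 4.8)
The plan is to adapt the $\bar\partial$-argument from the proof of \cite[Lemma 5.2]{CrKu19+} for the case $r=1$. Write $C^\nu(z)=\int\frac{d\nu(x)}{z-x}$ for the Cauchy transform of a finite measure $\nu$, and let $G$ be the function on $\mathbb C\cup\{\infty\}$ defined as the right-hand side of \eqref{eq:lemma52formula1} on $\mathbb C\setminus\Omega$ and as the right-hand side of \eqref{eq:lemma52formula2} on $\Omega$; the lemma asserts that $C^{\mu_\Omega}=G$. The measure $\mu_\Omega$ has bounded density and finite total mass ($\leq 1/t$), so $C^{\mu_\Omega}$ is continuous on $\mathbb C$, tends to $0$ at infinity, and, since $\bar\partial_z\frac1z=\pi\delta_0$,
\[ \bar\partial_z C^{\mu_\Omega}(z)=\frac{1}{t(1+|z|^2)^2}\ \text{for $z$ in the interior of $\Omega$,}\qquad \bar\partial_z C^{\mu_\Omega}(z)=0\ \text{for }z\notin\overline\Omega. \]
It therefore suffices to show that $G-C^{\mu_\Omega}$ is entire and vanishes at infinity, and then invoke Liouville.

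First I would compute $\bar\partial G$ away from $\partial\Omega$. On $\mathbb C\setminus\overline\Omega$ we have $G=C^{\mu^*}$, which is holomorphic there because $\supp(\mu^*)=\{z\mid z^{r+1}\in\Sigma_1\}$ sits in the interior of $\Omega$ (from \eqref{eq:xPhi1onsupp} the positivity of the density of $\mu_1$ on $\Sigma_1$ places points near $\Sigma_1$ off the real axis in $U$). On the interior of $\Omega$ the term $\frac{z^r}{z^{r+1}+q^{-1}}$ is holomorphic, since its poles, the roots of $z^{r+1}=-q^{-1}$, lie outside $\overline\Omega$ by Theorem \ref{theorem28}(a) (which gives $-q^{-1}\notin U$, hence by \eqref{eq:defOmega} these roots avoid $\Omega$, and in fact a neighbourhood of $\partial\Omega$); and a one-line computation gives $\bar\partial_z\frac{\bar z}{1+|z|^2}=\frac{1}{(1+|z|^2)^2}$. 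Hence $\bar\partial_z G=\frac{1}{t(1+|z|^2)^2}$ on the interior of $\Omega$, and comparing with the display above, $G-C^{\mu_\Omega}$ is holomorphic on $\mathbb C\setminus\partial\Omega$.

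The key step is continuity of $G$ across $\partial\Omega$, which is where the spherical Schwarz property is used. For $z\in\partial\Omega$, combining \eqref{eq:sphSchwarz}, $S(z)=\frac{\bar z}{1+|z|^2}$, with the formula \eqref{eq:Szformula} of Lemma \ref{lemma51} yields
\[ t\,C^{\mu^*}(z)=\frac{\bar z}{1+|z|^2}-\frac{(1-t)z^r}{z^{r+1}+q^{-1}},\qquad z\in\partial\Omega, \]
i.e.\ the limit of $G$ from inside $\Omega$ agrees on $\partial\Omega$ with the value $C^{\mu^*}$ from outside. Both one-sided limits are restrictions of functions holomorphic in a neighbourhood of $\partial\Omega$ on the relevant side, so $G$ extends continuously across $\partial\Omega$ and $G-C^{\mu_\Omega}\in C(\mathbb C)$. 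Since $\partial\Omega$ is the $z\mapsto z^{r+1}$ preimage of $\partial U$, which is a finite union of analytic arcs (the locus $\{z\Phi^{(1)}(z)\in\mathbb R\}$ emanating from the critical points $y_j$ of the degree-two meromorphic function $z\Phi$ described in Section \ref{sec4}), a continuous function on $\mathbb C$ holomorphic off such a set is entire; hence $G-C^{\mu_\Omega}$ is entire.

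Finally $G(z)\to 0$ at infinity: if $\infty\notin\Omega$ (the $\BIS$ and $\TIS$ cases) because $G=C^{\mu^*}=O(1/z)$ there, and if $\infty\in\Omega$ (the $\FIS$ case) because $-\frac{(1-t)z^r}{t(z^{r+1}+q^{-1})}\sim-\frac{1-t}{tz}$ and $\frac{\bar z}{t(1+|z|^2)}\sim\frac1{tz}$. Thus $G-C^{\mu_\Omega}$ is entire with limit $0$ at infinity, so by Liouville it vanishes identically, which is exactly \eqref{eq:lemma52formula1}--\eqref{eq:lemma52formula2}. The part requiring the most care is the gluing across $\partial\Omega$: one must verify that $\partial\Omega$ avoids both the poles $z^{r+1}=-q^{-1}$ and $\supp(\mu^*)$, so the two one-sided boundary values of $G$ are genuine holomorphic boundary values, and one needs $\partial\Omega$ to be a finite union of analytic arcs for the removability step — all of which follows from the structural results of Section \ref{sec4}, just as in \cite{CrKu19+}.
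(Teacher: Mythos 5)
Your argument is correct, but it takes a genuinely different route from the paper's proof, so a comparison is worth recording. The paper computes the Stieltjes transform of $\mu_{\Omega}$ head-on: it converts $t\int \frac{d\mu_\Omega(s)}{z-s}$ into a boundary integral via the complex Green's formula, then uses the identity $S(s) = \bar s/(1+|s|^2)$ on $\partial\Omega$ to replace the non-holomorphic integrand $\bar s/(1+|s|^2)$ by the meromorphic function $S(s)$, and evaluates the resulting contour integral over $\partial(\mathbb C\setminus\Omega)$ by residues (separately for $z$ outside and inside $\Omega$, in the latter case excising a small disk to isolate the pole at $s=z$; unbounded $\Omega$ is handled by a cut-off at $|s|=R$). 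You instead \emph{guess} the candidate function $G$, show that $G - C^{\mu_\Omega}$ has vanishing $\bar\partial$-derivative off $\partial\Omega$, establish that $G$ is continuous across $\partial\Omega$ by combining Lemma \ref{lemma51} with \eqref{eq:sphSchwarz}, and then conclude via removability of analytic arcs (Morera) and Liouville. Both proofs hinge on the same structural inputs — the Schwarz property and the location of the poles of $S$ and of $\supp(\mu^*)$ relative to $\Omega$ — but they deploy them differently: in the paper the Schwarz property makes the residue theorem applicable, while in your argument it is precisely what glues the two branches of $G$ together. Your approach is slightly more economical in that it avoids the small-disk and large-$R$ bookkeeping of the Green's formula computation, at the cost of having to invoke the removability theorem for analytic arcs (which in turn requires the structural information from Section \ref{sec4} that $\partial U$ is a finite union of real-analytic arcs and that $0\in\Sigma_1$ lies in the interior of $U$, so that $\partial\Omega$ is free of branch points of $z\mapsto z^{r+1}$); the paper's direct computation is self-contained in that regard. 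The one place you should be a bit more explicit is the removability step: continuity of $G-C^{\mu_\Omega}$ plus holomorphy off a set of planar measure zero alone is not enough (one needs rectifiability of $\partial\Omega$, e.g.\ via Morera), which you correctly flag but gloss over — worth a sentence citing the analyticity of $\partial U$ established in Lemma \ref{lemma43} and the proof of Theorem \ref{theorem28}.
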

\begin{proof} 
Take $z \in \mathbb C \setminus \Omega$ first.
Then by \eqref{eq:dmuOmega}, \eqref{eq:muOmega}, the complex Green's formula,
and the property \eqref{eq:sphSchwarz} of the
spherical Schwarz function, we find if $\Omega$ is bounded,
\begin{align} \nonumber
	t \int \frac{d\mu_{\Omega}(s)}{z-s} & =
	\frac{1}{\pi} \int_{\Omega} \frac{dA(s)}{(z-s) (1+|s|^2)^2} \\
	& = \frac{1}{2\pi i} \oint_{\partial \Omega}
		\frac{\bar{s}}{(z-s) (1+|s|^2)} ds \nonumber \\
	& = \frac{1}{2\pi i} \oint_{\partial \Omega}
		\frac{S(s)}{z-s} ds \nonumber \\
	& = - \frac{1}{2\pi i} \oint_{\partial (\mathbb C \setminus \Omega)}
	\frac{S(s)}{z-s} ds.  \label{eq:lemma52proof1} 
		\end{align}
Since the complex Green's formula applies to bounded
domains, one has to modify the calculation in case
$\Omega$ is unbounded. Then one first makes a cut-off to $\{ z \in \Omega \mid |z| \leq R\}$ with  a large $R > 0$.
The Green's formula then produces an additional integral 
over $|z|=R$, which however
tends to zero as $R \to \infty$, due to the fact that $\frac{S(s)}{z-s} = O(s^{-2})$ as $s \to \infty$. Thus \eqref{eq:lemma52proof1}
also holds in the unbounded case.

The remaining integral in \eqref{eq:lemma52proof1} 
is evaluated with the residue
theorem for $\mathbb C \setminus \Omega$.
The spherical Schwarz function $S$ has  $r+1$ simple  poles at the solutions of $s^{r+1} = -q^{-1}$, the
poles are all in $\mathbb C \setminus \Omega$, and from \eqref{eq:Szformula}
it can be checked that $S$ has the same 
residue $\frac{1-t}{r+1}$ at each of the poles. Together 
they give the contribution
\begin{equation} \label{eq:lemma52proof2} 
	-\frac{1-t}{r+1} \sum_{s^{r+1} = - q^{-1}}
	\frac{1}{z-s} = - \frac{(1-t)z^r}{z^{r+1}+q^{-1}}
	\end{equation}
to the integral \eqref{eq:lemma52proof1}.
There is an additional pole in \eqref{eq:lemma52proof1}
at $s = z$ with the contribution $S(z)$. Finally, note
that there is no contribution from infinity in case 
$\mathbb C \setminus \Omega$ is unbounded, since the integrand in
\eqref{eq:lemma52proof1} is $O(s^{-2})$ as $s \to \infty$.
 In total we get 
\[ t \int_{\Omega} \frac{d\mu_{\Omega}(s)}{z-s}
	= -  \frac{(1-t) z^r}{z^{r+1}+q^{-1}} + S(z),
	\qquad z \in \mathbb C \setminus \Omega,
 \]
and \eqref{eq:lemma52formula2} follows because of \eqref{eq:Szformula}.

\medskip
Let $z \in \Omega \setminus \partial \Omega$. Take 
$\varepsilon >0$ such that the disk $D(z,\varepsilon)$
of radius $\varepsilon$ around $z$ is contained in
$\Omega$.
Then by a calculation similar to \eqref{eq:lemma52proof1}, 
with complex Green's theorem and the spherical Schwarz function
\begin{align} \nonumber 
t  \int_{\Omega \setminus D(z,\varepsilon)}
	\frac{d\mu_{\Omega}(s)}{z-s}
		& = -\frac{1}{2\pi i} \oint_{\partial (\mathbb C\setminus \Omega)} 	
			\frac{S(s)}{z-s} ds \\ 
		& \qquad  - \frac{1}{2\pi i} \oint_{\partial D(z,\varepsilon)}
	\frac{ \bar{s}}{(z-s) (1+|s|^2)} ds. 
	\label{eq:lemma52proof3} \end{align}
The integral over $\partial (\mathbb C \setminus  \Omega)$ is again evaluated using the residue theorem, but in the present
situation there is no contribution from $s=z$, but only the combined contribution
\eqref{eq:lemma52proof2} from the poles of $S$. 
The  integral over the circle $\partial D(z,\varepsilon)$ (including the prefactor $- \frac{1}{2\pi i}$) tends
to $\frac{\bar{z}}{1+|z|^2}$ as $\varepsilon \to 0+$.
Thus letting $\varepsilon \to 0+$ in \eqref{eq:lemma52proof3} we obtain
\eqref{eq:lemma52formula2}.
\end{proof}

\subsection{The log integral of $\mu_{\Omega}$} \label{subsec54}

We will need the following result in the $\TIS$ case 
for the proof of Lemma \ref{lemma56} below.

\begin{lemma} \label{lemma53}
	In $\FIS$ and $\TIS$ cases we have
	\begin{equation} \label{eq:Umuat0}
	U^{\mu_{\Omega}}(0) + \frac{1-t}{(r+1)t} \log q
	= 0. \end{equation}
\end{lemma}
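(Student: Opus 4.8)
The plan is to reduce \eqref{eq:Umuat0} to showing that a single explicit function vanishes at the origin, and then to treat the two cases separately. Put
\[ P(z) = U^{\mu_{\Omega}}(z) + \frac{1}{2t}\log(1+|z|^2) - \frac{1-t}{(r+1)t}\log|z^{r+1}+q^{-1}| , \]
so that, since $\log(1+|0|^2)=0$, the claim \eqref{eq:Umuat0} is exactly $P(0)=0$. Using the second formula of Lemma \ref{lemma52} and $\partial_z U^{\mu_{\Omega}}(z) = -\tfrac12\int(z-s)^{-1}\,d\mu_{\Omega}(s)$ one computes $\partial_z P\equiv 0$ on $\Omega$; as $P$ is real this makes $P$ locally constant on $\Omega$. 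Since $0<q<1$ we have $0\in\Sigma_1\subset U$ by Theorem \ref{theorem23}(c) and Theorem \ref{theorem28}(a), hence $0\in\Omega$, and $P(0)$ is the value of $P$ on the component of $\Omega$ containing $0$.

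In the $\FIS$ case $\Omega$ is connected and, since $\mathbb C\setminus U$ is bounded by Theorem \ref{theorem28}(d), $\Omega$ contains a neighbourhood of $\infty$. Thus $P$ is constant on all of $\Omega$, and we identify that constant by letting $z\to\infty$: because $\mu_{\Omega}$ is a probability measure with finite logarithmic moment, $U^{\mu_{\Omega}}(z)=-\log|z|+o(1)$, while $\tfrac1{2t}\log(1+|z|^2)-\tfrac{1-t}{(r+1)t}\log|z^{r+1}+q^{-1}| = \big(\tfrac1t-\tfrac{1-t}{t}\big)\log|z|+o(1)=\log|z|+o(1)$. Hence $P(z)\to 0$, so $P\equiv 0$ on $\Omega$ and in particular $P(0)=0$.

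In the $\TIS$ case $0$ lies in the bounded component $\Omega_b$ of $\Omega$ (Theorem \ref{theorem28}(c)), while the argument above, applied on the unbounded component $\Omega_u$, gives $P\equiv 0$ there; on $\Omega_b$ we only know $P\equiv\gamma$ for some constant $\gamma$, and it remains to prove $\gamma=0$. Since $P$ is continuous on $\mathbb C$ and the gap $\mathcal G:=\mathbb C\setminus\Omega$ is the connected doubly connected domain of Theorem \ref{theorem28}(c), we may write $\gamma=P(z_b)-P(z_u)$ for any $z_b\in\partial\Omega_b$, $z_u\in\partial\Omega_u$ and evaluate the difference by a path integral through $\mathcal G$. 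Combining the first formula of Lemma \ref{lemma52}, Lemma \ref{lemma51} and \eqref{eq:defS} one gets on $\mathcal G$
\[ \partial_z P(z) = \frac{1}{2t}\Big(\frac{\bar z}{1+|z|^2}-S(z)\Big) . \]
Choosing $z_b=y_2^{1/(r+1)}$ and $z_u=y_3^{1/(r+1)}$, where $y_2<y_3$ are the critical points of $z\Phi^{(1)}$ lying in $(x_1,x_2)$ (Lemma \ref{lemma43}(c)), and integrating along the real segment joining them — where $\bar z=z$ and $S(z)=z^r\Phi^{(1)}(z^{r+1})$ is real — using $\Re(\bar z\,dz)=\tfrac12\,d|z|^2$ and $\int z^r\Phi^{(1)}(z^{r+1})\,dz=\tfrac1{r+1}\int\Phi^{(1)}(w)\,dw$, we obtain
\[ \gamma = \frac{1}{t}\Big[\tfrac12\log\tfrac{1+y_2^{2/(r+1)}}{1+y_3^{2/(r+1)}} + \tfrac1{r+1}\int_{y_2}^{y_3}\Phi^{(1)}(w)\,dw\Big] . \]

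The crux — and the step I expect to be the main obstacle — is therefore the identity
\[ \int_{y_2}^{y_3}\Phi^{(1)}(w)\,dw = \frac{r+1}{2}\log\frac{1+y_3^{2/(r+1)}}{1+y_2^{2/(r+1)}} , \qquad\text{equivalently}\qquad \int_{y_2}^{y_3}\frac1w\Big[w\Phi^{(1)}(w)-\frac{w^{2/(r+1)}}{1+w^{2/(r+1)}}\Big]\,dw = 0 . \]
Since $y_2,y_3\in\partial U$, the bracket vanishes at both endpoints by \eqref{eq:partialU}, so the content is that its weighted area cancels. I would prove this by deforming $[y_2,y_3]$ to a cycle on the genus-one Riemann surface $\mathcal R$ and invoking the algebraic equation \eqref{eq:thm24fproof4}, whose real coefficients force all branch and period ambiguities to be purely imaginary (the residues of $S$ at the solutions of $z^{r+1}=-q^{-1}$ being the real number $\tfrac{1-t}{r+1}$ by Lemma \ref{lemma51}), leaving a residue computation that yields the right-hand side. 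A possible alternative is a continuity argument from the $\FIS$ case as $t\uparrow t_{2,cr}$, provided $\gamma$ is known to depend real-analytically on $t$ across $(t_{1,cr},t_{2,cr})$.
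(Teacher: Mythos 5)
Your outer strategy is genuinely different from the paper's: the paper computes $t\int\log(x)\,d\mu_\Omega(x)$ directly via the complex Green's formula with a branch cut along $[0,\infty)$ and a residue computation over $\mathbb C\setminus\Omega$; you instead observe that $P$ is locally constant on $\Omega$, evaluate it on the unbounded component by asymptotics at $\infty$, and transport the value to the bounded component by integrating $\partial_z P$ through the gap $\mathcal G$. That reduction is clean and correct, and the integral identity you arrive at, namely $\int_{y_2^*}^{y_3^*}\bigl(S(x)-\tfrac{x}{1+x^2}\bigr)\,dx=0$, is precisely the content of the paper's Lemma~\ref{lemma54} combined with the observation that $\int_{x_1}^{x_2}(\Phi^{(1)}-\Phi^{(2)})\,dx=0$ by the variational condition $2U^{\mu_1}=U^{\mu_0}+U^{\mu_2}$ on $\supp\mu_1$.

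However, you do not prove that crux identity, and the two routes you sketch for it will not work as stated. Your first suggestion (``deform $[y_2,y_3]$ to a cycle on $\mathcal R$ and invoke \eqref{eq:thm24fproof4}, whose real coefficients force period ambiguities to be purely imaginary'') misses the actual mechanism: real coefficients do not make an $a$-period of $\Phi\,dz$ vanish. The vanishing of $\oint_a\Phi\,dz=\int_{x_1}^{x_2}(\Phi^{(1)}-\Phi^{(2)})\,dx$ comes from the variational equality $2U^{\mu_1}=U^{\mu_0}+U^{\mu_2}$ at the two band endpoints $x_1,x_2$, and is not a consequence of realness. The paper then has to pass to the hyperelliptic model $\eta^2=\prod_{j}(\zeta-\zeta_j)$, $\zeta=z\Phi$, and introduce the modified differential $2\omega+(r+1)\tfrac{d\zeta}{\zeta-1}$, whose anti-symmetry under the hyperelliptic involution turns the $a$-period into twice a single-sheet integral, finally evaluated using \eqref{eq:partialU} at the branch points $\zeta_2,\zeta_3$. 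None of these ingredients appears in your sketch, and a plain ``residue computation'' will not by itself produce the endpoint logarithms $\log(1+(y_j^*)^2)$. Your second suggestion (continuity from the $\FIS$ boundary) only gives $\gamma\to 0$ as $t\uparrow t_{2,cr}$, which is weaker than $\gamma\equiv 0$ on $(t_{1,cr},t_{2,cr})$; to upgrade it you would need to establish real-analyticity of $\gamma$ in $t$ across the whole $\TIS$ window, which is not addressed. So the $\TIS$ case of the lemma remains unproven in your proposal.
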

In $\BIS$ case we have 
\[ U^{\mu_{\Omega}}(0) + \frac{1-t}{(r+1)t} \log q
	= \frac{1}{2t(r+1)} \int_{x_1}^{\infty} 
	\left(\Phi^{(1)}(x) - \Phi^{(2)}(x) \right) dx \leq 0 
\]
but we will not prove this  as we do not need it for the
proof of parts (c) and (d) of Theorem \ref{theorem211}.

\begin{proof}[Proof of Lemma \ref{lemma53}.]
In the proof we use $\log x = \log |x| + i \arg x$ 
with $0 < \arg x < 2\pi$, and we are
going to show that 
	
\begin{equation} \label{eq:logmuOmega} 
	t\int \log(x) d\mu_{\Omega}(x) = 
	\frac{1-t}{r+1} \log q + t \pi i \quad  
	\text{ in $\FIS$ and $\TIS$ cases}
\end{equation}
and then \eqref{eq:Umuat0} will follow by taking
the real parts on both sides.
The evaluation of \eqref{eq:logmuOmega} follows 
along the lines of the proof of Lemma \ref{lemma52}
but there is a non-trivial extra step 
required in the $\TIS$ case. 

\medskip
We start with the $\FIS$ case. 
Consider the cut-off domain 
\[ \Omega_{R,\delta} =
	\{ z \in \Omega \mid |z| \leq R,
		\dist(z, [0,\infty)) > \delta \}		\] 
with large $R > 0$ and small $\delta > 0$.
Due to our definition of the logarithm with the 
branch cut along $[0,\infty)$ we can apply the
complex Green's theorem to the integral over
$\Omega_{R,\delta}$ and we find in $\FIS$ case
\begin{multline} \label{eq:lemma53proof1}
	\frac{1}{\pi} \int_{\Omega_{R,\delta}} 
	\log x \frac{dA(x)}{(1+|x|^2)^2}
	 = \frac{1}{2\pi i} \oint_{\partial \Omega_{R,\delta}}
	 \log s \frac{\overline{s}}{1+|s|^2} ds \\
	 \to \frac{1}{2\pi i} \oint_{\Omega} 
	 	\log s \frac{\overline{s}}{1+|s|^2} ds 
		+ \frac{1}{2\pi i} \oint_{|s| = R} 
		 \log s \frac{\overline{s}}{1+|s|^2} ds \\
	 		- \int_0^R \frac{x}{1+x^2} dx 
	\end{multline}	
as $\delta \to 0+$. The last term in \eqref{eq:lemma53proof1} is the combined 
contribution of the upper and lower sides of
the branch cut of the logarithm. 
It yields
\begin{equation} \label{eq:lemma53proof2} 
\int_{0}^R \frac{x}{1+x^2} dx = \frac{1}{2}
\log \left(1+R^2\right) = \log R + o(1) 
\quad \text{ as } R  \to \infty. \end{equation}

The second integral in the right-hand side of \eqref{eq:lemma53proof1} is evaluated with  parametrization $s = R e^{i \theta}$, $0 < \theta < 2 \pi$, to give
\begin{align} \nonumber
	\frac{1}{2\pi i} \oint_{|s| = R}
	\log(s) \frac{\overline{s}}{1+|s|^2} ds
	 &	= \frac{R^2 \log R}{1+R^2}
			+ \frac{R^2}{1+R^2} \pi i \\
	\label{eq:lemma53proof3}
	&	= \log R + \pi i  + o(1) \quad \text{ as } R \to \infty. \end{align}
In the first term we use \eqref{eq:sphSchwarz}
and then evaluate the integral by a residue calculation
over $\mathbb C \setminus \Omega$. The complement
of $\Omega$ consists of $r+1$ disjoint disks 
in $\FIS$ case, see Figure \ref{figureOmega}, 
and $S(s)$ is meromorphic with one simple pole
at the solution of $s^{r+1} + q^{-1}$ in each of the disks
with residue $\frac{1-t}{r+1}$. Therefore 
\begin{align}  \nonumber
	\frac{1}{2\pi i} \oint_{\Omega} \log s
	\frac{\overline{s}}{1+|s|^2} ds & = 
	-\frac{1}{2\pi i} \oint_{\partial (C \setminus \Omega)}
	(\log s) S(s) ds \\ \nonumber
	& = -\frac{1-t}{r+1} 
		\sum_{s: s^{r+1} = -q^{-1}} \log s \\
	\label{eq:lemma53proof4} & = 
		\frac{1-t}{r+1} \log q - (1-t) \pi i   
		\end{align}
		
Letting $R \to \infty$ in \eqref{eq:lemma53proof1} 
we find from \eqref{eq:muOmega}, \eqref{eq:lemma53proof2}, \eqref{eq:lemma53proof3} and \eqref{eq:lemma53proof4} that
\begin{align*} t \int \log(x) d\mu_{\Omega}(x)
	& = \lim_{R \to \infty} \lim_{\delta \to 0+}
		\frac{1}{\pi} \int_{\Omega_{R,\delta}} 	
		\log x \frac{dA(x)}{(1+|x|^2)^2} \\
	& = \frac{1-t}{r+1} \log q + t \pi i \end{align*}
as claimed in \eqref{eq:logmuOmega} in $\FIS$ case.

\medskip

In $\TIS$ case we have to adjust the above calculation in
two ways. First, since $\Omega \cap [0,\infty) = [0,y_2^*]
\cup [y_3^*,\infty)$, with $y_j^* = y_j^{1/(r+1)}$, the integral over $[0,R]$ in \eqref{eq:lemma53proof1} is replaced by 
\[ \left(\int_0^{y_2^*} + \int_{y_3^*}^R \right) \frac{x}{1+x^2} dx = - \int_{y_2^*}^{y_3^*} \frac{x}{1+x^2} dx 
	+ \log R + o(1) \quad \text{ as } R \to \infty.
	\]
Second, in the evaluation \eqref{eq:lemma53proof4} of
the integral over $\Omega$, there is a contribution 
from the intersection $[y_2^*,y_3^*]$ 
of $\mathbb C \setminus \Omega$ with the positive real line, due to the discontinuity of the logarithm.
Instead of \eqref{eq:lemma53proof4} we get
\[ \frac{1}{2\pi i} \oint_{\Omega}  \log s 
	\frac{\overline{s}}{1+|s|^2} ds
  = \frac{1-t}{r+1} \log q - (1-t) \pi i	
  	-  \int_{y_2^*}^{y_3^*} S(x) dx, \]
and the result is the formula
\[ t \int \log(x) d\mu_{\Omega}(x) = 
\frac{1-t}{r+1} \log q + t\pi i
	- \int_{y_2^*}^{y_3^*} \left(S(x) - \frac{x}{1+x^2}
	\right) dx \]
for the $\TIS$ case.
 
To obtain \eqref{eq:logmuOmega} it remains to prove that
the integral in the right-hand side vanishes,
and we do this by showing the identity 
\eqref{eq:lem54TIS} in Lemma \ref{lemma54} below.
The right-hand side of \eqref{eq:lem54TIS} is 
zero and to see this we recall  \eqref{eq:Phij} 
from which we get
\begin{multline*}
		\Phi^{(1)}(x) -  \Phi^{(2)}(x)  = 
		t(2 F_1(x) - F_0(x) - F_2(x)) \\
	    = -t \frac{d}{dx} \left(2 U^{\mu_1}(x) -  
		U^{\mu_0}(x) - U^{\mu_2}(x) \right),
		\quad \text{for } x_1 < x < x_2. 
		\end{multline*}
We also recall that  $2 U^{\mu_1} - U^{\mu_0} - U^{\mu_2}$ 
vanishes on the support of $\mu_1$, and hence
in particular at both $x_1$ and $x_2$.
Then the right-hand side is indeed $0$ by the
fundamental theorem of calculus.

Lemma \ref{lemma53} is thus proved, pending the proof of
the remarkable identity 
\eqref{eq:lem54TIS}. Since the proof of this identity
uses new ideas that were not in \cite{CrKu19+},
we decided to give it in a separate lemma.
\end{proof}

\begin{lemma} \label{lemma54}
	In the $\TIS$ case we have
\begin{align} 
\int_{y_2^*}^{y_3^*} \left(S(x) - \frac{x}{1+x^2} \right) dx 
= \frac{1}{2(r+1)} \int_{x_1}^{x_2}
\left( \Phi^{(1)}(x) - \Phi^{(2)}(x) \right) dx, 
\label{eq:lem54TIS} 
\end{align}
where $y_j^* = y_j^{\frac{1}{r+1}}$ for $j=2,3$.
\end{lemma}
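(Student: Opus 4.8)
The plan is to identify both sides of \eqref{eq:lem54TIS}, after clearing the common factor $\frac{1}{2(r+1)}$, with one and the same period integral $\oint_\gamma \Phi\,dz$ of the meromorphic differential $\Phi\,dz$ on the Riemann surface $\mathcal R$, taken over a cycle $\gamma$ assembled from the real axis.

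First I would record an identity for $\Phi\,dz$. By Lemma \ref{lemma41} the function $P:=z\Phi$ has degree two, so $P\colon\mathcal R\to\mathbb C\cup\{\infty\}$ is a two-sheeted branched cover; let $\tau$ be the sheet-exchanging involution and $z_{\pm}(P)$ the two values of $z$ over a generic $P$. Multiplying \eqref{eq:thm24fproof4} by $z$ exhibits $z$ as a root of a quadratic in $z$ over the base with leading coefficient $(P-1)^{r+1}$ and constant term $(-1)^r P^{r+1}$, so $z_+(P)\,z_-(P)=(-1)^r P^{r+1}/(P-1)^{r+1}$. Since $\Phi\,dz=P\,d\log z$ on $\mathcal R$, differentiating $\log\bigl(z_+z_-\bigr)=\mathrm{const}+(r+1)\log\frac{P}{P-1}$ in $P$ shows that the $\tau$-invariant differential $\Phi\,dz+\tau^*(\Phi\,dz)$, written in the coordinate $P$, equals
\[ P\Bigl(\frac{dz_+}{z_+}+\frac{dz_-}{z_-}\Bigr)=(r+1)\Bigl(1-\frac{P}{P-1}\Bigr)dP=-\frac{r+1}{P-1}\,dP . \]

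Next I would take $\gamma$ to be the cycle on $\mathcal R$ running along the real axis from $x_1$ to $x_2$ on the first sheet, crossing to the second sheet at the branch point $x_2$, returning from $x_2$ to $x_1$ on the second sheet, and crossing back at $x_1$; here I use that in the $\TIS$ case $x_1,x_2\in\supp(\mu_1)$ are branch points joining $\mathcal R^{(1)}$ and $\mathcal R^{(2)}$, that $\Phi^{(1)},\Phi^{(2)}$ are real on $(x_1,x_2)$, and that $\Phi\,dz$, whose only poles are at $-q^{-1}$, $(-1)^r q$ and $\infty$, is holomorphic along $\gamma$. On the one hand, directly,
\[ \oint_\gamma \Phi\,dz=\int_{x_1}^{x_2}\bigl(\Phi^{(1)}(w)-\Phi^{(2)}(w)\bigr)\,dw , \]
which is $2(r+1)$ times the right-hand side of \eqref{eq:lem54TIS}. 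On the other hand, by Lemma \ref{lemma43} and Figure \ref{figure4}, on $[x_1,x_2]$ the function $z\Phi^{(1)}$ decreases on $[x_1,y_2]$, increases on $[y_2,y_3]$, decreases on $[y_3,x_2]$, while $z\Phi^{(2)}$ is strictly increasing; moreover the values $P_j:=y_j\Phi^{(1)}(y_j)$ and $x_k\Phi^{(1)}(x_k)$ of $z\Phi$ satisfy $P_2<x_1\Phi^{(1)}(x_1)<x_2\Phi^{(1)}(x_2)<P_3$ (by \eqref{eq:critvalTIS} and \eqref{eq:zPhiin01}). From this one deduces that $P$ maps $\gamma$ onto $[P_2,P_3]\subset(0,1)$, covering it exactly twice: once with positive orientation along the sub-arc $A$ on which $z$ runs from $y_2$ to $y_3$ on the first sheet, and once with negative orientation along the remainder of $\gamma$, which is hence $\tau(A)$. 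Combining with the differential identity above,
\[ \oint_\gamma \Phi\,dz=\int_A\bigl(\Phi\,dz-\tau^*(\Phi\,dz)\bigr)=2\int_{y_2}^{y_3}\Phi^{(1)}(w)\,dw+(r+1)\int_{P_2}^{P_3}\frac{dP}{P-1} . \]

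Finally I would evaluate the remaining pieces. The substitution $w=x^{r+1}$ in \eqref{eq:defS} gives $\int_{y_2^*}^{y_3^*}S(x)\,dx=\frac{1}{r+1}\int_{y_2}^{y_3}\Phi^{(1)}(w)\,dw$ and $\int_{y_2^*}^{y_3^*}\frac{x}{1+x^2}\,dx=\frac{1}{2}\log\frac{1+y_3^{2/(r+1)}}{1+y_2^{2/(r+1)}}$; and since $y_2,y_3\in\partial U$, Theorem \ref{theorem28}(f) (equation \eqref{eq:partialU}) gives $1-P_j=1-y_j\Phi^{(1)}(y_j)=\bigl(1+y_j^{2/(r+1)}\bigr)^{-1}$, whence $\int_{P_2}^{P_3}\frac{dP}{P-1}=\log\frac{1-P_3}{1-P_2}=-\log\frac{1+y_3^{2/(r+1)}}{1+y_2^{2/(r+1)}}$. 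Plugging these into the last display shows that $\oint_\gamma\Phi\,dz$ is also $2(r+1)$ times the left-hand side of \eqref{eq:lem54TIS}, and comparing the two evaluations of $\oint_\gamma\Phi\,dz$ yields the lemma. I expect the main obstacle to be the bookkeeping in the penultimate step: verifying that $\gamma$ double-covers $[P_2,P_3]$ with the stated orientations — equivalently, that $\tau$ carries the first-sheet arc $\{z\in[y_2,y_3]\}$ exactly onto the complement of that arc in $\gamma$ — which requires tracking, fibre by fibre of $P$, which point lies on which sub-arc of $\gamma$, relying on the monotonicity analysis in the proof of Lemma \ref{lemma43}. Everything else is the algebraic identity for $\Phi\,dz$ and elementary changes of variables.
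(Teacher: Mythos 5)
Your proof is correct and follows essentially the same route as the paper's: both evaluate the period $\oint_a\Phi\,dz$ over the $a$-cycle using the hyperelliptic involution $\tau$ of the degree-two function $P=z\Phi$, and then convert the result via the substitution $z=x^{r+1}$ and the $\partial U$ relation \eqref{eq:partialU}. The only difference is in how the identity $\Phi\,dz + \tau^*(\Phi\,dz) = -\tfrac{r+1}{P-1}\,dP$ is established: you read it off directly from the product of roots $z_+(P)\,z_-(P)$ obtained from \eqref{eq:thm24fproof4}, whereas the paper deduces the anti-symmetry of $2\Phi\,dz + (r+1)\tfrac{d\zeta}{\zeta-1}$ from a residue count together with the absence of $\tau$-invariant holomorphic differentials on the genus-one curve.
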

\begin{proof}
Consider $\omega = \Phi dz$ as a meromorphic differential on the Riemann surface. Then 
 \begin{equation} \label{eq:lem54proof2} 
	 	\oint_{a} \omega = 
	 	\int_{x_1}^{x_2} \left(\Phi^{(1)}(x) - \Phi^{(2)}(x) \right) dx \end{equation}
	 for the cycle $a$ that goes from $x_1$ to
 	$x_2$ on the first sheet, and back from $x_2$ to $x_1$ on the second sheet, cf.~Figure \ref{figure2}. The meromorphic differential
 	has simple poles at $-q^{-1}$ on first sheet,
 	at $(-1)^r q$ on last sheet, and at $\infty$
 	with respective residues $1-t$, $r+t$, and $-1-r$.
 	
 	Since $z \Phi$ is a degree two meromorphic function we can represent the Riemann
 	surface $\mathcal R$ by the equations
 	\begin{equation} \label{eq:lem54proof3} 
 		\eta^2 =  \prod_{j=0}^3 (\zeta - \zeta_j),
 		\qquad \zeta = z \Phi \end{equation}
 	with $\zeta_j = 
 	y_j \Phi^{(1)}(y_j)$, for $j=0,1,2,3$, 
 	being the four branch points
 	of $\zeta$ with $y_0 < y_1 < y_2 < y_3$ by Lemma \ref{lemma43} and 
 	\[ 0 < \zeta_1 < \zeta_2 < \zeta_3 < \zeta_0 < 1, \]
 	see also Figure \ref{figure4}. In the new coordinates $\mathcal R$ is a two sheeted cover of the $\zeta$-plane, with
 	branch cuts $[\zeta_1, \zeta_2]$ and $[\zeta_3,\zeta_0]$. We label the sheets so
 	that $z= -q^{-1}$ corresponds to 
 	$\zeta = \infty$ on the first sheet and $z= (-1)^r q$ to $\zeta = \infty$ on the second sheet.
 	Then $\eta$ is positive for real 
 	$\zeta > \zeta_0$ on the first sheet.
 	The point $z=\infty$ corresponds to the point 
 	$\zeta = 1$ on the second sheet.
 	The
 	$a$-cycle goes from $\zeta_2$ to $\zeta_3$ on
 	the first sheet and back from $\zeta_3$ to $\zeta_2$
 	on the second sheet.
 	
 	The meromorphic differential $\omega$ has 
 	simple poles at the two points at $\zeta= \infty$ 
 	with residues $1-t$ and $r+t$, and at
 	$\zeta = 1$ on the second sheet
 	with residue $-1-r$.  	Then 
 	\begin{equation} \label{eq:lem54proof4}
 	 2\omega + (r+1) \frac{d\zeta}{\zeta-1} \end{equation}
 	has residues $\pm (1-2t -r)$  	at the two
 	points at infinity, and residues 
 	$\pm (r+1)$ at the two points with $\zeta = 1$. Thus 
 	\eqref{eq:lem54proof4} has an 
 	anti-symmetry with respect to the 
 	involution $(\zeta, \eta) \mapsto
 	(\zeta, -\eta)$ of $\mathcal R$.  It follows that
 	\[ 2 \omega + (r+1) \frac{d\zeta}{\zeta-1}
 		= \frac{A \zeta^2 + B \zeta + C}{\zeta-1} 
 		\frac{d\zeta}{\eta} \] 
 	for certain constants $A$, $B$, and $C$. From this
 	form we conclude that 
 	\[ \oint_{a} 
 	\left(2 \omega + (r+1) \frac{d\zeta}{\zeta-1} \right) d\zeta
 	= 2 \int_{\zeta_2}^{\zeta_3} 
 		\left( 2 \omega + (r+1) \frac{d\zeta}{\zeta-1} \right)
 		d\zeta \]
 	with integration on the first sheet.
 	Clearly
 	$\oint_a \frac{d\zeta}{\zeta-1} = 0$ and therefore
	\begin{align} \nonumber
	\oint_a \omega  & =   
	\int_{\zeta_2}^{\zeta_3} \left( 2 \omega 
	+ (r+1) \frac{d\zeta}{\zeta-1}  \right) \\
	\label{eq:lem54proof5}
	& = 2 \int_{y_2}^{y_3} \Phi^{(1)}(z) dz +
	(r+1) \left( \log(1-\zeta_3) - \log(1-\zeta_2) \right)
\end{align}
	since $[\zeta_2,\zeta_3]$
 	on the first sheet corresponds to $[y_2,y_3]$ on the first sheet in the original $z$-variable where 
 	$\omega = \Phi^{(1)}(z) dz$.
 	
 	Changing variable $z = x^{r+1}$ and using \eqref{eq:defS} we have
 	\begin{align} \label{eq:lem54proof6}
 	\int_{y_2}^{y_3} \Phi^{(1)}(z) dz = (r+1) \int_{y_2^*}^{y_3^*} S(x) dx
 	\end{align}
 	since $y_j^* = y_j^{\frac{1}{r+1}}$. For the last
 	term on the right of \eqref{eq:lem54proof5}
 	we recall that $\zeta_j = y_j \Phi^{(1)}(y_j)$ and $y_j$ belongs to $\partial U$.
    Therefore  it satisfies the equation \eqref{eq:partialU}, that is,
    \[ \zeta_j = \frac{y_j^{\frac{2}{r+1}}}{1 +  y_j^{\frac{2}{r+1}}} = \frac{(y_j^*)^2}{1+ (y_j^*)^2}, \qquad \text{for } j = 2,3, \]
    which we rewrite as   
    \begin{align} \label{eq:lem54proof7}
        \log\left(1- \zeta_j\right) =	
     - \log\left(1+  (y_j^*)^2\right),
     \qquad \text{for } j =2,3.
     \end{align}
 	Hence 
 	\begin{align} \nonumber
 		\log \left(1-\zeta_2\right) - \log\left(1-\zeta_3\right)
 		& = \log\left(1+(y_3^*)^2\right) - \log\left(1+(y_2^*)^2\right) \\
 		& = 2 \int_{y_2^*}^{y_3^*} \frac{x}{1+x^2} dx.
 		\label{eq:lem54proof8}
 		\end{align}
 	Combining \eqref{eq:lem54proof2},  \eqref{eq:lem54proof5}, \eqref{eq:lem54proof6}, \eqref{eq:lem54proof8}
 	we obtain the  equality of the two
 	integrals in \eqref{eq:lem54TIS}.
\end{proof}

\subsection{Measures $\nu_t$ and $\rho_t$} \label{subsec55}

For the proofs of  parts (c) and (d), we also need to consider
the dynamical picture where we vary $t$, see
\cite[section 6]{CrKu19+}. To emphasize
the $t$-dependence we attach a subscript $t$ to the
notions that vary with $t$.

We already observed in part (b) that $t\mu^*_t$ and $t \mu_{\Omega,t}$ increase with $t$. The derivatives 
\begin{equation} \label{eq:rhotnut} 
	\rho_{t} = \frac{\partial}{\partial t} \left(t\mu^*_t\right),
	\qquad
	\nu_{t} = \frac{\partial}{\partial t} 
	\left(t \mu_{\Omega,t}\right)
	\end{equation}
therefore exist for almost every $t$,
as can be proved as in \cite[Theorem 2]{BuRa99},
but in our case the derivatives actually exist for every $t \in (0,1)$.

Both $\rho_t$ and $\nu_t$ are probability measures, with
$\supp(\rho_t) = \supp(\mu^*_t)$ and
$\supp(\nu_t)  = \partial \Omega_t$,
see \eqref{eq:muOmega}. Indeed $\nu_t$ measures 
how the domain $\Omega_t$ grows in the spherical metric
as $t$ increases. 

Applying $\frac{\partial}{\partial t} t$ to the identities \eqref{eq:lemma52formula1}
and \eqref{eq:lemma52formula2} for the Stieltjes 
transforms, and using \eqref{eq:rhotnut}, we get
\begin{align} \label{eq:Fnu}
	 \int \frac{d\nu_t(s)}{z-s} 
	  = \begin{cases} \ds \int \frac{d\rho_t(s)}{z-s}, 
	  & \qquad z \in \mathbb C \setminus \Omega_t, \\[10pt] 
	\ds \frac{z^r}{z^{r+1}+q^{-1}}, & \qquad z \in \Omega_t.
	\end{cases} 
	\end{align}
	
\begin{lemma} \label{lemma56}
	There are constants $C_{1,t}$ and $C_{2,t}$
	such that the following hold.
	\begin{enumerate}
	\item[\rm (a)] We have
		\begin{equation} \label{eq:Unutineq1} 
		U^{\nu_t}(z) \leq U^{\rho_t}(z) + C_{2,t},
		\qquad z \in \mathbb C,
		\end{equation}
		with equality for $z \in \mathbb C \setminus \Omega_t$.
		\item[\rm (b)]  We have
		\begin{align} \label{eq:Unutineq2} 
		U^{\nu_t}(z) \leq - \frac{1}{r+1} \log |z^{r+1} + q^{-1}| + C_{1,t}, 
		\quad  z \in \mathbb C,
		\end{align} 
	with equality for $z  \in \Omega_t$.
		
\end{enumerate}
	\end{lemma}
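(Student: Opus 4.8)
The plan is to read off everything from the two formulas in \eqref{eq:Fnu} for the Cauchy (Stieltjes) transform of $\nu_t$. Recall that $\frac{\partial}{\partial z}U^{\mu}(z) = -\tfrac12\int\frac{d\mu(s)}{z-s}$ and that $\frac{z^r}{z^{r+1}+q^{-1}} = \frac{1}{r+1}\frac{d}{dz}\log(z^{r+1}+q^{-1})$. Hence \eqref{eq:Fnu} says exactly that $U^{\nu_t}-U^{\rho_t}$ is locally constant on the interior of $\mathbb C\setminus\Omega_t$, and that $U^{\nu_t}(z)+\frac{1}{r+1}\log|z^{r+1}+q^{-1}|$ is locally constant on $\Omega_t$. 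I would first record the auxiliary facts needed throughout: $\supp(\nu_t)=\partial\Omega_t$ and $\supp(\rho_t)=\supp(\mu^*_t)\subset\Omega_t$ (the latter because, as in the proof of Theorem~\ref{theorem28}, $\partial U_t$ avoids $\Sigma_1$); $-q^{-1}\notin U_t$ by Theorem~\ref{theorem28}~(a), so the $r+1$ solutions of $z^{r+1}=-q^{-1}$ lie in the interior of $\mathbb C\setminus\Omega_t$; and $U^{\nu_t}$, $U^{\rho_t}$ are continuous on $\mathbb C$. I would also use the standard normalisations $U^{\nu_t}(z)+\log|z|\to0$ and $U^{\rho_t}(z)+\log|z|\to0$ as $z\to\infty$.

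For part (a): in each of the $\BIS$, $\TIS$, $\FIS$ cases $\mathbb C\setminus\Omega_t$ is connected (respectively the unbounded simply connected exterior, a doubly connected domain, a bounded simply connected domain), so the real-valued, locally constant function $U^{\nu_t}-U^{\rho_t}$ equals a single constant $C_{2,t}$ on $\mathbb C\setminus\overline{\Omega_t}$, and by continuity of the potentials also on $\partial\Omega_t$; this is the equality part of \eqref{eq:Unutineq1}. To get the inequality on all of $\mathbb C$, set $\phi = U^{\rho_t}+C_{2,t}-U^{\nu_t}$. On each component of $\Omega_t$ the function $\phi$ is superharmonic ($U^{\rho_t}$ is superharmonic since $\supp(\rho_t)\subset\Omega_t$, and $U^{\nu_t}$ is harmonic there), it vanishes on $\partial\Omega_t$, and on an unbounded component it extends harmonically across $\infty$ with value $C_{2,t}$ by the normalisations above. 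The minimum principle for superharmonic functions then forces $\phi\ge0$ on $\mathbb C$ (and incidentally $C_{2,t}\ge0$), giving \eqref{eq:Unutineq1}.

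Part (b) is the same scheme applied to $\chi(z) = -\frac{1}{r+1}\log|z^{r+1}+q^{-1}| + C_{1,t} - U^{\nu_t}(z)$, where $C_{1,t}$ is the constant value of $U^{\nu_t}(z)+\frac{1}{r+1}\log|z^{r+1}+q^{-1}|$ on $\Omega_t$. Since $-\frac{1}{r+1}\log|z^{r+1}+q^{-1}|$ is superharmonic (equal to $+\infty$ at the solutions of $z^{r+1}=-q^{-1}$, which sit in the interior of $\mathbb C\setminus\Omega_t$) and $U^{\nu_t}$ is harmonic on $\mathbb C\setminus\overline{\Omega_t}$, the function $\chi$ is superharmonic there; it vanishes on $\partial\Omega_t$, and in the $\BIS$ case it extends harmonically across $\infty$ with value $C_{1,t}$. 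The minimum principle gives $\chi\ge0$ on $\mathbb C$, with equality on $\overline{\Omega_t}$, which is \eqref{eq:Unutineq2}; one also reads off $C_{1,t}\ge0$ in the $\BIS$ case.

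The only genuinely delicate point — and the place Lemma~\ref{lemma53} is used — is that $\Omega_t$ is disconnected in the $\TIS$ case (a bounded component $\Omega_t^{b}\ni0$ and an unbounded one), so $C_{1,t}$ is only well defined once one checks that $U^{\nu_t}+\frac{1}{r+1}\log|z^{r+1}+q^{-1}|$ takes the same constant on both components. On the unbounded component, letting $z\to\infty$ and using $U^{\nu_t}(z)+\log|z|\to0$ shows this constant is $0$ (this also pins $C_{1,t}=0$ in the $\FIS$ case). On $\Omega_t^{b}$, evaluating at $z=0$ and differentiating the identity $t\,U^{\mu_{\Omega,t}}(0)=-\frac{1-t}{r+1}\log q$ of Lemma~\ref{lemma53} with respect to $t$ (using $\nu_t=\partial_t(t\mu_{\Omega,t})$) yields $U^{\nu_t}(0)=\frac{1}{r+1}\log q = -\frac{1}{r+1}\log|0^{r+1}+q^{-1}|$, so the constant on $\Omega_t^{b}$ is also $0$. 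Hence $C_{1,t}=0$ in the $\FIS$ and $\TIS$ cases, the two constants agree, and the superharmonic argument goes through; in the $\BIS$ case $\Omega_t$ is connected and no matching is needed. I expect the bookkeeping around these constants, the behaviour at $\infty$ in the unbounded cases, and the mild regularity statements (continuity of $U^{\nu_t}$, differentiation under the integral in $t$) to be the only points needing care; everything else is a routine application of the maximum/minimum principle for (sub/super)harmonic functions.
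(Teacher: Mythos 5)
Your approach is essentially the paper's: read locally constant behaviour of the relevant potentials off the two formulas in \eqref{eq:Fnu}, pin down the constants, and then propagate the inequality by the minimum principle for superharmonic functions, with Lemma~\ref{lemma53} doing the crucial matching of constants across the two components of $\Omega_t$ in the $\TIS$ case of part~(b). The use of Lemma~\ref{lemma53} and the treatment at $\infty$ are exactly what the paper does, so on the whole this is the same argument.

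There is, however, one genuine error in your description of the topology. You assert that $\mathbb C\setminus\Omega_t$ is connected in all three cases, and parenthetically describe it in the $\FIS$ case as ``a bounded simply connected domain''. This is the description of $\mathbb C\setminus U_t$, not of $\mathbb C\setminus\Omega_t$. In the $\FIS$ case $\mathbb C\setminus U_t$ is a bounded simply connected set that does \emph{not} contain the origin (since $0\in U_t$), so its preimage under $z\mapsto z^{r+1}$ consists of $r+1$ pairwise disjoint components, one around each solution of $z^{r+1}=-q^{-1}$ (see Figure~\ref{figureOmega}, and the remark in the proof of Lemma~\ref{lemma53} that ``the complement of $\Omega$ consists of $r+1$ disjoint disks in $\FIS$ case''). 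Consequently the locally constant function $U^{\nu_t}-U^{\rho_t}$ could \emph{a priori} take a different constant value on each of these $r+1$ components, and your argument for part~(a) does not close this gap. The paper fills it by invoking $r+1$\nobreakdash-fold rotational symmetry: $\nu_t$ and $\rho_t$ are invariant under $z\mapsto e^{2\pi i/(r+1)}z$, this rotation permutes the $r+1$ components of $\mathbb C\setminus\Omega_t$, and hence the constant is the same on each. You should add this symmetry argument. (Your $\TIS$ description of $\mathbb C\setminus\Omega_t$ as doubly connected is in fact fine, since the annular region $\mathbb C\setminus U_t$ separates $0$ from $\infty$, so its preimage under $z^{r+1}$ is again a single annular region.) Apart from this one point, the rest of the proposal is correct and matches the paper.
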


\begin{proof}
(a) 
The first identity in \eqref{eq:Fnu} implies that $U^{\nu_t} - U^{\rho_t}$
is constant on each connected component of $\mathbb C \setminus \Omega_t$. Thus for some constant $C_{2,t}$,
\begin{align} \label{eq:Unuteq1}
	U^{\nu_t}(z) =   U^{\rho_{t}}(z) + C_{2,t}, 
	\quad  z \in \mathbb C \setminus \Omega_t,
\end{align}
since $\mathbb C \setminus \Omega_t$ is either connected
(in $\BIS$ and $\TIS$ cases), or consists of $r+1$
disjoint components (in $\FIS$ case) where  due to $r+1$-fold rotational symmetry  the constant is the
same on each component. If $\Omega_t$ is bounded (the $\BIS$ case) then $C_{2,t} =  0$, since both
potentials in \eqref{eq:Unuteq1} behave as
$-\log|z| + o(1)$ as $z \to \infty$. 

Since $\nu_t$ is supported on $\partial \Omega_t$, the function $U^{\rho_t} - U^{\nu_t}$
is superharmonic on the interior of $\Omega_t$,
including at $\infty$ if $\Omega_t$ is unbounded.
By the minimum principle for superharmonic functions we find 
the  corresponding inequality \eqref{eq:Unutineq1} on $\Omega_t$, and part (a) follows.

\medskip

(b) 
For part (b) we argue similarly, but there is an additional
twist when $\Omega_t$ is not connected (the $\TIS$ case).
Using the second identity  of \eqref{eq:Fnu},
we apply similar reasoning to $U^{\nu_t}$
and $- \frac{1}{r+1} \log|z^{r+1} + q^{-1}|$, 
which is the logarithmic potential of the discrete
measure with mass $\frac{1}{r+1}$ at each solution
of $z^{r+1} + q^{-1} = 0$.
We find that 
\[ U^{\nu_t}(z) + \frac{1}{r+1} \log |z^{r+1} + q^{-1}| \]
is constant on each connected component of $\Omega_t$.

In $\BIS$ and $\FIS$ cases we have that $\Omega_t$ is
connected and therefore for some constant
$C_{1,t}$,
\begin{align} \label{eq:Unuteq2}
	U^{\nu_t}(z) & =  - \frac{1}{r+1} \log |z^{r+1} + q^{-1}| + C_{1,t},  \qquad  z \in \Omega_t,	\end{align}
in $\BIS$ and $\FIS$ cases. If $\Omega_t$ is unbounded
then we let $z \to \infty$ in \eqref{eq:Unuteq2}
and we find that $C_{1,t} = 0$ in $\FIS$ case.

In $\TIS$ case we have that $\Omega_t$ has two
connected components. We find that \eqref{eq:Unuteq2}
holds with $C_{1,t} =0$ in the unbounded component
for the same reason that $C_{1,t} = 0$ in $\FIS$ case.
The bounded component could potentially have
a different constant. However we are able to
compute $U^{\nu_t}(z)$ at $z=0$ because
of Lemma \ref{lemma53} which says that
\[ t U^{\mu_{\Omega,t}}(0) = - \frac{1-t}{r+1} \log q \]
in the $\TIS$ case. Then taking the $t$-derivative
and using the definition \eqref{eq:rhotnut} of $\nu_t$,
we obtain
\[ U^{\nu_t}(0) = \frac{1}{r+1} \log q, \]
which implies that \eqref{eq:Unuteq2} with $C_{1,t}=0$
holds for $z=0$ and thus throughout the 
bounded component as well in the $\TIS$ case.

From \eqref{eq:Unuteq2} and the fact that $\nu_t$
is supported on $\partial \Omega_t$, we obtain
the inequality \eqref{eq:Unutineq2} in all cases
(by the minimum principle, as in the proof of part (a))
and part (b) follows.
\end{proof}

\begin{remark}
The identity \eqref{eq:Unuteq1} and the fact that
$\supp(\nu_t) = \partial \Omega_t$ show in fact that
\[ \nu_t = \Bal(\rho_t, \partial \Omega_t). \]
Similarly \eqref{eq:Unuteq2} gives that  
\[ \nu_t = \Bal\left( \frac{1}{r+1} \sum_{z^{r+1} = - q^{-1}} 
\delta_z, \partial \Omega_t\right). \]
Thus $\nu_t$ is a balayage measure onto $\partial \Omega_t$ from two
sides. It is the balayage of $\rho_t$
which is supported inside $\Omega_t$,
and it is  also the balayage of a discrete measure 
supported in the complement on $\Omega_t$.  
\end{remark}

\subsection{Proof of part (c)} \label{subsec56}

\begin{proof}
Integrating the identities \eqref{eq:rhotnut} we obtain
the identities
\begin{align} \label{eq:BuRa}  
	t \mu^*_t = \int_0^t \rho_{s} ds, \quad
	\text{and} \quad
	 t \mu_{\Omega,t} = \int_0^t \nu_s ds, \end{align}
which  are analogous to the formulas of Buyarov and
Rakhmanov \cite{BuRa99} for varying families of
measures on the real line.
We also have
\begin{align} \nonumber
	t \mu_{\Omega,t} & = 
	\lim_{\tau \to 1-} \tau \mu_{\Omega,\tau}
	- \int_t^1 \nu_s ds \\  \label{eq:BuRa2} 
	& = \frac{dA(z)}{\pi (1+|z|^2)^2} 
	- \int_t^1 \nu_s ds.
	\end{align}
We can calculate the logarithmic potential 
\[ - \int_{\mathbb C} \log|z-s|
	 \frac{dA(s)}{\pi (1+|s|^2)} = - \frac{1}{2} \log\left(1+|z|^2\right), \qquad z \in \mathbb C. \]
Therefore by \eqref{eq:BuRa2}
and \eqref{eq:Unutineq2}, we have 
for every $z \in \mathbb C$, 
\begin{align} \nonumber
	 t U^{\mu_{\Omega,t}}(z) & = - \frac{1}{2}
	 \log \left(1+|z|^2\right) - 
	 \int_t^1 U^{\nu_s}(z) ds \\ \nonumber
	 & \geq - \frac{1}{2} \log \left(1+|z|^2\right)	  
	 + \int_t^1 \left(\frac{1}{r+1} \log \left|z^{r+1}+q^{-1}\right| - C_{1,s} \right) ds \\
	 & = - \frac{1}{2} \log \left(1+|z|^2\right)	
	 + \frac{1-t}{r+1} \log\left|z^{r+1}+q^{-1}\right|  + t c_{1,t},  \label{eq:UmuOmegaineq1}
	 \end{align}
with $c_{1,t} = - \ds \frac{1}{t} \int_t^1 C_{1,s} ds$.	 

Equality  holds in \eqref{eq:Unutineq2} for $z \in \Omega_t$ which implies that 
equality holds in \eqref{eq:UmuOmegaineq1}
for
\[  z \in \bigcap_{t \leq s < 1} \Omega_s = \Omega_t, \]
since $\Omega_t \subset \Omega_s$ whenever $t < s$.
The proof of part (c) is complete.  
\end{proof}

\subsection{Proof of part (d)}

\begin{proof}
We obtain for every $z \in \mathbb C$, using 
\eqref{eq:BuRa} and \eqref{eq:Unutineq1},
\begin{align*} 
	t U^{\mu_{\Omega,t}}(z)
	& = \int_0^t U^{\nu_s}(z) ds \\
	& \leq \int_0^t \left( U^{\rho_s}(z) + C_{2,s} \right) ds \\
	& = t U^{\mu^*_t}(z) + t c_{2,t}
	\quad \text{ with } 
	c_{2,t} = \frac{1}{t} \int_0^t C_{2,s} ds.
	\end{align*}
Equality holds, by Lemma \ref{lemma56} (a),  for 
\[ z \in \bigcap_{0 < s \leq t} 
	\left( \mathbb C \setminus \Omega_s  \right) 
	= \mathbb C \setminus \Omega_t, \]
since $\Omega_s \subset \Omega_t$ whenever $s < t$.
\end{proof}


\begin{thebibliography}{99}
	\bibitem{AhSh76}
	D. Aharonov and H.S. Shapiro, 
	Domains on which analytic functions satisfy quadrature
	identities, 
	J. Anal. Math. 30 (1976), 39--73.
	
	\bibitem{ApKu11}
	A.I. Aptekarev and A.B.J. Kuijlaars, Hermite–Pad\'e approximations and multiple orthogonal polynomial ensembles,
	Uspekhi Mat. Nauk. 66 (6) (2011) 123--190; English transl.\ in Russian Math. Surveys 66 (2011)
	1133--1199.
	
	\bibitem{BaBeLeMc15} F. Balogh, M. Bertola,
	S.Y. Lee and K.D.T-R. McLaughlin,
	Strong asymptotics of the orthogonal polynomials with respect to a measure supported on the plane. (English summary)
	Comm. Pure Appl. Math. 68 (2015), no. 1, 112–172. 
	
	\bibitem{BlKu12}
	P.M. Bleher and A.B.J. Kuijlaars,
	Orthogonal polynomials in the normal matrix 
	model with a cubic potential, 
	Adv. Math. 230 (2012), 1272--1321.
	
	\bibitem{BlSi20}
	P.M. Bleher and G.L.F. Silva, 
	The mother body phase transition in the normal matrix model, 
	Mem. Amer. Math. Soc. 265 (2020), no. 1289. 
	
	\bibitem{Bo99} A. Borodin,
	Biorthogonal ensembles, 
	Nucl. Phys. B 536 (1999), 704--732.
	
	\bibitem{BrDrSaWo18} J. Brauchart, P. Dragnev, E.B. Saff, and R. Womersley,
	Logarithmic and Riesz equilibrium for multiple sources on the sphere: the exceptional case,
	in: Contemporary Computational Mathematics (J. Dick,
	F.Y. Kuo, and H. Wozniakowski, eds.),
	Springer, Cham, 2018, pp. 179--203.
	
	\bibitem{BuRa99} V.S. Buyarov and E.A. Rakhmanov,
	Families of equilibrium measures in an external field on the real axis,
	Sb. Math. 190 (1999), 791--802.
	
	\bibitem{ClGiSt19} T. Claeys, M. Girotti, and D. Stivigny, 
	Large gap asymptotics at the hard edge for product random matrices and Muttalib-Borodin ensembles, 
	Int. Math. Res. Not. IMRN 2019, no. 9, 2800--2847.
	
	\bibitem{ClRo14} T. Claeys and S. Romano,
	Biorthogonal ensembles with two-point interactions,
	Nonlinearity 27 (2014), 2419--2444.
	
	\bibitem{CrKu19+} J.G. Criado del Rey and A.B.J. Kuijlaars,
	An equilibrium problem on the sphere
	with two equal charges, preprint  arxiv:1907.04801.
	
	\bibitem{Cr05} 
	D. Crowdy, Quadrature domains and fluid dynamics, in: Quadrature domains and their
	applications, Oper. Theory Adv. Appl., vol. 156, Birkh\"auser, Basel, 2005, 113--129.
	
	\bibitem{CrCl03}
	D. Crowdy and M. Cloke,
	Analytical solutions for distributed multipolar vortex equilibria on a sphere, Phys. Fluids 15 (2003), 22--34.
	
	\bibitem{DeKrMc98} P. Deift, T. Kriecherbauer, and K.T-R. McLaughlin, 
	New results on the equilibrium measure for logarithmic potentials in the presence of an external field, 
	J. Approx. Theory 95 (1998), 388--475. 
	
	\bibitem{Dr97} P.D. Dragnev,
	Constrained energy problems for logarithmic potentials, Ph.D. Thesis, 
	University of South Florida, Tampa, FL, 1997.
	
	
	\bibitem{Dr02} P.D. Dragnev,
	On the separation of logarithmic points on the sphere,
	in: Approximation Theory X (L.L. Schumaker,
	C.K. Chui, and J. St\"ockler, eds.) 
	Vanderbilt Univ. Press, Nashville, TN, 2002, pp. 137--144.
	
	\bibitem{DuKu08} M. Duits and A.B.J. Kuijlaars, An equilibrium problem for the limiting eigenvalue distribution of banded Toeplitz
	matrices, SIAM J. Matrix Anal. Appl. 30 (2008) 173--196. 
	
	\bibitem{ElFe05}
	P. Elbau and G. Felder, 
	Density of eigenvalues of random normal matrices, Comm. Math. Phys. 259 (2005), 433--450.
	
	\bibitem{FoLiZi15}
	P.J. Forrester, D-Z. Liu, and P. Zinn-Justin,
	Equilibrium problems for Raney densities,
	Nonlinearity 28 (2015), 2265--2277. 
		
	\bibitem{FoWa17} P.J. Forrester and D. Wang, Muttalib-Borodin ensembles in random matrix theory-realisations and correlation functions. Electron. J. Probab. 22 (2017), Paper No. 54, 43 pp. 
	

	\bibitem{GoRa81} A.A. Gonchar and E.A. Rakhmanov, 
	On convergence of simultaneous Pad\'e approximants for systems of functions of Markov type, 
	Trudy Mat. Inst. Steklov 157 (1981) 31--48; 
	English transl. in Proc. Steklov Inst. Math. 157 (1983)
	31--50.
	
	\bibitem{GoRa85} A.A. Gonchar and E.A. Rakhmanov, 
	On the equilibrium problem for vector potentials, 
	Uspekhi Mat. Nauk 40 (4)
	(1985) 155--156; English transl.\ Math. Surveys 40 (4) (1985) 183--184.
	
	\bibitem{Gu02}
	B. Gustafsson, Lectures on Balayage,  
	Univ. Joensuu Dept. Math. Rep. Ser. 7,
	Univ. Joensuu, Joensuu, 2004, pp. 17--63.
	
	\bibitem{GuSh05}
	B. Gustafsson and H.S. Shapiro, 
	What is a quadrature domain?,
	in Quadrature Domains and Their Applications,
	 Oper. Theory Adv. Appl., vol. 156, Birkh\"auser, Basel, 2005, pp. 1--25,
	
	\bibitem{GuTeVa14}
	B. Gustafsson, R. Teoderscu, and A. Vasil'ev, Classical and stochastic Laplacian growth, 
	Birkh\"auser Verlag, Basel, 2014.
	
	\bibitem{GuTk11} B. Gustafsson and V.G. Tkachev,
	On the exponential transform of lemniscates,
	Comp. Methods Function Theory 11 (2011), 591--615.
	
	\bibitem{HaKu12} A. Hardy and A.B.J. Kuijlaars,
	Weakly admissible vector equilibrium problems, 
	J. Approx. Theory 164 (2012), 854--868.
	
	\bibitem{Ku10} A.B.J. Kuijlaars, 
	Multiple orthogonal polynomials in random matrix theory, in: Proceedings of the
	International Congress of Mathematicians, Volume III
	(R. Bhatia, ed.) Hyderabad, India, 2010, pp. 1417--1432.
	
	\bibitem{Ku16} A.B.J. Kuijlaars,
	 A vector equilibrium problem for Muttalib-Borodin biorthogonal ensembles, 
	 SIGMA Symmetry Integrability Geom. Methods Appl. 
	 12 (2016), Paper No. 065, 15 pp.
	
	\bibitem{KuDr99} A.B.J. Kuijlaars and P.D. Dragnev,
	Equilibrium problems associated with fast decreasing polynomials, Proc. Amer. Math. Soc. 127 (1999),  1065--1074.
	
	\bibitem{KuLo15}
	A.B.J. Kuijlaars and A. L\'opez Garc\'ia,
	The normal matrix model with a monomial
	potential, a vector equilibrium problem, and
	multiple orthogonal polynomials on a star,
	Nonlinearity 28 (2015), 347--406.
	
	\bibitem{KuMo19} A.B.J. Kuijlaars and L.D. Molag,
	The local universality of Muttalib-Borodin biorthogonal ensembles with parameter $\theta = \frac{1}{2}$, Nonlinearity 32 (2019), 3023--3081.
	
	\bibitem{KuTo15}
	A.B.J. Kuijlaars and A. Tovbis, 
	The supercritical regime in the normal matrix model with cubic potential, 
	Adv. Math. 283 (2015), 530--587.
	
	\bibitem{LeMa16}
	S.Y. Lee and N. Makarov,
	Topology of quadrature domains, 
	J. Amer. Math. Soc. 29 (2016), 333--369. 
	
	\bibitem{LeTeWi09}
	S.Y. Lee, R. Teodorescu, and P. Wiegmann, 
	Shocks and finite-time singularities in Hele-Shaw flow,
	Phys. D 238 (2009), 1113--1128.
	
	\bibitem{LeTeWi10}
	S.Y. Lee, R. Teodorescu, and P. Wiegmann, 
	Weak solution of the Hele-Shaw problem: shocks and
	viscous fingering, JETP Lett. 92 (2010), 91--96.
	
	\bibitem{LeTeWi11}
	S.Y. Lee, R. Teodorescu, and P. Wiegmann, 
	Viscous shocks in Hele-Shaw flow and Stokes
	phenomena of the Painlev\'e I 
	transcendent, Phys.~D 240 (2011), 1080--1091.
		
	\bibitem{LeWa17}
	S.Y. Lee and M. Yang,	
	Discontinuity in the asymptotic behavior of planar orthogonal polynomials under a perturbation of the Gaussian weight,
	Comm. Math. Phys. 355 (2017), 303--338. 
	
	\bibitem{LeDr19+} A.R. Legg and P.D. Dragnev,
	Logarithmic equilibrium on the sphere in the
	presence of multiple point charges, 
	preprint arXiv:1912.11013.
	
	\bibitem{Mo20+} L.D Molag, 
	The local universality of Muttalib--Borodin ensembles when the parameter $\theta$ is the reciprocal of an integer, preprint arXiv:2003.11299.
	
	\bibitem{Mu95} K.A. Muttalib,
	Random matrix models with additional interactions,
	J. Phys. A: Math. Gen. 28 (1995), L159--64.
	
	\bibitem{NiSo91} E.M. Nikishin and V.N. Sorokin,
	Rational Approximations and Orthogonality, 
	Amer. Math. Soc., Providence, RI, 1991.
	
	\bibitem{OrSaWi19}
	R. Orive, J.F. S\'anchez Lara, and F. Wielonsky, Equilibrium problems in weakly admissible external
	fields created by point charges, 
	J. Approx. Theory 244 (2019), 71--100.
	
	\bibitem{Ra95} T. Ransford,
	Potential Theory in the Complex Plane,
	Cambridge Univ. Press, Cambridge, 1995.
	
	\bibitem{SaTo97} E.B. Saff and V. Totik,
	Logarithmic Potentials with External Fields,
	Springer-Verlag, Berlin, 1997.
	
	\bibitem{Sc14}  W. Schlag, 
	A Course in Complex Analysis and Riemann Surfaces,
	Amer. Math. Society, Providence, RI, 2014.
	
	\bibitem{Si05} P. Simeonov,
	A weighted energy problem for a class of admissible weights, Houston J. Math. 31 (2005), 1245--1260.
	
	\bibitem{TeBeAgZaWi05}
	R. Teodorescu, E. Bettelheim, O. Agam, A. Zabrodin, and P. Wiegmann, 
	Normal random matrix ensemble as a growth problem. Nuclear Phys. B 704 (2005), 407--444.

\end{thebibliography}
\end{document}